\title{Normality of monodromy group in generic convolution group}
\date{\today}
\author{Haohao \textsc{Liu}\thanks{email: \href{mailto:kyung@mail.ustc.edu.cn}{kyung@mail.ustc.edu.cn}, IRMA, Université de Strasbourg, 7 rue René-Descartes, 67084 Strasbourg Cedex, France}\,\orcidlink{0009-0007-5942-8174}}
\newcounter{relctr} %
\everydisplay\expandafter{\the\everydisplay\setcounter{relctr}{0}} %
\newcommand\labelrel[2]{%
	\begingroup
	\refstepcounter{relctr}%
	\stackrel{\textnormal{(\alph{relctr})}}{\mathstrut{#1}}%
	\originallabel{#2}%
	\endgroup
}
\newcommand\setItemnumber[1]{\setcounter{enum\romannumeral\@enumdepth}{\numexpr#1-1\relax}}
\newtheorem{thm}{Theorem}[subsection]
\newtheorem*{thm*}{Theorem}
\newtheorem{lm}[thm]{Lemma}
\newtheorem*{cor*}{Corollary}
\newtheorem{ft}[thm]{Fact}
\theoremstyle{remark}
\newtheorem{rk}[thm]{Remark}
\newtheorem{claim}[thm]{Claim}
\theoremstyle{definition}
\newtheorem{df}[thm]{Definition}
\newtheorem{eg}[thm]{Example}
\newtheorem{set}[thm]{Setting}
\def\<{\langle}
\def\>{\rangle}
\def\ab{\mathrm{ab}}
\def\an{\mathrm{an}}
\DeclareMathOperator{\Aut}{Aut}
\def\bV{\mathbb{V}}
\def\C{\mathbb{C}}
\def\cA{\mathcal{A}}
\def\cB{\mathcal{B}}
\def\cC{\mathcal{C}}
\def\cE{\mathcal{E}}
\def\cH{\mathcal{H}}
\def\cM{\mathcal{M}}
\DeclareMathOperator\Cons{Cons}
\def\cP{\mathcal{P}}
\def\cQ{\mathcal{Q}}
\def\cR{\mathcal{R}}
\def\cS{\mathcal{S}}
\def\D{\mathbb{D}}
\def\G{\mathbb{G}}
\def\GL{\mathrm{GL}}
\def\Id{\mathrm{Id}}
\def\iso{\xrightarrow{\sim}}
\DeclareMathOperator\Loc{Loc}
\DeclareMathOperator\Mon{Mon}
\DeclareMathOperator\MT{MT}
\def\N{\mathbb{N}}
\def\op{\mathrm{op}}
\def\Perv{\mathrm{Perv}}
\def\pet{\pi_1}
\def\Q{\mathbb{Q}}
\def\red{\mathrm{red}}
\DeclareMathOperator{\Rep}{Rep}
\DeclareMathOperator{\RHom}{R\cH om}
\def\sh{\mathrm{sh}}
\DeclareMathOperator{\Spec}{Spec}
\DeclareMathOperator{\Supp}{Supp}
\def\ULA{\mathrm{ULA}}
\def\Vec{\mathrm{Vec}}
\def\Z{\mathbb{Z}}
\begin{document}
	\maketitle
	\begin{abstract}On an abelian variety $A$, sheaf convolution  gives a Tannakian formalism for perverse sheaves. Let $X$ be an irreducible algebraic variety with generic point $\eta$. Let $K$ be a family of perverse sheaves (more precisely, a  relative perverse sheaf) on the constant abelian scheme $p_X:A\times X\to X$. We show that for uncountably many character sheaves $L_{\chi}$ on $A$, the monodromy groups of  $R^0p_{X*}(K\otimes p_A^*L_{\chi})$ are normal in the Tannakian group $G(K|_{A_{\eta}})$ of the perverse sheaf $K|_{A_{\eta}}\in\Perv(A_{\eta})$. 
		
		This result is inspired from and could be compared to two other normality results: In the same setting,  the Tannakian group $G(K|_{A_{\bar{\eta}}})$ is  normal  in $G(K|_{A_{\eta}})$ (due to Lawrence-Sawin). For a polarizable variation of Hodge structures, outside a meager locus, the connected monodromy group is normal in the derived Mumford-Tate group (due to André).
	\end{abstract}
	
	\section{Introduction}
\subsection{Background}
Constructing  local systems (or $\ell$-adic lisse sheaves)  with a prescribed monodromy group is an important problem having a long history.

 In positive characteristics, Katz and his collaborators exhibit  local systems whose monodromy groups are the simple algebraic group  $G_2$ (\cite[11.8]{katz88gauss}), $2.J_2$ (\cite{katz2019J2}), the finite symplectic groups (\cite{katz2019rigid}), the special unitary groups (\cite{katz2019local}), \textit{etc}. In particular, the exceptional Lie groups  appear unexpectedly in algebraic geometry.

In characteristic zero, such constructions help to understand Galois groups of number fields. Dettweiler and  Reiter \cite{dettweiler2010rigid} prove the existence of a local system on $\mathbf{P}^1_{\Q}\setminus\{0,1,\infty\}$ whose monodromy group is $G_2$. It  produces a motivic Galois representations with image dense in $G_2$. Their proof relies on  Katz's middle convolution of perverse sheaves. Yun \cite{yun2014motives}  constructs local systems with some other exceptional groups as monodromy groups. As applications, he answers a long standing question of Serre, and solves new cases of the inverse Galois problem.  His construction uses the geometric Langlands correspondence.
 
A new proof of Mordell's conjecture \cite{lawrence2020diophantine}, and its subsequent generalization to higher dimensional varieties over number fields, rely on the existence of local systems with big monodromy  over the variety in question.  Lawrence and Sawin \cite{lawrence2020shafarevich} use this  technique  to prove Shafarevich's conjecture for hypersurfaces in abelian varieties. Krämer and Maculan \cite{kramer2023arithmetic} apply roughly the same strategy to obtain an arithmetic finiteness result for very irregular varieties of dimension less than half the dimension of their Albanese variety. In both cases, the construction of   local systems  uses perverse sheaves. \subsection{Convolution groups and monodromy groups}\label{sec:monoVSconv}
In \cite{lawrence2020shafarevich}, the construction of local systems rests on comparing the monodromy group with the Tannakian group from Krämer-Weissauer's  convolution theory \cite{kramer2015vanish}.   As \cite[p.4]{javanpeykar2023monodromy} comments, this comparison is similar to the study of monodromy groups \emph{via} Mumford-Tate groups in \cite{andre1992mumford}.   

We briefly outline the argument of \cite{lawrence2020shafarevich}. On an abelian variety $A$,  a quotient of the abelian category $\Perv(A)$ of perverse sheaves on $A$ is a Tannakian category under sheaf convolution. Let $X$ be a normal irreducible algebraic variety with generic point $\eta$. Let $K$ be a universally locally acyclic, relative perverse sheaf on the constant abelian scheme $p_X:A\times X\to X$. Intuitively, $K\in D_c^b(A\times X)$ is a family of perverse shaves on $A$ parameterized by $X$. For a character sheaf $L_{\chi}$ on $A$,  $L^0(K,\chi):=R^0p_{X*}(K\otimes p_A^*L_{\chi})$ is a lisse sheaf on $X$ (Remark \ref{rk:monogp}). Let $\rho_{K,\chi}:\pi_1(X,\bar{\eta})\to \GL\left(H^0(A_{\bar{\eta}},K|_{A_{\eta}}\otimes L_{\chi})\right)$ be the corresponding monodromy representation. Let $\Mon(K,\chi)$ be the Zariski closure of the image of $\rho_{K,\chi}$.

 By \cite[Lemma~2.8]{lawrence2020shafarevich} (see also \cite[Thm.~4.3]{javanpeykar2023monodromy}), the Tannakian group $G(K|_{A_{\bar{\eta}}})$ of $K|_{A_{\bar{\eta}}}\in \Perv(A_{\bar{\eta}})$ is  normal in the  Tannakian group $G(K|_{A_{\eta}})$ of $K|_{A_{\eta}}\in \Perv(A_{\eta})$. This normality is used in \cite[Theorem 4.7]{lawrence2020shafarevich} to prove that, roughly speaking, if $K$ is associated with a family of hypersurfaces, then for most character sheaves $L{_\chi}$ on $A$, the  monodromy groups $\Mon(K,\chi)$ contain $G(K|_{A_{\bar{\eta}}})$.   For such $L_{\chi}$, one can apply  Lawrence-Venkatesh's machinery to the lisse sheaves $L^0(K,\chi)$.
\subsection{Statements}
Our main result bears formal analogy with  André's normality theorem, which we recall. The category of rational Hodge structures is Tannakian. For a rational Hodge structure $V$, the Tannakian group of the  Tannakian subcategory $\<V\>$ generated by $V$ is called the \emph{Mumford-Tate group} of $V$ and denoted by $\MT(V)$. Let $X$ be a complex smooth quasi-projective variety. Let $\bV$ be a  polarizable variation of rational Hodge structure on $X(\C)$. For every $x\in X(\C)$, let $\rho:\pi_1(X(\C),x)\to \GL(\bV_x)$ be  the monodromy representation.
The identity component $H_x$ of the Zariski closure $\overline{\mathrm{Im}(\rho)}^{\mathrm{Zar}}$ of the image of $\rho$ is called the \emph{connected monodromy group}.  \begin{ft}[{\cite[Thm.~1]{andre1992mumford}, see also \cite[Prop.~15.3.9]{carlson2017period}}]\label{ft:Andrenormal}There is a countable union $Z$ of strict Zariski closed subsets of $X$, such that for every $x\in X(\C)\setminus Z$,  $H_x$ is a normal subgroup of  $\MT(\bV_x)$.
\end{ft}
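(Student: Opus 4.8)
\emph{Sketch of proof.} The plan is to take $Z$ to be a Hodge locus, pass to a Hodge-generic base point $o$ where $\MT(\bV_o)$ becomes the generic Mumford--Tate group $M$, and then extract the normality $H_o\triangleleft M$ from Deligne's theorem of the fixed part applied to the endomorphism variation $\mathcal{E}nd(\bV)$.

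First I would work one connected component of $X$ at a time, so assume $X$ connected, and take $Z$ to be the Hodge locus of $\bV$ --- the set of $x$ for which $\MT(\bV_x)$ is strictly smaller than the generic Mumford--Tate group $M$. This is a countable union of strict closed subvarieties, by the theorem of Cattani--Deligne--Kaplan on algebraicity of Hodge loci. Off $Z$ one has $\MT(\bV_x)=M$ up to conjugacy by parallel transport, and since the groups $H_x$ for $x$ in a connected base are pairwise conjugate, it suffices to prove $H\triangleleft M$ for one Hodge-generic point $o$, where $H:=H_o$. Passing to a connected finite étale cover $X'\to X$ changes neither $M$ nor the identity component $H$ and pulls $Z$ back to the Hodge locus of $\bV|_{X'}$, so I may further arrange that $\Gamma:=\rho(\pi_1(X,o))$ is Zariski-dense in the connected group $H$; then $\mathfrak h:=\operatorname{Lie}(H)\subseteq\operatorname{End}(\bV_o)$ is $\operatorname{Ad}(\Gamma)$-stable (since $\Gamma$ normalises $H$), hence underlies a $\Q$-sub-local system $\underline{\mathfrak h}\subseteq\mathcal{E}nd(\bV)$.

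The core of the argument uses two consequences of the theorem of the fixed part (Deligne for the classical case $\bV=R^kf_*\Q$, and André in general) for the polarizable variation $\mathcal{E}nd(\bV)$, whose fibre at $o$ is $\operatorname{End}(\bV_o)$ with its weight-$0$ Hodge structure and the conjugation action of $\GL(\bV_o)\supseteq M$: (i) any tensor of $\bV_o$ that is a Hodge class at the Hodge-generic point $o$ remains a Hodge class on a dense Zariski-open set, hence is a flat, in particular $\Gamma$-invariant, section; therefore $H$ fixes every Hodge class in every tensor construction on $\bV_o$, so $H\subseteq M$ and $\mathfrak h\subseteq\mathfrak m:=\operatorname{Lie}(M)$; (ii) $\underline{\mathfrak h}$ is in fact a sub-variation of Hodge structure, i.e.\ $\mathfrak h$ underlies a sub-Hodge structure of $\operatorname{End}(\bV_o)$. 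Granting these, I would finish as follows. Since $o$ is Hodge-generic, $M=\MT(\bV_o)$, so by the Tannakian description of the Mumford--Tate group the sub-Hodge structures of any tensor construction on $\bV_o$ are precisely the $M$-subrepresentations; by (ii), $\mathfrak h$ is $\operatorname{Ad}(M)$-stable, and with (i) it is an ideal of $\mathfrak m$ normalised by $M$. As $M$ is connected, for every $g\in M$ the subgroup $gHg^{-1}$ is connected with Lie algebra $\operatorname{Ad}(g)\mathfrak h=\mathfrak h$, hence equals $H$; that is, $H\triangleleft M$, which is the desired normality off $Z$. (One recovers the finer statement $H\triangleleft M^{\mathrm{der}}$: $H$ is semisimple by Deligne, so $\mathfrak h\subseteq[\mathfrak m,\mathfrak m]$; conversely $H\triangleleft M^{\mathrm{der}}$ implies $H\triangleleft M$ because $M$ is reductive and $Z(M)^0$ centralises $H$.)

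The only non-formal point, and the step I expect to be the main obstacle, is (ii): over a quasi-projective base the fact that $\mathfrak h$ carries a Hodge structure is genuinely deep. The route I would take is to compactify $X$ to a smooth $\bar X$ with normal-crossing boundary, invoke the monodromy theorem so that $\Gamma$ is generated up to finite index by the local monodromies around the boundary divisors, whence $\mathfrak h$ is spanned by the logarithms $N_i$ of their unipotent parts; by Schmid's nilpotent-orbit and $\SL_2$-orbit theorems and by Cattani--Kaplan--Schmid, the $N_i$ are morphisms of the relevant limit mixed Hodge structures of type $(-1,-1)$, and a semisimplicity-and-descent argument --- splitting $\mathcal{E}nd(\bV)$ into its flat part and the sum of its non-constant simple sub-variations and checking that $\underline{\mathfrak h}$ sits in the latter as a sub-variation --- upgrades this into the assertion that $\underline{\mathfrak h}$ is a sub-variation of Hodge structure. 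Everything else is organisational bookkeeping or elementary representation theory.
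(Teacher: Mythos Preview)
The paper does not give a proof of this statement: it is recorded as a cited Fact (André, with a textbook reference to Carlson--Müller-Stach--Peters), and the only remark the paper makes on the argument is the sentence after Theorem~\ref{thm:mainnormal} that ``the normality is a consequence of the theorem of the fixed part due to Griffiths--Schmid--Steenbrink--Zucker.'' Your sketch is a correct outline of André's argument and is consistent with that remark.

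It is still worth comparing your route with the template the paper adopts for its own main result. You deduce normality from the single assertion that $\mathfrak h=\operatorname{Lie}(H)$ is a sub-Hodge structure of $\operatorname{End}(\bV_o)$ (your step (ii)); as you rightly flag, this is the deep input, and your last paragraph is a fair sketch of how André obtains it from Schmid's asymptotic theory. The paper's analogue (Theorem~\ref{thm:normal}) instead runs through the abstract normality criterion of Lemma~\ref{lm:normalcriterion}: a closed reductive subgroup $H\le G$ with $W^H$ $G$-stable for every $G$-representation $W$ is normal. Transcribed to the Hodge setting, the theorem of the fixed part gives that $T(\bV_o)^{\Gamma}$ is a sub-Hodge structure for every tensor construction $T$, hence an $M$-subrepresentation at the Hodge-generic point; after your finite-cover reduction $T(\bV_o)^{\Gamma}=T(\bV_o)^{H}$, and Deligne's semisimplicity supplies the reductivity of $H$, so Lemma~\ref{lm:normalcriterion} applies. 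Both arguments ultimately rest on the theorem of the fixed part over a quasi-projective base, so neither bypasses the analytic input; your approach is closer to André's original paper and delivers the sharper $H\trianglelefteq M^{\mathrm{der}}$, while the invariants route is precisely what the present paper transports to the sheaf-convolution setting.
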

In the main result, Theorem \ref{thm:mainnormal}, we prove that for a semisimple, relative perverse sheaf $K$, its generic convolution group $G(K|_{A_{\eta}})$  is reductive. Moreover, for many characters $\chi:\pi_1(A)\to \bar{\Q}_{\ell}^{\times}$, the monodromy group $\Mon(K,\chi)$ is a normal subgroup of $G(K|_{A_{\eta}})$. This normality puts further restriction on the monodromy group. Using Krämer's method \cite[Thm.~6.2.1]{kramer2022characteristic1}, Lawrence and Sawin \cite[Lemma 3.9]{lawrence2020shafarevich} even show that the geometric generic convolution group $G(K|_{A_{\bar\eta}})$ modulo center is simple.

\begin{set}\label{set:AX}Let $k$ be an algebraically closed field of characteristic zero. Let $X$ be an integral algebraic variety over $k$ with generic point $\eta$. Let $A$ be an abelian variety over $k$. Denote by $p_X:A\times X\to X$ and $p_A:A\times X\to A$ the projections.\end{set} Let $\ell$ be a prime number. Let $\bar{\Q}_{\ell}$ be an algebraic closure of $\Q_{\ell}$.  Let $D_c^b(A\times X)$ be the triangulated category  of bounded constructible $\bar{\Q}_{\ell}$-sheaves on $A\times X$. 
Let $\pet(A)$ be the étale fundamental group of $A$ based at the geometric origin point.    Fix a relative perverse sheaves  $K$ for $p_X:A\times X\to X$ in the sense of Definition \ref{df:relperv}. Assume  that $K$ is a \emph{semisimple} object of $D_c^b(A\times X)$ (in the sense of Definition \ref{df:ssDcb}).  Let $G_{\omega_{\chi}}(K|_{A_{\eta}})$ be the Tannakian monodromy group (Definition \ref{df:Tannakianmonodromy}) of $K|_{A_{\eta}}$, referred to as the \emph{generic convolution group}.

\begin{thm}[Lemma \ref{lm:monreductive}, Theorem \ref{thm:normal}]\label{thm:mainnormal}
Assume $\dim A>0$. Then there are uncountably many characters $\chi:\pet(A)\to \bar{\Q}_{\ell}^{\times}$, such that  $G_{\omega_{\chi}}(K|_{A_{\eta}})$ is a well-defined reductive group. It contains  $\Mon(K,\chi)$ as a  closed, reductive, \emph{normal} subgroup.
\end{thm}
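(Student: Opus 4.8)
The plan is to treat the two assertions separately: the reductivity of $G_{\omega_\chi}(K|_{A_\eta})$ is essentially formal once the right $\chi$ are isolated, while the normality of $\Mon(K,\chi)$ is the substantial point and is modelled on André's argument via the theorem of the fixed part. First I would fix the class of admissible characters. Since $\dim A>0$, the set $\Hom(\pet(A),\bar\Q_\ell^\times)$ is uncountable, and it is the set of $\bar\Q_\ell$-points of a positive-dimensional group variety (a product of tori with a finite group); on it, for each perverse sheaf $P$ on $A$ the condition $H^i(A,P\otimes L_\chi)=0$ for $i\neq 0$ cuts out a proper closed subset, by Krämer--Weissauer's generic vanishing theorem \cite{kramer2015vanish}. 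The convolution category $\langle K|_{A_\eta}\rangle$ has only countably many isomorphism classes of objects, each spreading out to a relative perverse sheaf over a dense open of $X$; imposing on $\chi$ that $\omega_\chi$ be exact on all of them and that the relative Künneth isomorphism $R^0p_{X*}((P*_XQ)\otimes p_A^*L_\chi)\simeq R^0p_{X*}(P\otimes p_A^*L_\chi)\otimes R^0p_{X*}(Q\otimes p_A^*L_\chi)$ hold removes only a countable union of proper closed subsets, which over the uncountable field $\bar\Q_\ell$ cannot exhaust a positive-dimensional group variety; so uncountably many $\chi$ survive, and for each of them $\omega_\chi$ is a fibre functor on $\langle K|_{A_\eta}\rangle$, whence $G_{\omega_\chi}(K|_{A_\eta})$ is well defined. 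Semisimplicity of $K$ in $D_c^b(A\times X)$ forces $K|_{A_\eta}$ to be a semisimple perverse sheaf on $A_\eta$ (reduce to a simple relative perverse sheaf and restrict to the generic fibre, obtaining a shift of an intersection complex, semisimple in characteristic zero); convolution of semisimple perverse sheaves is semisimple, being $Ra_*$ of an external product along the proper map $a\colon A_\eta\times A_\eta\to A_\eta$ — the decomposition theorem — and semisimplicity is preserved by the Tannakian quotient modulo negligible objects. Hence $\langle K|_{A_\eta}\rangle$ is a semisimple, finitely generated Tannakian category and $G_{\omega_\chi}(K|_{A_\eta})$ is reductive.

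Next I would realize $\Mon(K,\chi)$ inside $G_{\omega_\chi}(K|_{A_\eta})$ by refining $\omega_\chi$ to a fibre functor valued in $\Rep(\pet(X,\bar\eta))$: send $P\in\langle K|_{A_\eta}\rangle$ to the local system $R^0p_{X*}(\widetilde P\otimes p_A^*L_\chi)$ attached to a spreading-out $\widetilde P$ over a dense open, whose stalk at $\bar\eta$ is $\omega_\chi(P)=H^0(A_{\bar\eta},P\otimes L_\chi)$; the relative Künneth formula makes this assignment monoidal and a standard spreading-out argument makes it independent of choices. Tannakian duality then produces a homomorphism $\pet(X,\bar\eta)\to G_{\omega_\chi}(K|_{A_\eta})(\bar\Q_\ell)$ whose composite with the tautological (faithful) representation on $\omega_\chi(K|_{A_\eta})$ is the monodromy representation $\rho_{K,\chi}$; therefore $\Mon(K,\chi)$, the Zariski closure of the image, embeds as a closed subgroup of $G_{\omega_\chi}(K|_{A_\eta})$, and the induced tensor functor $\langle K|_{A_\eta}\rangle\to\langle L^0(K,\chi)\rangle=\Rep(\Mon(K,\chi))$ carries $K|_{A_\eta}$ to $L^0(K,\chi)$.

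For normality I would bring in the Lawrence--Sawin normality $G(K|_{A_{\bar\eta}})\trianglelefteq G(K|_{A_\eta})$ \cite[Lemma~2.8]{lawrence2020shafarevich}, whose quotient $\Pi$ is the Zariski closure of the image of $\mathrm{Gal}(\overline{k(X)}/k(X))$ acting on $\omega_\chi$, and for which $G(K|_{A_\eta})$ is topologically generated by $G(K|_{A_{\bar\eta}})$ together with that Galois image. Because $K$ is defined over all of $X$, this Galois action on $\omega_\chi(P)$ for every $P\in\langle K|_{A_\eta}\rangle$ is unramified over $X$, i.e. it factors through $\pet(X,\bar\eta)$ and hence through $\Mon(K,\chi)$; consequently $\Mon(K,\chi)$ surjects onto $\Pi$ and $G(K|_{A_\eta})=G(K|_{A_{\bar\eta}})\cdot\Mon(K,\chi)$. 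Thus the whole problem reduces to showing that $G(K|_{A_{\bar\eta}})$ normalizes $\Mon(K,\chi)$, equivalently that $\mathrm{Lie}\,\Mon(K,\chi)$ is an ideal of $\mathrm{Lie}\,G(K|_{A_\eta})$.

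This last step is the convolution-theoretic analogue of André's use of the theorem of the fixed part, and it is the step I expect to be the main obstacle. The point is to show that $\mathrm{Lie}\,\Mon(K,\chi)$ — a priori only a subobject of the Lie-algebra (endomorphism) object in $\Rep(\Mon(K,\chi))=\langle L^0(K,\chi)\rangle$ — already descends to a subobject of that object inside $\langle K|_{A_\eta}\rangle$; granting this, it becomes a subrepresentation of the adjoint representation of $G_{\omega_\chi}(K|_{A_\eta})$, so the whole group, and in particular $G(K|_{A_{\bar\eta}})$, stabilizes it. The descent I would attempt runs as follows: $\mathrm{Lie}\,\Mon(K,\chi)$ is a $\pet(X)$-stable summand of the semisimple local system obtained by applying $R^0p_{X*}(-\otimes p_A^*L_\chi)$ to a spreading-out of the Lie-algebra object; exactness of this functor on relative perverse sheaves (for our $\chi$) together with the decomposition theorem for the proper map $p_X$ gives a decomposition of that object into relative perverse summands, but the functor may split one simple relative perverse sheaf into several local-system summands — the arithmetic-versus-geometric phenomenon — so the monodromy summand need not be visible downstairs without further input. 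The extra input I would use is that this summand is the one \emph{generated by} $\pet(X)$, combined with the decomposition $G(K|_{A_\eta})=G(K|_{A_{\bar\eta}})\cdot\Mon(K,\chi)$ and with Lawrence--Sawin's finer result \cite[Lemma~3.9]{lawrence2020shafarevich} that $G(K|_{A_{\bar\eta}})$ modulo its centre is simple: the ideals of $\mathrm{Lie}\,G(K|_{A_\eta})$ are then assembled only from this simple factor, its centralizer, and the centre, which together with $\pet(X)$-stability should pin $\mathrm{Lie}\,\Mon(K,\chi)$ down as such an ideal. Once normality is in hand, the unipotent radical of $\Mon(K,\chi)$ is characteristic in it, hence normal in the reductive group $G_{\omega_\chi}(K|_{A_\eta})$, hence trivial, so $\Mon(K,\chi)$ is reductive; it is closed by the second paragraph, which completes the plan.
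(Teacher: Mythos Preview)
Your first two paragraphs are essentially right and match the paper's setup: you correctly identify that the good $\chi$ are obtained by removing countably many strict closed subsets of the character scheme, that reductivity of $G_{\omega_\chi}(K|_{A_\eta})$ follows from semisimplicity of $K|_{A_\eta}$, and that $\Mon(K,\chi)$ sits as a closed subgroup via the enriched fibre functor valued in $\Rep_{\bar\Q_\ell}(\Gamma_{k(\eta)})$.

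The gap is in your normality argument. Your reduction $G(K|_{A_\eta})=G(K|_{A_{\bar\eta}})\cdot\Mon(K,\chi)$ is fine, but your proposed endgame relies on \cite[Lemma~3.9]{lawrence2020shafarevich}, namely that $G(K|_{A_{\bar\eta}})$ modulo its centre is simple. That result is \emph{not} available for an arbitrary semisimple relative perverse sheaf $K$; it is proved only for the specific $K$ arising from families of hypersurfaces (via Krämer's characteristic-class method). For general $K$ the group $G(K|_{A_{\bar\eta}})$ can be any reductive group, so you cannot pin down the ideals of $\mathrm{Lie}\,G(K|_{A_\eta})$ this way. Without that input your descent of $\mathrm{Lie}\,\Mon(K,\chi)$ has no leverage, and you yourself flag the step as the main obstacle.

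The paper avoids this entirely by proving the analogue of the theorem of the fixed part that you allude to but do not implement. Concretely: for every object $P$ of $\langle K|_{A_\eta}\rangle$, one lifts $P$ to a semisimple $L\in\Perv^{\ULA}(A\times X/X)$ and uses the \emph{other} projection $p_A:A\times X\to A$ together with the global invariant cycle theorem to produce a subobject $L^0\subset L$ with $\omega_{\chi_\eta}(L^0|_{A_\eta})=\omega_{\chi_\eta}(L|_{A_\eta})^{\Gamma_{k(\eta)}}$ for all $\chi$ outside a strict closed subset of the cotorus. This shows that $V^{\Mon(K,\chi)}$ is $G_{\omega_\chi}(K|_{A_\eta})$-stable for \emph{every} representation $V$, and then the observability-based normality criterion (if $V^H$ is $G$-stable for all $V$ and $H$ is reductive, then $H\trianglelefteq G$) finishes. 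Note that this criterion requires reductivity of $\Mon(K,\chi)$ as an \emph{input}, so it must be proved directly first (decomposition theorem applied to $Rp_{X*}(K\otimes p_A^*L_\chi)$ gives a semisimple local system), rather than deduced from normality as you propose at the end.
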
\begin{rk}\begin{enumerate}\item In fact, Gabber and Loeser \cite{gabber1996faisceaux} introduce a scheme  $\cC(A)_{\ell}$ (reviewed in Definition \ref{df:cotorus}) to parameterize  pro-$\ell$  characters of $\pet(A)$. As in Fact \ref{ft:Andrenormal}, what we exclude in Theorem \ref{thm:mainnormal} is also a countable union of strict Zariski closed subsets of this scheme.
	\item By \cite[Cor.~13]{peters2003monodromy}, the category of polarizable variation of rational Hodge structure on $X(\C)$ in Fact \ref{ft:Andrenormal} is semisimple. In this sense, the semisimplicity hypothesis in Theorem \ref{thm:mainnormal} should be compared to the polarizability assumption in Fact \ref{ft:Andrenormal}. As Remark \ref{rk:fromsubvar} explains, if $K$ is associated with a family of closed subvarieties of $A$, then it is semisimple in $D_c^b(A\times X)$.
\item In Fact \ref{ft:Andrenormal},   the connected monodromy group $H_x$ is independent of the choice of $x$ up to isomorphism. By contrast, in Theorem \ref{thm:mainnormal}, the Tannakian group $G_{\omega_{\chi}}(K|_{A_{\eta}})$ is independent of the choice of character $\chi$ up to isomorphism. \end{enumerate}  \end{rk}
The line of the proof of Theorem \ref{thm:mainnormal} is  similar to that of  Fact \ref{ft:Andrenormal}.  As André \cite[p.10]{andre1992mumford} explains, the normality  is a consequence of the theorem of the fixed part due to Griffiths-Schmidt-Steenbrink-Zucker.  In our case, an analog of the theorem of the fixed part is Theorem \ref{thm:introfixed}.

 For a character $\chi_{\ell'}$ of $\pi_1(A)$ of finite order prime to $\ell$, and a pro-$\ell$ character $\chi_{\ell}\in \cC(A)_{\ell}$,  set $\chi=\chi_{\ell'}\chi_{\ell}$. Let $\Perv^{\ULA}(A\times X/X)\subset D_c^b(A\times X)$ be the  full subcategory  of $p_X$-universally locally acyclic (ULA, reviewed in Definition \ref{df:ULA}) relative perverse sheaves. Then $\Perv^{\ULA}(A\times X/X)$ is an abelian category. 
\begin{thm}[Theorem \ref{thm:fixedpart}]\label{thm:introfixed} Assume that $X$ is smooth. Let $K\in \Perv^{\ULA}(A\times X/X)$ be semisimple in $D_c^b(A\times X)$. Then there is a subobject $K^0\subset K$ in $\Perv^{\ULA}(A\times X/X)$ with the following property: For every character $\chi_{\ell'}:\pet(A)\to\bar{\Q}_{\ell}^{\times}$ of finite order prime to $\ell$, there is a nonempty Zariski open subset $U\subset \cC(A)_{\ell}$,  such that for every $\chi_{\ell}\in U$, one has \[H^0(A_{\bar{\eta}},K^0|_{A_{\bar{\eta}}}\otimes^LL_{\chi})=H^0(A_{\bar{\eta}},K|_{A_{\bar{\eta}}}\otimes^LL_{\chi})^{\Gamma_{k(\eta)}}.\]
\end{thm}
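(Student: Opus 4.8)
\textbf{Proof proposal for Theorem \ref{thm:introfixed}.}

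The plan is to mimic the classical construction of the fixed part: the subobject $K^0 \subset K$ should be the largest ULA relative perverse subobject of $K$ whose formation is ``constant along $X$'', i.e.\ which is pulled back from $A$ (from the geometric generic fibre $A_{\bar\eta}$). Concretely, $K|_{A_{\bar\eta}} \in \Perv(A_{\bar\eta})$ carries an action of $\Gamma_{k(\eta)} = \mathrm{Gal}(\overline{k(\eta)}/k(\eta))$ by transport of structure (since $K$ descends to $A\times X$ and hence to $A_\eta$), and one expects a maximal $\Gamma_{k(\eta)}$-invariant perverse subobject of $K|_{A_{\bar\eta}}$, which by a descent/spreading-out argument extends to a ULA relative perverse subobject $K^0 \subset K$ over a neighbourhood and then, using that $X$ is integral and $K^0$ is determined on the generic fibre, over all of $X$. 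The key input is that $\Perv^{\ULA}(A\times X/X)$ is abelian with a well-behaved restriction to fibres, so that ``maximal invariant subobject'' makes sense and is stable under the needed operations; semisimplicity of $K$ guarantees $K^0$ is a direct summand, which will be used when extracting cohomology.

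The next step is to translate the subobject statement into the cohomological identity by applying $H^0(A_{\bar\eta}, - \otimes^L L_\chi)$. For a \emph{fixed} $\chi_{\ell'}$ of finite prime-to-$\ell$ order, the point is to understand $H^0(A_{\bar\eta}, K|_{A_{\bar\eta}} \otimes^L L_\chi)$ as $\chi_\ell$ varies over $\cC(A)_\ell$. Here I would invoke the theory of Gabber--Loeser: the function $\chi_\ell \mapsto H^i(A_{\bar\eta}, K|_{A_{\bar\eta}} \otimes^L L_{\chi_{\ell'}\chi_\ell})$ is controlled by a complex on $\cC(A)_\ell$, and generically (on a dense Zariski open $U$) only $i=0$ contributes — this is precisely the ``generic vanishing'' behaviour that makes $L^0(K,\chi)$ lisse in Remark \ref{rk:monogp}, now applied relatively to the base $\cC(A)_\ell$ rather than to $X$. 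On such $U$, $H^0(A_{\bar\eta}, - \otimes^L L_\chi)$ is an exact $\bar\Q_\ell$-linear functor on $\Perv^{\ULA}(A_{\bar\eta})$ (or on the relevant subquotient Tannakian category), so it commutes with the decomposition $K|_{A_{\bar\eta}} = K^0|_{A_{\bar\eta}} \oplus (\text{complement})$ and with the formation of $\Gamma_{k(\eta)}$-invariants.

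It then remains to identify $H^0(A_{\bar\eta}, K^0|_{A_{\bar\eta}} \otimes^L L_\chi)$ with $H^0(A_{\bar\eta}, K|_{A_{\bar\eta}} \otimes^L L_\chi)^{\Gamma_{k(\eta)}}$. One inclusion is formal: since $K^0$ is defined over $A_\eta$ with $\Gamma_{k(\eta)}$ acting trivially on it (that is the defining property), its cohomology lands in the invariants; exactness of the cohomology functor on $U$ and the fact that $\chi$ itself (coming from $\pi_1(A)$, hence defined over $k$) is $\Gamma_{k(\eta)}$-equivariant are what make this precise. For the reverse inclusion, the invariant part $H^0(A_{\bar\eta}, K|_{A_{\bar\eta}} \otimes^L L_\chi)^{\Gamma_{k(\eta)}}$, being a $\Gamma_{k(\eta)}$-stable subspace, comes from a subobject of $K|_{A_{\bar\eta}}\otimes^L L_\chi$ in the target abelian category that is $\Gamma$-invariant; pulling back the invariance through the (exact, faithful on $U$) cohomology functor forces it to come from a $\Gamma_{k(\eta)}$-invariant perverse subobject of $K|_{A_{\bar\eta}}$, which by maximality is contained in $K^0|_{A_{\bar\eta}}$. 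The main obstacle I anticipate is exactly this last implication: one must know that the functor $H^0(A_{\bar\eta}, -\otimes^L L_\chi)$, after restricting to a suitable Zariski open $U \subset \cC(A)_\ell$, is not merely exact but reflects subobjects well enough (e.g.\ is the fibre functor of a Tannakian quotient, or at least faithful and exact on the relevant abelian subcategory), so that a $\Gamma$-invariant subspace downstairs is the image of a $\Gamma$-invariant subobject upstairs. Establishing this — i.e.\ choosing $U$ so that generic vanishing holds simultaneously for $K$ and all its finitely many (up to the $\Gamma$-action and the semisimple decomposition) perverse subquotients, and checking that $\Gamma_{k(\eta)}$-equivariance is respected — is where the real work lies; everything else is bookkeeping with the abelian category $\Perv^{\ULA}(A\times X/X)$ and the ULA formalism.
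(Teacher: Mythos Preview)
Your proposal has a genuine gap in the ``reverse inclusion'' step, and the construction of $K^0$ is not the one that works.

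First, the construction. ``Maximal $\Gamma_{k(\eta)}$-invariant perverse subobject of $K|_{A_{\bar\eta}}$'' is not well-defined as stated: any subobject of $K|_{A_\eta}$ in $\Perv(A_\eta)$ becomes $\Gamma_{k(\eta)}$-stable after base change to $A_{\bar\eta}$, so the maximal such thing is $K$ itself. What you really want (and what your parenthetical ``pulled back from $A$'' hints at) is a subobject coming from $\Perv(A)$ via $p_A^*$. The paper makes this precise: set $K^1:=p_A^*\,{}^p\cH^0(Rp_{A*}K)$, use adjunction and a perverse truncation to get a canonical map $h:K^1\to K$ in $\Perv^{\ULA}(A\times X/X)$, and let $K^0:=\mathrm{im}(h)$. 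The pushforward $Rp_{A*}$ along the projection to $A$ is the essential geometric input; your sketch never touches $p_A$, and indeed the paper remarks that this is exactly why the argument is restricted to constant abelian schemes.

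Second, and more seriously, your argument for the reverse inclusion cannot work. You want the fibre functor $\omega_\chi=H^0(A_{\bar\eta},-\otimes^L L_\chi)$ to ``reflect'' a $\Gamma_{k(\eta)}$-invariant subspace of $\omega_\chi(K|_{A_\eta})$ to a perverse subobject of $K|_{A_{\bar\eta}}$. But $\omega_\chi$ is only exact and faithful, not full: a subspace $W\subset\omega_\chi(K|_{A_\eta})$ is of the form $\omega_\chi(K')$ for a subobject $K'$ precisely when $W$ is a subrepresentation of the Tannakian group $G_{\omega_\chi}(K|_{A_\eta})$. Being $\Gamma_{k(\eta)}$-invariant only says $W$ is stable under the monodromy group $\Mon(K,\chi)$, which is a priori strictly smaller than $G_{\omega_\chi}(K|_{A_\eta})$. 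So your argument is circular: knowing that $\Mon$-invariant subspaces come from subobjects would already imply the normality theorem that the fixed-part result is meant to feed into.

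The paper replaces this step entirely by the global invariant cycle theorem (a consequence of the decomposition theorem, applied to the proper map $p_X$ and the semisimple complex $K\otimes p_A^*L_\chi$): the restriction map $H^0(A\times X,K\otimes^L p_A^*L_\chi)\to \omega_{\chi_\eta}(K|_{A_\eta})^{\Gamma_{k(\eta)}}$ is surjective. One then identifies the source with $\omega_\chi(K^1|_{A_\eta})$ via the projection formula along $p_A$ and a spectral sequence that degenerates on the generic vanishing locus. Exactness of $\omega_\chi$ then gives $\omega_\chi(K^0|_{A_\eta})=\mathrm{im}\big(\omega_\chi(K^1|_{A_\eta})\to\omega_\chi(K|_{A_\eta})\big)=\omega_{\chi_\eta}(K|_{A_\eta})^{\Gamma_{k(\eta)}}$. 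The decomposition theorem is the missing idea in your sketch.
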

The proof of Theorem \ref{thm:introfixed} uses the projection $p_A:A\times X\to A$, which restricts our results to constant abelian schemes. We leave the question  whether Theorem \ref{thm:introfixed} has an analog for relative perverse sheaves on an arbitrary (non-constant) abelian scheme.
\subsection*{Notation and conventions}
 An object of an abelian category is  \emph{semisimple} if it is the direct sum of finitely many simple objects. An abelian category is  \emph{semisimple} if every object is semisimple. For a field $k$, its absolute Galois group is denoted by $\Gamma_k$.  An algebraic variety means a scheme of finite type and separated over $k$. A linear algebraic group is   \emph{reductive}, if its identity component is  reductive (in the  sense of \cite[6.46, p.135]{milne2017algebraic}). For a topological group, its $\bar{\Q}_{\ell}$-characters are assumed to be continuous. For an irreducible algebraic variety $X$ (on which $\ell$ is invertible) and a character $\chi:\pet(X)\to \bar{\Q}_{\ell}^{\times}$ of its étale fundamental group $\pet(X)$, let $L_{\chi}$  be the corresponding rank one $\bar{\Q}_{\ell}$- lisse sheaf on $X$.

	\section{Recollections on constructible sheaves}\label{sec:perv}
No originality is claimed in Section \ref{sec:perv}. Let $k$ be a field. Let $\ell$ be a prime number invertible in $k$. Fix an algebraic closure $\bar{k}$ of $k$. For every algebraic variety $X$ over $k$, denote by $D_c^b(X):=D_c^b(X,\bar{\Q}_{\ell})$  the triangulated category of complexes of $\bar{\Q}_{\ell}$-sheaves on $X$ with bounded constructible cohomologies defined in \cite[p.74]{beilinson2018faisceaux}. Let $\D_X:D_c^b(X)\to D_c^b(X)^{\op}$ be the Verdier duality functor.  The heart of the standard  t-structure on $D_c^b(X)$ is denoted by $\Cons(X)$, which is the category of constructible $\bar{\Q}_{\ell}$-sheaves on $X$. For every integer $n$, let $\cH^n:D_c^b(X)\to\Cons(X)$ be the functor taking the $n$-th  cohomology sheaf.   For   $F\in \Cons(X)$, set $\Supp F:=\{x\in X|F_x\neq0\}$ to be its support. Then $\Supp F$ is a quasi-constructible subset of $X$ in the sense of \cite[10.1.1]{EGAIV3}.  Let $\Loc(X)\subset\Cons(X)$ be the full subcategory of  $\bar{\Q}_{\ell}$-lisse sheaves on $X$. 

For every subset $S\subset X$, let $\bar{S}$ be its Zariski closure. Let ${}^pD^{\le0}(X)\subset D_c^b(X) $ be the full subcategory of objects $K$ with $\dim \overline{\Supp \cH^nK}\le -n$ for all integers $n$. Let  ${}^pD^{\ge0}(X)\subset D_c^b(X) $ be the full subcategory of objects $K$ with $\D_XK\in {}^pD^{\le0}(X)$. Then $({}^pD^{\le0}(X),{}^pD^{\ge0}(X))$ defines the (absolute) perverse t-structure on $D_c^b(X)$, whose heart   $\Perv(X)$ is the category of perverse sheaves on $X$. The functor $\D_X$ interchanges  ${}^pD^{\le0}(X)$ and ${}^pD^{\ge0}(X)$.    For every integer $n$, let  ${}^p\cH^n:D_c^b(X)\to \Perv(X)$ be the functor taking the $n$-th perverse cohomology sheaf.
For a morphism $f:X'\to X$ of schemes and $K\in D_c^b(X)$, set $K|_{X'}:=f^*K$.  
\subsection{Basics}
\begin{ft}[Projection formula, {\cite[Rk. (2), p.100]{freitag2013etale}, \cite[\href{https://stacks.math.columbia.edu/tag/0F10}{Tag 0F10 (1)}]{stacks-project}}]\label{ft:cstprojection}
	Let $f:X\to Y$ be a morphism of algebraic varieties over $\bar{k}$. Let $L\in D_c^b(Y)$ be an object with $\cH^nL\in \Loc(X)$ for all integers $n$. Then there is a natural isomorphism $(Rf_*-)\otimes^LL\to Rf_*(-\otimes^Lf^*L)$  of functors $D_c^b(X)\to D_c^b(Y)$.
\end{ft}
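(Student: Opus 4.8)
The plan is to split the statement into (i) the construction of a canonical comparison morphism valid for \emph{every} $L$, and (ii) the proof that it is an isomorphism under the lisseness hypothesis. For (i): since $Rf_*$ is right adjoint to $f^*$ and $f^*$ is symmetric monoidal for $\otimes^L$ (it is $f^{-1}$, hence exact, and commutes with tensor products), giving a morphism $(Rf_*K)\otimes^L L\to Rf_*(K\otimes^L f^*L)$ is the same, by adjunction, as giving a morphism
\[
f^*\big((Rf_*K)\otimes^L L\big)\;\cong\;(f^*Rf_*K)\otimes^L f^*L\;\longrightarrow\;K\otimes^L f^*L,
\]
and for this one takes the counit $f^*Rf_*K\to K$ tensored with $\mathrm{id}_{f^*L}$. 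The resulting \emph{projection morphism} $\phi_K\colon(Rf_*K)\otimes^L L\to Rf_*(K\otimes^L f^*L)$ is manifestly natural in $K$, so it is a natural transformation of functors $D_c^b(X)\to D_c^b(Y)$. (Recall that $Rf_*$ sends $D_c^b(X)$ into $D_c^b(Y)$ by Deligne's finiteness theorem, so nothing leaves the bounded constructible derived category.)

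For (ii) I would use that a complex $L\in D_c^b(Y)$ whose cohomology sheaves $\cH^nL$ are lisse is a \emph{dualizable} (perfect) object of $(D_c^b(Y),\otimes^L)$: its dual $L^{\vee}:=\RHom_Y(L,\bar{\Q}_{\ell,Y})$ again has lisse cohomology sheaves, $L^{\vee\vee}\cong L$, and $(-)\otimes^L L\cong\RHom_Y(L^{\vee},-)$. Since $f^*$ is symmetric monoidal it carries dualizable objects to dualizable objects with $f^*L^{\vee}\cong(f^*L)^{\vee}$. Feeding $M=L^{\vee}$ into the sheaf-level adjunction isomorphism $\RHom_Y(M,Rf_*K)\cong Rf_*\RHom_X(f^*M,K)$ gives the chain
\[
(Rf_*K)\otimes^L L\;\cong\;\RHom_Y(L^{\vee},Rf_*K)\;\cong\;Rf_*\RHom_X(f^*L^{\vee},K)\;\cong\;Rf_*\big((f^*L)\otimes^L K\big),
\]
where the last step uses $f^*L^{\vee}\cong(f^*L)^{\vee}$ together with $\RHom_X((f^*L)^{\vee},-)\cong(f^*L)\otimes^L(-)$. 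This identifies the source and target of $\phi_K$; a dévissage along the triangles $\tau_{\le n}L\to\tau_{\le n+1}L\to\cH^{n+1}L[-n-1]$ is in principle available to reduce first to $L\in\Loc(Y)$, but is not even needed here since perfect complexes are dualizable directly.

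The one genuinely delicate point is verifying that the isomorphism assembled above from formal adjunctions and the evaluation/coevaluation of the duality pair $(L,L^{\vee})$ coincides with the projection morphism $\phi_K$, rather than being merely \emph{some} isomorphism between the same two objects; this is a standard but fiddly diagram chase tracing units and counits. If one prefers to bypass it, one can instead argue stalkwise: both functors commute with smooth — in particular étale — base change on $Y$ compatibly with $\phi_K$, so one may pass to the strict henselization $\Spec\mathcal{O}_{Y,\bar y}^{\sh}$ at a geometric point, over which $L$ becomes \emph{constant}; its further pullback to $X\times_Y\Spec\mathcal{O}_{Y,\bar y}^{\sh}$ is then constant as well, and for a constant coefficient complex the projection formula is the trivial statement that cohomology commutes with tensoring by a fixed perfect complex of $\bar{\Q}_{\ell}$-vector spaces. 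Either route makes clear that beyond the dualizability of complexes with lisse cohomology the content is purely formal.
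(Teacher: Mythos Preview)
The paper does not give its own proof of this statement: it is recorded as a \emph{Fact} with citations to \cite[Rk.~(2), p.~100]{freitag2013etale} and the Stacks Project. So there is no paper-side argument to compare against.

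Your argument is correct and standard. The construction of the projection morphism via the $(f^*,Rf_*)$ adjunction and monoidality of $f^*$ is the usual one, and the key observation that a bounded complex with lisse cohomology is dualizable in $(D_c^b(Y),\otimes^L)$ is exactly what makes the projection formula go through. One small note: the statement as printed in the paper has ``$\cH^nL\in\Loc(X)$'', which is a typo for $\Loc(Y)$; your proof correctly reads it that way. Your caution about identifying the abstract isomorphism with the canonical $\phi_K$ is well placed; the stalkwise reduction you sketch (pass to a strict henselization so that $L$ becomes constant) is the cleanest way to finish, and is essentially how the cited references argue.
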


\begin{ft}[{\cite[Prop.~12.10]{freitag2013etale}}]\label{ft:genericlisse}Let $X$ be an algebraic variety over $k$. For every $F\in \Cons(X)$, there is a nonempty Zariski open subset $U\subset X$ with $F|_U\in \Loc(U)$. \end{ft}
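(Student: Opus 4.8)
The statement is essentially topological, so I would prove it in two steps: reduce to the case where $X$ is integral, and then read off the open set directly from the definition of constructibility.

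For the reduction, both constructibility and lisseness depend only on the underlying topological space, so I may replace $X$ by its reduction $X_{\red}$. Since $X$ is Noetherian it has finitely many irreducible components $Z_1,\dots,Z_r$, and $Z_1\setminus\bigcup_{j\ge 2}Z_j$ is a nonempty Zariski open subset of $X$ lying inside the single component $Z_1$; restricting to it, I may assume $X$ is integral, with generic point $\eta$.

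Now unwind the definition: a constructible $\bar{\Q}_\ell$-sheaf on the Noetherian scheme $X$ is, in Deligne's adic formalism as set up in \cite{beilinson2018faisceaux}, lisse along each stratum of some finite partition $X=\bigsqcup_i X_i$ into locally closed subschemes. Let $X_0$ be the unique stratum with $\eta\in X_0$. Then $\overline{X_0}\supseteq\overline{\{\eta\}}=X$, so $X_0$ is dense in the irreducible space $X$; and a dense locally closed subset of an irreducible space is open (writing $X_0=V\cap C$ with $V$ open and $C$ closed, one gets $C\supseteq\overline{X_0}=X$, hence $X_0=V$). Thus $U:=X_0$ is a nonempty Zariski open subset of $X$ with $F|_U\in\Loc(U)$, as desired.

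The one point carrying genuine content — and the only place I expect any friction — is the input used above: that a constructible $\bar{\Q}_\ell$-sheaf is lisse along the strata of a \emph{single} finite stratification. For torsion coefficients $\Z/\ell^n$ this is the definition; for $\ell$-adic and then $\bar{\Q}_\ell$ coefficients one reduces to the torsion case via the $\ell$-adic (or AR-$\ell$-adic) condition, e.g. by checking that lisseness of the mod-$\ell$ reduction on a dense open propagates up the tower for a flat constructible sheaf, a general one differing from a flat one by bounded torsion. All of this, including Fact~\ref{ft:genericlisse} itself, is recorded as \cite[Prop.~12.10]{freitag2013etale}, so if space were tight I would simply cite it; otherwise I would spell out the short reduction above. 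Equivalently, one can package the argument as Noetherian induction on closed subsets of $X$, which again bottoms out at the existence of such a stratification on the minimal bad closed subset.
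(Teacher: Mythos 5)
The paper itself provides no proof here; Fact~\ref{ft:genericlisse} is stated as a recalled result with the citation to \cite[Prop.~12.10]{freitag2013etale} doing all the work. Your proposal re-derives it, and the derivation is correct: the reduction to $X$ integral (pass to $X_{\red}$, restrict to the open complement of all but one irreducible component) is sound, and the topological step — that a locally closed subset containing the generic point of an irreducible scheme is dense, hence open, since writing $X_0=V\cap C$ forces $C\supseteq\overline{X_0}=X$ — is exactly right. Where the two approaches really diverge is in what is taken as the black box. The paper treats the whole statement as the black box. You instead take as the black box the ``single finite stratification'' characterization of constructible $\bar{\Q}_\ell$-sheaves, from which the claim follows in two lines. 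As you candidly observe, that characterization is not the naive definition for $\bar{\Q}_\ell$-coefficients (one defines things level-by-level for $\Z/\ell^n$ and must show a \emph{common} stratification exists for the whole adic system, typically by Noetherian induction plus the bounded-torsion trick); it is essentially the content of the very proposition being cited. So your argument is best read as a clean reduction of Fact~\ref{ft:genericlisse} to that stratification lemma, together with an honest acknowledgment that the stratification lemma itself still requires the reference or its Noetherian-induction proof. That is a perfectly reasonable way to present the material, and more informative than the paper's bare citation, but it is not a shorter or more elementary route to the result — the hard work is merely relocated.
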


	\begin{df}[{\cite[Def.~78]{brosnan2018unit}}]\label{df:ssDcb}\index{semisimple constructible complex of sheaves}
Let $X$ be an algebraic variety over $k$.	An object $K\in D_c^b(X)$ is called \emph{semisimple} if it is isomorphic to a finite direct sum of degree shifts of  semisimple objects of $\Perv(X)$.
\end{df}
If $K\in D_c^b(X)$ is semisimple,	then it is isomorphic to $\oplus_{n\in \Z}{}^p\cH^n(K)[-n]$ in $D_c^b(X)$, and each ${}^p\cH^n(K)$ is a semisimple object of $\Perv(X)$. A degree shift of a semisimple object of $D_c^b(X)$ is still semisimple. 

Lemma \ref{lm:shriksimple} is used in the proof of Lemma \ref{lm:monreductive}.
\begin{lm}\label{lm:shriksimple}Let $X$ be an algebraic variety over $k$.
Let $U\subset X$ be an open subset of $X$. Then the  functor $(-)|_U:\Perv(X)\to\Perv(U)$ sends every simple object of $\Perv(X)$ to a simple or zero object of $\Perv(U)$. In particular, the functor $(-)|_U:D_c^b(X)\to D_c^b(U)$ preserves semisimplicity.
\end{lm}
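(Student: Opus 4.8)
The plan is to exploit the structure theory of simple perverse sheaves via the intermediate-extension (IC) construction. First I would recall that every simple object of $\Perv(X)$ is of the form $j_{!*}(\cL[\dim Z])$ for some irreducible locally closed smooth subvariety $j:Z\hookrightarrow X$ and some irreducible (hence simple) lisse sheaf $\cL$ on $Z$; here $j$ factors as the open immersion of $Z$ into its closure $\bar Z$ followed by the closed immersion $\bar Z\hookrightarrow X$. The key geometric input is that restriction along the open immersion $i:U\hookrightarrow X$ is \emph{compatible} with intermediate extension: if $W:=Z\cap U$, which is an open (possibly empty) subset of $Z$, then $i^*\bigl(j_{!*}(\cL[\dim Z])\bigr)$ is either zero (when $W=\emptyset$) or canonically isomorphic to $j'_{!*}\bigl((\cL|_W)[\dim Z]\bigr)$, where $j':W\hookrightarrow U$ is the inclusion. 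This compatibility holds because $i^*=i^!$ is exact for the perverse t-structure (as $i$ is an open immersion, indeed étale), so it preserves the defining support and cosupport conditions that characterize the intermediate extension as the unique simple perverse extension with no subobject or quotient supported on the boundary.

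Next I would observe that when $W\neq\emptyset$, $W$ is a nonempty open in the irreducible variety $Z$, so it is itself irreducible and smooth, and $\cL|_W$ remains an irreducible lisse sheaf on $W$ (an irreducible local system restricted to a nonempty connected open is still irreducible, since $\pi_1(W)\to\pi_1(Z)$ is surjective). Therefore $j'_{!*}((\cL|_W)[\dim W])$ is again a simple perverse sheaf on $U$. Combined with the previous paragraph, this shows $i^*$ sends a simple object of $\Perv(X)$ to a simple or zero object of $\Perv(U)$, which is the first assertion. The ``in particular'' then follows formally: an arbitrary semisimple $K\in D_c^b(X)$ is, by Definition \ref{df:ssDcb} and the remark following it, isomorphic to $\bigoplus_{n\in\Z}{}^p\cH^n(K)[-n]$ with each ${}^p\cH^n(K)$ a finite direct sum of simple perverse sheaves; since $i^*$ commutes with direct sums and with degree shifts, and sends each simple summand to a simple or zero perverse sheaf on $U$, the restriction $i^*K$ is again a finite direct sum of shifts of semisimple perverse sheaves, i.e.\ semisimple.

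The main obstacle is establishing the compatibility of $i^*$ with $j_{!*}$ cleanly. One route is to cite the standard fact (Beilinson--Bernstein--Deligne, \cite[2.1.7.1, 4.2.5]{beilinson2018faisceaux}) that for a smooth morphism, in particular an open immersion, pullback commutes with intermediate extension up to the relative dimension shift; for an open immersion the shift is zero. Alternatively, and perhaps more self-containedly, I would argue directly: $i^*$ is t-exact, so $i^*$ of a simple perverse sheaf $P$ is a perverse sheaf on $U$; if it is nonzero, it suffices to check it has neither a nonzero subobject nor a nonzero quotient that is an IC of a proper closed subset, but any such would extend (by intermediate extension along $U\hookrightarrow X$, or simply by taking closures of supports) to contradict simplicity of $P$ — here one uses that $P$, being $j_{!*}(\cL[\dim Z])$, has $\bar Z$ as its support and $\cL[\dim Z]$ as its restriction to the smooth open locus $Z$, data which are visibly recovered from $i^*P$ when $Z\cap U\neq\emptyset$. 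I expect the proof to be short once the IC-compatibility is invoked; the only case requiring a moment's care is $Z\cap U=\emptyset$, where $i^*P$ is supported on $\bar Z\cap U\subsetneq \overline{Z\cap U}=\emptyset$ forces $i^*P=0$.
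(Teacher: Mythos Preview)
Your proposal is correct and follows essentially the same route as the paper: both invoke the classification of simple perverse sheaves as intermediate extensions $j_{!*}(\cL[\dim Z])$ from \cite[Thm.~4.3.1~(ii)]{beilinson2018faisceaux}, observe that open restriction commutes with $j_{!*}$, and use surjectivity of $\pi_1(Z\cap U)\to\pi_1(Z)$ (valid since $Z$ is smooth, hence normal) to conclude that $\cL|_{Z\cap U}$ remains simple. One small slip to fix: in your final sentence the chain ``$\bar Z\cap U\subsetneq \overline{Z\cap U}=\emptyset$'' is ill-formed (nothing is a proper subset of $\emptyset$); the correct observation is that $Z\cap U=\emptyset$ forces $\bar Z\cap U=\emptyset$ because $Z$ is dense in the irreducible $\bar Z$, so any nonempty open $\bar Z\cap U$ would have to meet $Z$.
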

\begin{proof}
Let $K$ be a simple object of $\Perv(X)$. By \cite[Thm.~4.3.1 (ii)]{beilinson2018faisceaux}, there is an irreducible, locally closed and geometrically smooth subvariety $j:V\to X$ and a simple $\bar{\Q}_{\ell}$-lisse sheaf $L$ on $V$, such that $K$ is isomorphic to $j_{!*}L[\dim V]$. If $V$ is disjoint from $U$, then $K|_U=0$. Now assume that $V$ intersects $U$. Take a geometric point $\bar{x}$ on $V\cap U$. From \cite[V, Prop.~8.2]{SGA1},  as $V$ is normal, the morphism $\pet(U\cap V,\bar{x})\to \pet(V,\bar{x})$ is surjective. Thus, the composite representation $\pet(U\cap V,\bar{x})\to \GL(L_{\bar{x}})$ is also simple, \textit{i.e.}, the $\bar{\Q}_{\ell}$-lisse sheaf $L|_{U\cap V}$ is simple. Let $h:U\cap V\to U$ be the immersion. Then $K|_U$ is isomorphic to $h_{!*}L|_{U\cap V}[\dim (U\cap V)]$, hence simple in $\Perv(U)$.
\end{proof}
When $k=\C$,	Fact \ref{ft:BBD} \ref{it:decomp} follows from Kashiwara's conjecture for semisimple perverse sheaves and the paragraph following \cite[Thm.~6.2.5]{beilinson2018faisceaux}. Kashiwara's conjecture is formulated in \cite[Sec.~1]{kashiwara1998semisimple}; see also \cite[Sec.~1.2, 1]{drinfeld2001conjecture}. It is reduced to de Jong's conjecture by Drinfeld \cite{drinfeld2001conjecture}, which in turn is proved in \cite{bockle2006mod} and \cite{gaitsgory2007jong}. The  case of general $k$ follows \textit{via} Lemma \ref{lm:pervext}; see also \cite[Sec.~1.7]{drinfeld2001conjecture}.
\begin{ft}\label{ft:BBD}
	Let $k$ be an algebraically closed field of characteristic $0$. Let $f:X\to Y$ be a proper morphism of algebraic varieties over $k$. Let $K$ be a semisimple object of $D_c^b(X)$.  
	\begin{enumerate}\item\label{it:decomp} \textup{(Decomposition theorem)} Then  $Rf_*K$ is a semisimple object of $D_c^b(Y)$.		\item\label{it:globalinv} \textup{(Global invariant cycle theorem, \cite[Cor.~6.2.8]{beilinson2018faisceaux})} Let $i$ be an integer. By Fact \ref{ft:genericlisse}, there is  a nonempty connected open subset $V\subset Y$  such that $\cH^iRf_*K|_V$ is a lisse sheaf. Then for every  $y\in V(k)$, the canonical map \[H^i(X,K)\to H^i(X_y,K|_{X_y})^{\pet(V,y)}\] is surjective.\end{enumerate}
\end{ft}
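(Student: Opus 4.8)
Both parts should follow from standard results once the general semisimple complex $K$ is reduced to a semisimple perverse sheaf, so the plan is, in each part, to peel off the perverse cohomology of $K$ and then quote the relevant theorem.

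For part~\ref{it:decomp} I would first use that a semisimple $K\in D_c^b(X)$ is isomorphic to $\bigoplus_{n\in\Z}{}^p\cH^n(K)[-n]$ with each ${}^p\cH^n(K)$ a semisimple perverse sheaf; since $Rf_*$ is triangulated it commutes with the shifts $[-n]$ and with finite direct sums, and both operations preserve semisimplicity of a complex, so it suffices to prove that $Rf_*K$ is semisimple when $K$ is itself a semisimple perverse sheaf. Writing such a $K$ as a finite direct sum of intermediate extensions of lisse sheaves on smooth locally closed subvarieties (\cite[Thm.~4.3.1~(ii)]{beilinson2018faisceaux}), I would then spread out $X$, $Y$, $f$ and $K$ over a subfield $k_0\subset k$ finitely generated over $\Q$, fix an embedding $k_0\hookrightarrow\C$, and use Lemma~\ref{lm:pervext} together with smooth and proper base change to transfer semisimplicity of $Rf_*K$ between the model over $k$ and the one over $\C$ (cf.\ \cite[Sec.~1.7]{drinfeld2001conjecture}). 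Over $\C$ this is exactly Kashiwara's conjecture for semisimple perverse sheaves (see the discussion following \cite[Thm.~6.2.5]{beilinson2018faisceaux}), which Drinfeld \cite{drinfeld2001conjecture} reduces to de Jong's conjecture, hence a theorem by \cite{bockle2006mod} and \cite{gaitsgory2007jong}.

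For part~\ref{it:globalinv} I would reduce to the perverse case in the same way. Decomposing $K\cong\bigoplus_n{}^p\cH^n(K)[-n]$ gives $H^i(X,K)=\bigoplus_n H^{i-n}(X,{}^p\cH^n(K))$, and, since pullback commutes with shifts and direct sums, $K|_{X_y}\cong\bigoplus_n({}^p\cH^n(K))|_{X_y}[-n]$; hence both the source and the target of the map decompose compatibly into summands indexed by $n$, with $i$ replaced by $i-n$ in the $n$-th summand, and the $\pet(V,y)$-action respects this because $Rf_*$ commutes with shifts and sums. Shrinking $V$ if necessary (harmless for the conclusion) so that the finitely many nonzero sheaves $\cH^{i-n}Rf_*({}^p\cH^n(K))$ are lisse on it (Fact~\ref{ft:genericlisse}), the statement for $K$ follows from the same statement for each semisimple perverse sheaf ${}^p\cH^n(K)$ in degree $i-n$, which is \cite[Cor.~6.2.8]{beilinson2018faisceaux} (again reducing to $\C$ as above if needed); that corollary is itself deduced from the decomposition theorem of part~\ref{it:decomp}.

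I expect the only genuine difficulty to be that part~\ref{it:decomp} rests on Kashiwara's conjecture, a deep theorem assembled from the work of Drinfeld, B\"ockle--Khare and Gaitsgory; the rest is routine manipulation of perverse truncation plus a standard descent from an arbitrary algebraically closed field of characteristic $0$ to $\C$, and part~\ref{it:globalinv} adds nothing beyond that.
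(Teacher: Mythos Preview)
Your proposal is correct and matches the paper's own justification essentially line for line: the paper does not give a formal proof environment for this Fact but, in the paragraph immediately preceding it, explains that part~\ref{it:decomp} over $\C$ is Kashiwara's conjecture (reduced by Drinfeld to de Jong's conjecture, proved by B\"ockle--Khare and Gaitsgory), and that the general $k$ case follows \emph{via} Lemma~\ref{lm:pervext} and \cite[Sec.~1.7]{drinfeld2001conjecture}; part~\ref{it:globalinv} is simply cited from \cite[Cor.~6.2.8]{beilinson2018faisceaux}. Your added detail---explicitly splitting $K$ into its perverse cohomology pieces and spreading out over a finitely generated subfield before embedding into $\C$---is exactly the unpacking of these references that the paper leaves implicit.
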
\begin{lm}\label{lm:pervext}
Let $E/F$ be an extension of algebraically closed fields. Let $X$ be an algebraic variety over $F$. Then:
\begin{enumerate} \item\label{it:JKLMA1} The functor $(-)|_{X_E}:D_c^b(X)\to D_c^b(X_E)$ is  fully faithful. It restricts to an exact functor $\Perv(X)\to \Perv(X_E)$.
	\item\label{it:scalarextensionofsimple} An object of $\Perv(X)$ is simple (resp. semisimple) if and only if its image under $(-)|_{X_E}:\Perv(X)\to \Perv(X_E)$ is simple (resp. semisimple). \end{enumerate}
\end{lm}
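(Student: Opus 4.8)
The plan is to deduce both parts from the invariance of étale cohomology under extension of the algebraically closed base field, together with the structure theory of simple perverse sheaves. Write $f\colon X_E\to X$ for the projection, so that $(-)|_{X_E}=f^{*}$, and observe that $\Spec E$ is a cofiltered limit of smooth affine $F$-schemes: writing $E$ as the filtered union of its algebraically closed subfields $E_i$ of finite transcendence degree $d_i$ over $F$, each $E_i$ is the algebraic closure of the function field $F(\mathbb A^{d_i})$ of affine space $\mathbb A^{d_i}$ over $F$, so $\Spec E_i$ is a geometric generic point of $\mathbb A^{d_i}$ and $X_{E_i}$ is the geometric generic fibre of $\mathrm{pr}\colon X\times_F\mathbb A^{d_i}\to\mathbb A^{d_i}$. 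First I would establish the cohomological input: for every $M\in D_c^b(X)$ the natural map $R\Gamma(X,M)\to R\Gamma(X_E,f^{*}M)$ is an isomorphism. For a fixed $i$, smooth base change applied to the cartesian square built from $X\to\Spec F$ and $\mathbb A^{d_i}\to\Spec F$ shows that $R\mathrm{pr}_{*}$ of the pullback of $M$ is the constant complex $R\Gamma(X,M)$ on $\mathbb A^{d_i}$; its stalk at the geometric generic point both equals $R\Gamma(X,M)$ and computes $R\Gamma(X_{E_i},M|_{X_{E_i}})$, and passing to the filtered colimit by the continuity of étale cohomology gives the statement for $E$. The same limit argument — reducing assertions over $X_E$ to the noetherian approximations $X\times_F\mathbb A^{d_i}$, over which $f$ becomes smooth — shows that $f^{*}$ commutes with $\RHom$, with Verdier duality $\D$ (i.e.\ $f^{*}\D_X\cong\D_{X_E}f^{*}$), and with $j_{!}$ and $Rj_{*}$ along open immersions; crucially $\Spec E\to\Spec F$ must here be treated as pro-smooth, not as a morphism of relative dimension zero, which is what prevents a spurious Tate twist from entering these comparisons.

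Given this, part \ref{it:JKLMA1} is formal. Full faithfulness: $\Hom_{D_c^b(X_E)}(f^{*}K,f^{*}L)=H^0\!\bigl(X_E,f^{*}\RHom(K,L)\bigr)=H^0\!\bigl(X,\RHom(K,L)\bigr)=\Hom_{D_c^b(X)}(K,L)$. For t-exactness, $K\in{}^pD^{\le0}(X)$ means $\dim\overline{\Supp\cH^nK}\le-n$ for all $n$; since $f$ is flat, $\cH^n(f^{*}K)=f^{*}\cH^nK$ has support $f^{-1}(\Supp\cH^nK)$, whose closure is $(\overline{\Supp\cH^nK})_E$ and has the same dimension as $\overline{\Supp\cH^nK}$ (the dimension of a finite-type scheme over a field is unchanged by extension of the field). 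Hence $f^{*}$ preserves ${}^pD^{\le0}$ in both directions, and, commuting with $\D$, it preserves ${}^pD^{\ge0}$ as well; so $f^{*}$ is perverse t-exact and restricts to an exact functor $\Perv(X)\to\Perv(X_E)$.

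For part \ref{it:scalarextensionofsimple}, the ``if'' direction is immediate from exactness and faithfulness of $f^{*}$ (a nonzero proper subobject of $S$ would give one of $f^{*}S$, and $f^{*}S\neq0$). For ``only if'', let $S$ be simple in $\Perv(X)$; by \cite[Thm.~4.3.1 (ii)]{beilinson2018faisceaux} write $S\cong j_{!*}(\cL[\dim V])$ with $j\colon V\hookrightarrow X$ an irreducible smooth locally closed subvariety and $\cL$ a simple lisse $\bar\Q_\ell$-sheaf on $V$. Since $F$ is algebraically closed, $V$ is geometrically connected, so $\pet(V_E)\to\pet(V)$ is surjective and $\cL|_{V_E}$ is again simple; and $f^{*}$, being t-exact and commuting with $j_{!}$ and $Rj_{*}$, commutes with $j_{!*}$, so $f^{*}S\cong(j_E)_{!*}(\cL|_{V_E}[\dim V_E])$ is simple in $\Perv(X_E)$ by the same theorem. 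The semisimple case: ``$\Rightarrow$'' is the simple case applied to each summand; for ``$\Leftarrow$'', assume $f^{*}P$ semisimple and suppose $P/\mathrm{soc}(P)\neq0$. A simple subobject $S'\subset P/\mathrm{soc}(P)$ gives a non-split extension $0\to\mathrm{soc}(P)\to P'\to S'\to0$ with $P'\subseteq P$; under the isomorphism $\mathrm{Ext}^1_{\Perv(X)}(S',\mathrm{soc}(P))=\Hom_{D_c^b(X)}(S',\mathrm{soc}(P)[1])\cong\mathrm{Ext}^1_{\Perv(X_E)}(f^{*}S',f^{*}\mathrm{soc}(P))$ supplied by full faithfulness, its (nonzero) class goes to that of $0\to f^{*}\mathrm{soc}(P)\to f^{*}P'\to f^{*}S'\to0$, which splits because $f^{*}P'$, a subobject of the semisimple $f^{*}P$, is semisimple. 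This contradiction forces $P=\mathrm{soc}(P)$, i.e.\ $P$ semisimple.

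The main obstacle is the bookkeeping behind the limit arguments of the first paragraph: verifying carefully that the constructible formalism and its base-change isomorphisms genuinely descend from the finite-type approximations $X\times_F\mathbb A^{d_i}$ to the non-noetherian $X_E$, and in particular that regarding $\Spec E\to\Spec F$ as a cofiltered limit of smooth morphisms is exactly what keeps $f^{*}$ compatible with $\D$ and with $j_{!*}$ without introducing twists. Once this is in place, both parts follow formally.
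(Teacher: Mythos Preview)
Your argument is correct and, for Part~\ref{it:JKLMA1} and the simple case of Part~\ref{it:scalarextensionofsimple}, follows the same route as the paper: the paper simply cites \cite[Lem.~A.1]{javanpeykar2023monodromy} for Part~\ref{it:JKLMA1} (your smooth-base-change-plus-limits argument is exactly what underlies that reference) and, for simples, invokes \cite[Thm.~4.3.1~(ii)]{beilinson2018faisceaux} together with \cite[Prop.~5.3]{esnault2017survey}, which encodes the $\pet(V_E)\twoheadrightarrow\pet(V)$ step you spell out. The one genuine difference is the semisimple ``$\Leftarrow$'' direction. The paper argues directly: given any subobject $P\subset K$, semisimplicity of $K|_{X_E}$ yields a retraction $r\colon K|_{X_E}\to P|_{X_E}$, which by full faithfulness of $\Hom$ descends to a retraction $r'\colon K\to P$; thus every subobject of $K$ is a summand, and one finishes with the Noetherian--Artinian criterion (Lemma~\ref{lm:finisubq}~\ref{it:subhascom}). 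You instead run a socle argument and use full faithfulness one degree higher, identifying $\mathrm{Ext}^1_{\Perv(X)}\cong\mathrm{Ext}^1_{\Perv(X_E)}$ via $\Hom_{D_c^b}(-,-[1])$. Both are short and rest on the same two ingredients (full faithfulness and finite length in $\Perv$); the paper's version is marginally more elementary in that it stays at the level of $\Hom$ and does not need the identification of $\mathrm{Ext}^1$ in the heart with morphisms in the derived category, while yours makes the obstruction to semisimplicity more visibly an $\mathrm{Ext}^1$ class.
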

\begin{proof}
\begin{enumerate}
\item In characteristic zero, it is  the  first half of \cite[Lem.~A.1]{javanpeykar2023monodromy}. That  proof works in positive characteristic as well. 
\item Let $K\in\Perv(X)$. By \cite[Thm.~4.3.1 (ii)]{beilinson2018faisceaux} and \cite[Prop.~5.3]{esnault2017survey},  $K$ is simple if and only if   $K|_{X_E}$ is simple. Thus, if $K$ is semisimple, then so is $K|_{X_E}$. Conversely, assume that $K|_{X_E}$ is semisimple. For every subobject $P\subset K$ in $\Perv(X)$, there is a morphism $r:K|_{X_E}\to P|_{X_E}$ in $\Perv(X_E)$ with $r|_{(P|_{X_E})}=\Id_{P|_{X_E}}$. By Part \ref{it:JKLMA1}, there is a morphism $r':K\to P$ in $\Perv(X)$ with $r'|_{(K|_{X_E})}=r$ and $r'|_P=\Id_P$. Thus, $P$ admits a direct complement in $K$. By Lemma \ref{lm:finisubq} \ref{it:subhascom}, $K$ is semisimple. \end{enumerate}
\end{proof}\begin{lm}\label{lm:finisubq}Let $\cA$ be an abelian category. Let $X\in \cA$ be a Noetherian and Artinian object. 
\begin{enumerate}\item\label{it:subq=factor} Let $Y$ be a simple subquotient of $X$. Then there is a composite series of $X$ with one graded piece isomorphic to $Y$. In particular, up to isomorphism $X$ has only finitely many simple subquotients.
	\item\label{it:subhascom} If every subobject of $X$ admits a direct complement, then $X$ is semisimple.
\end{enumerate}\end{lm}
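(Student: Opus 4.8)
The plan is to reduce both parts to the Jordan--H\"older theory of objects of finite length: since $X$ is simultaneously Noetherian and Artinian, it has finite length $\ell(X)$, and so does every subobject and every quotient of $X$.

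For part \ref{it:subq=factor}, write the simple subquotient as $Y \cong B/A$ for subobjects $A \subseteq B \subseteq X$. I would build a composite series of $X$ by splicing three pieces together: a composite series of $A$, followed by the length-one step $A \subseteq B$ (which needs no refinement because $B/A \cong Y$ is simple), followed by the preimages under $X \twoheadrightarrow X/B$ of a composite series of $X/B$ --- here one uses that subobjects of $X/B$ correspond bijectively, and compatibly with the formation of successive quotients, to subobjects of $X$ containing $B$. The outcome is a composite series of $X$ in which $B/A \cong Y$ occurs as a graded piece. For the finiteness claim, apply the Jordan--H\"older theorem: the multiset of graded pieces of a composite series of $X$ is independent of the series up to isomorphism and reordering, so every simple subquotient of $X$ is isomorphic to one of the finitely many graded pieces of one fixed composite series.

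For part \ref{it:subhascom}, I would induct on $\ell(X)$, the case $X = 0$ being trivial. If $X \neq 0$, the Artinian hypothesis furnishes a minimal nonzero subobject $S \subseteq X$, which is necessarily simple; by hypothesis $X = S \oplus C$ for some $C$ with $\ell(C) = \ell(X) - 1$. The crux is that $C$ inherits the hypothesis of the lemma: given $D \subseteq C$, choose a complement $E$ of $D$ in $X$, so $D + E = X$ and $D \cap E = 0$; since $D \subseteq C$, the modular law in the (modular) lattice of subobjects of $X$ gives $C = C \cap (D + E) = D + (C \cap E)$, whereas $D \cap (C \cap E) \subseteq D \cap E = 0$, hence $C = D \oplus (C \cap E)$. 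By the inductive hypothesis $C$ is semisimple, and therefore $X = S \oplus C$ is a finite direct sum of simple objects.

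I expect the only genuine obstacle to be bookkeeping rather than depth: in part \ref{it:subq=factor}, invoking with care the exactness of the relevant quotient functors and the order-preserving correspondence of subobjects when concatenating the three series; and in part \ref{it:subhascom}, citing the standard fact that the poset of subobjects of an object in an abelian category is a modular lattice. The finite-length hypothesis carries all the weight, and no new idea is needed.
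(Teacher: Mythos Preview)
Your proof is correct. For Part~\ref{it:subq=factor} it is essentially identical to the paper's: both of you write the simple subquotient as a quotient $B/A$ of a subobject $B\subset X$, splice a composition series of $A$, the single step $A\subset B$, and (preimages of) a composition series of $X/B$, then invoke Jordan--H\"older.

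For Part~\ref{it:subhascom} your route genuinely differs. The paper argues by maximality: the set of nonzero semisimple subobjects of $X$ is nonempty (Artinian gives a simple subobject) and has a maximal element $X_0$ (Noetherian); a complement $F$ of $X_0$ in $X$ must vanish, since otherwise a simple subobject of $F$ would enlarge $X_0$. You instead induct on the length, peel off a simple summand $S$, and use the modular law to show that the cofactor $C$ inherits the complement hypothesis. Both arguments are standard; the paper's avoids citing modularity and gives a one-shot proof, while yours is more explicitly constructive and makes the role of finite length transparent through the induction. Your appeal to the modular law is legitimate in any abelian category, so no gap arises there.
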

\begin{proof}\hfill\begin{enumerate}\item There is a subobject $i:X_0\subset X$ and a quotient $q:X_0\to Y$ in $\cA$. Let $N=\ker(q)$. By \cite[\href{https://stacks.math.columbia.edu/tag/0FCH}{Tag 0FCH}, \href{https://stacks.math.columbia.edu/tag/0FCI}{Tag 0FCI}]{stacks-project}, both  $N$ and $X/X_0$ are Noetherian and Artinian. From \cite[\href{https://stacks.math.columbia.edu/tag/0FCJ}{Tag 0FCJ}]{stacks-project}, they admit composite series. A composite series of $X/X_0$ is equivalent to a filtration $X_0\subset X_1\subset X_2\subset \dots\subset X_n=X$ by subobjects such that $X_i/X_{i-1}$ is simple for every $1\le i\le n$. This filtration and every composite  series of $N$ glue to a composite series of $X$
	with a step $N\subset X_0$, whose factor is isomorphic to $Y$. By the Jordan-Hölder lemma \cite[\href{https://stacks.math.columbia.edu/tag/0FCK}{Tag 0FCK}]{stacks-project},  up to isomorphism $Y$ has finitely many choices.
	\item One may assume  $X\neq0$. Let $\cP$ be the family of nonzero semisimple subobjects of $X$. As $X$ is Artinian, it has a  simple subobject, so $\cP$ is nonempty. Since $X$ is Noetherian,  $\cP$ has a maximal element $i:X_0\to X$. By assumption, there is a subobject $F\subset X$ with $X_0\oplus F=X$. Then $F=0$. (Otherwise, by \cite[\href{https://stacks.math.columbia.edu/tag/0FCJ}{Tag 0FCJ}]{stacks-project}, $F$ has a nonzero simple subobject $F_0$. Then $X_0\oplus F_0\in \cP$ is strictly larger than $X_0$, which is  a contradiction.) Therefore, $i:X_0\to X$ is an isomorphism, and $X$ is semisimple. \end{enumerate}
\end{proof}
\begin{rk}
In a Noetherian and Artinian abelian category, an object may have infinitely many distinct (non semisimple) subobjects up to isomorphism. This should be compared with Lemma \ref{lm:finisubq} \ref{it:subq=factor}. 
\end{rk}

\begin{lm}\label{lm:tensorLchi}Let $X$ be an algebraic variety over $k$.
	Let $L$ be a $\bar{\Q}_{\ell}$-lisse sheaf of rank $1$ on $X$. Then $-\otimes^LL:D_c^b(X)\to D_c^b(X)$ is an equivalence of categories.  It is t-exact for the perverse t-structures.
\end{lm}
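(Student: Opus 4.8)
The claim is that for a rank-one lisse sheaf $L$ on $X$, the functor $-\otimes^L L : D_c^b(X)\to D_c^b(X)$ is an equivalence and is t-exact for the perverse t-structure. I would first handle the equivalence and then t-exactness, the latter being the substantive point. Since $L$ has rank $1$ and is lisse, its dual $L^\vee$ (also rank one lisse) satisfies $L\otimes^L L^\vee \cong \bar{\Q}_\ell$ canonically; this uses that tensoring with a locally free rank-one sheaf is invertible, and the derived tensor product $\otimes^L$ agrees with the underived one on the level of stalks because $L$ is flat. Hence $-\otimes^L L^\vee$ is a quasi-inverse to $-\otimes^L L$ (associativity of $\otimes^L$ and the identification $-\otimes^L\bar{\Q}_\ell\cong \Id$), establishing the equivalence. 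One should note $\otimes^L L$ commutes with shifts, so it is automatically a triangulated equivalence.

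For t-exactness, the clean argument is to show that $-\otimes^L L$ preserves both ${}^pD^{\le 0}(X)$ and ${}^pD^{\ge 0}(X)$, since it is its own "inverse type" up to replacing $L$ by $L^\vee$. To see ${}^pD^{\le 0}$ is preserved: for $K\in {}^pD^{\le 0}(X)$ one has $\dim\overline{\Supp\cH^n K}\le -n$; tensoring with a rank-one lisse sheaf does not change the cohomology sheaves' supports, because for each $n$ the natural map gives $\cH^n(K\otimes^L L)\cong \cH^n(K)\otimes L$ (again flatness of $L$ kills the higher Tor terms), and $\Supp(\cH^n(K)\otimes L) = \Supp \cH^n(K)$ since $L$ is everywhere of rank $1$. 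Hence $K\otimes^L L\in {}^pD^{\le 0}(X)$. For ${}^pD^{\ge 0}$, I would use Verdier duality: $\D_X(K\otimes^L L)\cong \D_X(K)\otimes^L L^\vee$ — this is the standard compatibility of $\D_X$ with tensoring by a rank-one lisse sheaf, which can be seen from $\RHom(K\otimes L, \omega_X)\cong \RHom(K,\omega_X)\otimes L^\vee$. Then $K\in{}^pD^{\ge 0}(X)$ means $\D_X K\in {}^pD^{\le 0}(X)$, so by the case just treated $\D_X(K)\otimes^L L^\vee\in {}^pD^{\le 0}(X)$, i.e. $\D_X(K\otimes^L L)\in {}^pD^{\le 0}(X)$, i.e. $K\otimes^L L\in {}^pD^{\ge 0}(X)$. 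Combining the two, $-\otimes^L L$ is t-exact and in particular restricts to an auto-equivalence of $\Perv(X)$.

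The only place requiring a little care — and the main (mild) obstacle — is the clean identification $\cH^n(K\otimes^L L)\cong \cH^n(K)\otimes L$ and the duality compatibility $\D_X(K\otimes^L L)\cong \D_X K\otimes^L L^\vee$; both ultimately rest on $L$ being lisse of rank $1$, hence flat and invertible, so that $\otimes^L L$ is exact on $\Cons(X)$ with no derived correction. Once this is granted, the support computation and the Verdier-duality sandwich are formal. I would phrase the write-up to first record the underived-equals-derived fact for flat rank-one sheaves, then deduce the equivalence, then run the $\D_X$-symmetric argument for the two halves of the perverse t-structure.
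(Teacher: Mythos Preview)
Your proposal is correct and matches the paper's proof essentially line for line: equivalence via the dual $L^{-1}$, preservation of ${}^pD^{\le 0}$ by the support computation $\Supp\cH^n(K\otimes^L L)=\Supp\cH^n(K)$, and preservation of ${}^pD^{\ge 0}$ via the duality identity $\D_X(K\otimes^L L)\cong L^{-1}\otimes^L\D_X K$. The paper cites \cite[II, Cor.~7.5~f)]{kiehl2001weil} for that last isomorphism, which you may want to invoke explicitly in place of the phrase ``standard compatibility.''
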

\begin{proof}
	Let $L^{-1}$ be the  lisse sheaf dual to $L$. By associativity of the derived tensor product $\otimes^L$, the pair of functors $(-\otimes^LL,-\otimes^LL^{-1})$ is an equivalence.
\begin{enumerate}\item\label{it:twistLright} Right t-exactness:   The functor is t-exact for the standard t-structures. Thus, for every $K\in {}^pD^{\le0}(X)$ and every integer $n$, one has $\cH^n(K\otimes^LL)=\cH^n(K)\otimes^LL$. Therefore, one has $\Supp \cH^n(K\otimes^LL)=\Supp \cH^n(K)$. Thus, $K\otimes^LL\in  {}^pD^{\le0}(X)$. 

\item Left t-exactness:  By Part \ref{it:twistLright}, for every $K\in {}^pD^{\ge0}(K)$, one has  $L^{-1}\otimes^L\D_XK\in{}^pD^{\le0}(X)$. By \cite[II, Cor.~7.5~f)]{kiehl2001weil},  one has isomorphisms \[\D_X(K\otimes^LL)\to \RHom(L,\D_XK)\to L^{-1}\otimes^L\D_XK\] in $D_c^b(X)$. Therefore, one gets $K\otimes^LL\in  {}^pD^{\ge0}(X)$.\end{enumerate}
\end{proof}
\subsection{Universal local acyclicity}\label{sec:ULA}
 In Section \ref{sec:ULA}, all schemes are assumed to be quasi-compact and quasi-separated. For a scheme $X$ and a geometric point $\bar{x}$ on $X$,  denote by $O_{X,\bar{x}}^{\sh}$ the strict henselization (in the sense of \cite[\href{https://stacks.math.columbia.edu/tag/04GQ}{Tag 04GQ (3)}]{stacks-project}) of $O_{X,\bar{x}}$. Set $X_{(\bar{x})}:=\Spec O_{X,\bar{x}}^{\sh}$.

Let $f:X\to S$ be a separated morphism of finite presentation of schemes over $\Z[1/\ell]$.
\begin{df}[{\cite[\href{https://stacks.math.columbia.edu/tag/0GJM}{Tag 0GJM}]{stacks-project}, \cite[Def.~1.2]{barrett2023singular}}]\label{df:ULA}\index{universally locally acyclic}
	Let $K$ be an object of $D_c^b(X)$.
	\begin{itemize}
		\item If for every geometric point $\bar{x}$ on $X$ and every geometric point $\bar{t}$ on $S_{(\bar{s})}$ with $\bar{s}=f(\bar{x})$, the canonical morphism \[R\Gamma(X_{(\bar{x})},K)\to R\Gamma(X_{(\bar{x})}\times_{S_{(\bar{s})}}\bar t,K)\] is an isomorphism, then $K$ is called \emph{$f$-locally acyclic}.
		\item If for every morphism $S'\to S$ of schemes, in notation of the cartesian square  \begin{equation}\label{tik:cartesian}
			\begin{tikzcd}
				X' \arrow[r, "g'"] \arrow[d, "f'"'] \arrow[rd, "\square", phantom] & X \arrow[d, "f"] \\
				S' \arrow[r, "g"']                                                 & S               
			\end{tikzcd}
		\end{equation} $g'^*K$ is $f'$-locally acyclic, then $K$ is called \emph{$f$-universally locally acyclic} ($f$-ULA). Let $D^{\ULA}(X/S)\subset D_c^b(X)$ be the full subcategory of $f$-ULA objects.\end{itemize}
\end{df}
By \cite[Thm.~4.4]{hansen2023relative}, an object $K\in D_c^b(X)$ is $f$-ULA if and only if $K$ is  universally locally acyclic in the sense of \cite[Def.~3.2]{hansen2023relative}. Thus, the notation $D^{\ULA}(X/S)$ agrees with that in \cite{hansen2023relative}. It is a triangulated subcategory of $D_c^b(X)$. 
\begin{ft}\label{ft:ULA}
\hfill\begin{enumerate}\item\label{it:ULAoverfld} \textup{(\cite[Lem.~3.7~(ii)]{barrett2023singular})} If $S=\Spec k$, then $D^{\ULA}(X/k)=D_c^b(X)$.
	\item\label{it:ULAoverid} \textup{(\cite[Lem.~3.7~(i)]{barrett2023singular})} If $f:X\to S$ is an isomorphism, then $D^{\ULA}(X/S)\subset D_c^b(X)$ is the full subcategory  of objects whose cohomology sheaves are lisse.
\item\label{it:basechgULA} \textup{(\cite[Prop.~3.4 (i)]{hansen2023relative})} Let $g:S'\to S$ be a morphism of schemes over $\Z[1/\ell]$. Then in the notation of \eqref{tik:cartesian}, the functor $g'^*:D_c^b(X)\to D_c^b(X')$ restricts to a functor $D^{\ULA}(X/S) \to D^{\ULA}(X'/S')$.
\item \label{it:properULA} \textup{(\cite[Lem.~3.15]{richarz2014new}, \cite[Lem.~3.6 (i), (ii)]{barrett2023singular})} Let $f':Y\to S$ be a separated morphism of finite presentation of schemes over $\Z[1/\ell]$. Let $h:X\to Y$ be a morphism of schemes over $S$. If $h$ is smooth (resp. proper), then the functor $h^*:D_c^b(Y)\to D_c^b(X)$ (resp. $Rh_*:D_c^b(X)\to D_c^b(Y)$) restricts to a functor $D^{\ULA}(Y/S)\to D^{\ULA}(X/S)$  (resp. $D^{\ULA}(X/S)\to D^{\ULA}(Y/S)$).
\item\label{it:composeULA} \textup{(\cite[p.643]{hansen2023relative})} Let $g:S\to T$ be a smooth morphism of schemes over $\Z[1/\ell]$. Then $D^{\ULA}(X/S)\subset D^{\ULA}(X/T)$. 
\item\label{it:externaltensorULA} \textup{(\cite[Thm.~A.2.5 (4)]{zhu2016introduction})} For $i=1,2$, let $f_i:X_i\to S$  be a separated morphism of finite presentation of schemes over $\Z[1/\ell]$, and let $K_i\in D^{\ULA}(X_i/S)$. Then $K_1\boxtimes_SK_2\in D^{\ULA}(X_1\times_SX_2/S)$.\end{enumerate}
\end{ft}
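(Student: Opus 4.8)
The plan is to reduce each of the six items to the reference cited beside it, after unwinding Definition \ref{df:ULA}; since Fact \ref{ft:ULA} is a compilation of standard properties of universal local acyclicity, the work is organizational rather than novel. I would begin by recalling, as the excerpt already notes via \cite[Thm.~4.4]{hansen2023relative}, that $D^{\ULA}(X/S)$ coincides with the universally locally acyclic objects of \cite{hansen2023relative}, so that the more flexible characterizations there and in \cite{barrett2023singular} are at my disposal. The six statements split into three that are formal consequences of the definition and three that require a genuine acyclicity input.

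For the formal ones: in \ref{it:ULAoverfld}, if $S=\Spec k$ then for each geometric point $\bar s$ the scheme $S_{(\bar s)}$ is the spectrum of a separably closed field, so its only geometric point $\bar t$ gives $X_{(\bar x)}\times_{S_{(\bar s)}}\bar t = X_{(\bar x)}$ and the comparison morphism of Definition \ref{df:ULA} is an isomorphism, and the same holds after any base change. In \ref{it:ULAoverid}, when $f$ is an isomorphism one identifies $X_{(\bar x)}$ with $S_{(\bar s)}$, so $R\Gamma(X_{(\bar x)},K)$ is the stalk $K_{\bar x}$ and $R\Gamma(X_{(\bar x)}\times_{S_{(\bar s)}}\bar t,K)$ is the stalk at $\bar t$; the condition then says every cospecialization map of $K$ is an isomorphism, which for constructible $K$ is equivalent to lisseness of all $\cH^n K$, and this is visibly stable under base change. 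In \ref{it:basechgULA}, $f$-ULA is by construction tested against all $S'\to S$ and a base change of a base change is a base change, so there is nothing to prove.

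The remaining three rest on real theorems. For \ref{it:properULA}: the smooth-pullback half is smooth base change, and the proper-pushforward half uses proper base change together with the commutation of $Rh_*$ with the formation of nearby/vanishing cycles for proper $h$; both are available in \cite{richarz2014new} and \cite{barrett2023singular}. For \ref{it:composeULA}: this is the relative form of Deligne's local acyclicity of smooth morphisms, whereby the comparison morphism over $T$ factors as the comparison over $S$ (an isomorphism by hypothesis) composed with the one governed by the smoothness of $S\to T$ (an isomorphism by that theorem). For \ref{it:externaltensorULA}: one invokes the relative Künneth formula, which expresses the cohomology of $X_{1(\bar x_1)}\times_S X_{2(\bar x_2)}$ as a tensor product over the base of the two factors' cohomologies, each of which is unchanged upon restriction to a geometric fibre of $S_{(\bar s)}$ by the ULA hypothesis on $K_1$ and $K_2$; cf.\ \cite[Thm.~A.2.5 (4)]{zhu2016introduction}.

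The main obstacle — indeed the only one — is that \ref{it:properULA} and \ref{it:composeULA} are not elementary: they sit downstream of proper base change and of the local acyclicity of smooth morphisms, so a fully self-contained account would have to quote or reprove those foundational results. Since the excerpt permits appealing to external and earlier results, I would simply cite \cite{richarz2014new}, \cite{barrett2023singular}, \cite{hansen2023relative} and \cite{zhu2016introduction} exactly as indicated in the statement of Fact \ref{ft:ULA}, having first established the compatibility of the two notions of ULA.
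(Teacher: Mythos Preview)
The paper does not prove Fact \ref{ft:ULA} at all: it is stated as a \emph{Fact} with a citation attached to each item, and no argument is given in the text. Your proposal therefore goes beyond what the paper does, supplying brief justifications where the paper simply points to the literature. The sketches you give for the six items are accurate and match the standard arguments one finds in the cited sources, so there is no gap; but since the paper's ``proof'' is purely by citation, the most faithful comparison is that your write-up is an expanded gloss on those references rather than an alternative route.
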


\begin{lm}\label{lm:ULAdeterminedbygene}
	Assume that $S$ is Noetherian, irreducible with generic point $\eta$. Let $K\in D^{\ULA}(X/S)$. If $K|_{X_{\bar{\eta}}}=0$ in $D_c^b(X_{\bar{\eta}})$, then $K=0$.
\end{lm}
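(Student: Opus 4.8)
The plan is to reduce the statement to the vanishing of all geometric stalks of $K$, and then to extract that vanishing from the local acyclicity that is already part of the ULA hypothesis. Concretely, a complex in $D_c^b(X)$ all of whose geometric stalks vanish has all cohomology sheaves zero, hence is the zero object; so it suffices to prove $K_{\bar x}=0$ for every geometric point $\bar x$ of $X$. First I would record that $K|_{X_{\eta}}=0$ as well: the morphism $X_{\bar\eta}\to X_{\eta}$ is surjective, so its pullback functor is conservative on constructible complexes, and $K|_{X_{\bar\eta}}=0$ by hypothesis.

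Now fix a geometric point $\bar x$ of $X$ and put $\bar s=f(\bar x)$, so that $S_{(\bar s)}=\Spec O_{S,\bar s}^{\sh}$. Since $O_{S,s}\to O_{S,\bar s}^{\sh}$ is faithfully flat, the morphism $S_{(\bar s)}\to S$ is surjective onto the set of generizations of $s$; as $S$ is irreducible, $\eta$ is one of them, so there is a point of $S_{(\bar s)}$ lying over $\eta$, above which I pick a geometric point $\bar t$. Taking $S'=S$ in Definition \ref{df:ULA}, $K$ is in particular $f$-locally acyclic, so the canonical morphism
\[
R\Gamma(X_{(\bar x)},K)\longrightarrow R\Gamma\bigl(X_{(\bar x)}\times_{S_{(\bar s)}}\bar t,\ K\bigr)
\]
is an isomorphism. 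The source is the derived stalk $K_{\bar x}$, because $X_{(\bar x)}$ is strictly henselian local. The target vanishes: the morphism $X_{(\bar x)}\times_{S_{(\bar s)}}\bar t\to X$ factors through $X\times_S\bar t$, which in turn maps to $X_{\eta}$ because $\bar t$ lies over $\eta$; since $K|_{X_{\eta}}=0$, the pullback of $K$ to $X_{(\bar x)}\times_{S_{(\bar s)}}\bar t$ is already $0$. Hence $K_{\bar x}=0$, and as $\bar x$ was arbitrary we conclude $K=0$.

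I do not expect a serious obstacle here; the argument is essentially an unwinding of the definition. The two points that require a little care are the existence of a point of the strictly henselian scheme $S_{(\bar s)}$ lying over the generic point $\eta$ — which is exactly where irreducibility of $S$ enters — and the standard identification of $R\Gamma(X_{(\bar x)},-)$ with the stalk functor at $\bar x$. I also note that the proof uses only that $K$ is $f$-locally acyclic, which is weaker than being $f$-ULA; the full ULA hypothesis is what makes $D^{\ULA}(X/S)$ the natural ambient category in the sequel, but it is not needed for this particular statement.
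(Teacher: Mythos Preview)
Your proof is correct and more direct than the paper's. The paper reduces to showing $K|_{X_{\bar s}}=0$ for every geometric point $\bar s$ of $S$; it then chooses a discrete valuation ring $R$ with a morphism $\Spec(R)\to S$ sending the generic (resp.\ closed) point to $\eta$ (resp.\ $s$), passes to the henselization, and argues via the exact triangle $K'|_{X'_{\bar r}}\to R\Psi(K')\to R\Phi(K')\overset{+1}{\to}$ of nearby and vanishing cycles: $R\Psi(K')=0$ because $K$ vanishes over $\bar\eta$, and $R\Phi(K')=0$ because ULA complexes have no vanishing cycles, whence $K'|_{X'_{\bar r}}=0$.

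You bypass this machinery entirely by reading off the vanishing of each stalk $K_{\bar x}$ directly from the defining isomorphism of local acyclicity in Definition~\ref{df:ULA}, choosing the test geometric point $\bar t$ on $S_{(\bar s)}$ to lie over $\eta$. This is shorter and conceptually cleaner; as you note, it also shows that only local acyclicity (not the universal strengthening) is actually used. The paper's detour through vanishing cycles has the virtue of linking the statement to the classical interpretation of local acyclicity, but for the purpose at hand your argument is preferable.
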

\begin{proof}	It suffices to prove that for every geometric point $\bar s$ on $S$, one has $K|_{X_{\bar{s}}}=0$ in $D_c^b(X_{\bar{s}})$. By \cite[Prop.~7.1.9]{EGAII}, as $S$ is Noetherian, there is a discrete valuation ring $R$ and a separated morphism $g:\Spec(R)=S'\to S$, sending the generic (resp. closed) point $\xi$ (resp. $r$) of $S'$ to $\eta$ (resp. $s$). Let $i:R\to R^h$ be the henselization of $R$ (in the sense of \cite[\href{https://stacks.math.columbia.edu/tag/04GQ}{Tag 04GQ (1)}]{stacks-project}). By \cite[\href{https://stacks.math.columbia.edu/tag/0AP3}{Tag 0AP3}]{stacks-project}, $R^h$ is a discrete valuation ring. From \cite[I, Exercise 4.9]{milne2016etale}, the local morphism $i:R\to R^h$ is injective. Then $i^*:\Spec(R^h)\to S'$ preserves the generic (resp. closed) point. Replacing $R$ by $R^h$, one may assume further that $R$ is henselian. 
	
	 Consider the following cartesian squares \begin{center}
		\begin{tikzcd}
			X'_{\bar{r}} \arrow[r, "\bar{i}"] \arrow[d] \arrow[rd, "\square", phantom] & X'_{(\bar{r})} \arrow[d] \arrow[rd, "\square", phantom] & X'_{\bar{\xi}} \arrow[l, "\bar{j}"'] \arrow[d] \\
			\bar{r} \arrow[r]                                                         & S'_{(\bar{r})}                                          & \bar{\xi}, \arrow[l]                          
		\end{tikzcd}
	\end{center}where every vertical morphism is a base change of $f:X\to S$. In the notation of \eqref{tik:cartesian},  let $R\Phi:D^+(X')\to D^+(X'_{\bar{r}})$ be the vanishing cycle functor. Let $R\Psi:D^+(X')\to D^+(X'_{\bar{r}})$ be the nearby cycle functor. 
Set $K'=g'^*K\in D_c^b(X')$.  By definition, one has $R\Psi(K')=\bar{i}^*R\bar{j}_*(K'|_{X'_{\bar{\xi}}})$.  From \cite[(1.1.3)]{illusie2006vanishing}, as $R$  is henselian, there is a natural exact triangle $K'|_{X'_{\bar{r}}} \to R\Psi (K') \to R\Phi (K')\overset{+1}{\to}$ in $D^+(X'_{\bar{r}})$. Since $K'|_{X'_{\bar{\xi}}}$ is a pullback of $K|_{X_{\bar{\eta}}}=0$, one has $K'|_{X'_{\bar{\xi}}}=0$ and hence $R\Psi (K')=0$. By \cite[Cor.~3.5]{illusie2006vanishing}, the universal local acyclicity of $K$ implies $R\Phi (K')=0$. Therefore, one gets $K'|_{X'_{\bar{r}}} =0$. 

By Lemma \ref{lm:pervext} \ref{it:JKLMA1}, since $K'|_{X'_{\bar{r}}} $ is the pullback of $K|_{X_{\bar{s}}}$ under the field extension $k(\bar{r})/k(\bar{s})$,  one gets $K|_{X_{\bar{s}}}=0$.
			\end{proof}
\subsection{Relative perverse sheaves}
Let $f:X\to S$ be a morphism of algebraic varieties over a field $k$.  In particular, $f$ is separated and of finite presentation. As $S$ is \emph{bon} in the sense of \cite[(1.0)]{katz1985transformation}, one can consider $K_{X/S}:=Rf^!\bar{\Q}_{\ell}\in D_c^b(X)$ the relative dualizing complex. The  functor \[\D_{X/S}(-)=\RHom(-,K_{X/S}):D_c^b(X)\to D_c^b(X)^{\op}\] is called the \emph{relative Verdier dual}\index{relative Verdier dual functor}. By \cite[(1.1.5)]{katz1985transformation}, as $S$ is bon, there is a canonical morphism of functors $\Id_{D_c^b(X)}\to \D_{X/S}\circ \D_{X/S}$. 
Fact \ref{ft:relperv} is stated for $\infty$-categories in \cite{hansen2023relative}, but holds for the underlying triangulated categories (described in \cite[Lem.~7.9]{hemo2023constructible}) by \cite[Footnote 1]{hansen2023relative}. \begin{ft}\label{ft:relperv}\index{relative perverse sheaves}
\hfill\begin{enumerate}	
	\item\label{it:onDcb} \textup{(\cite[Thm.~1.1]{hansen2023relative})} There is a unique t-structure $({}^{p/S}D^{\le0}(X/S),{}^{p/S}D^{\ge0}(X/S))$ on $D_c^b(X)$, called the relative perverse t-structure, with the following property: An object $K\in D_c^b(X)$ lies in ${}^{p/S}D^{\le0}(X/S)$ (resp. ${}^{p/S}D^{\ge0}(X/S)$) if and only if for every geometric point $\bar{s}\to S$, the restriction $K|_{X_{\bar{s}}}$ lies in ${}^pD^{\le0}(X_{\bar{s}})$ (resp. ${}^pD^{\ge0}(X_{\bar{s}})$). In particular, for every $s\in S$, the  functor $(-)|_{X_s}:D_c^b(X)\to D_c^b(X_s)$ is  t-exact, where the source (resp. target) is equipped with the relative (resp. absolute) perverse t-structure. 
 	\item\label{it:resrelt} \textup{(\cite[Thm.~1.9]{hansen2023relative})} The relative perverse t-structure on $D_c^b(X)$ restricts to a t-structure $( {}^{p/S}D^{\ULA,\le0}(X/S) ,{}^{p/S}D^{\ULA,\ge0}(X/S) )$ on $D^{\ULA}(X/S)$.
	\item\label{it:relVerdier} \textup{(\cite[Prop.~3.4]{hansen2023relative})} The functor $\D_{X/S}$ preserves $D^{\ULA}(X/S)$, and the morphism $\Id_{D^{\ULA}(X/S)}\to \D_{X/S}\circ \D_{X/S}$ of functors $D^{\ULA}(X/S)\to D^{\ULA}(X/S)$  is an isomorphism. The formation of $\D_{X/S}:D^{\ULA}(X/S)\to D^{\ULA}(X/S)^{\op}$ commutes with any base change in $S$, so $\D_{X/S}$ exchanges ${}^{p/S}D^{\ULA,\le0}(X/S)$ with ${}^{p/S}D^{\ULA,\ge0}(X/S)$.
\end{enumerate}
	\end{ft}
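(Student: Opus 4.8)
The plan is to obtain all three parts from the classical perverse t-structure on the fibres by Noetherian induction on $\dim S$, the inductive step being an open--closed gluing over the base. Write ${}^{p/S}D^{\le0}$ and ${}^{p/S}D^{\ge0}$ for the full subcategories of $D_c^b(X)$ cut out by the fibrewise conditions of \ref{it:onDcb}. Since each restriction $(-)|_{X_{\bar s}}$ is triangulated and commutes with shifts, and each $({}^pD^{\le0}(X_{\bar s}),{}^pD^{\ge0}(X_{\bar s}))$ is a t-structure, the pair $({}^{p/S}D^{\le0},{}^{p/S}D^{\ge0})$ automatically contains $0$ and is stable under $[1]$, $[-1]$ respectively; by the usual criterion it remains to prove (i) $\Hom_{D_c^b(X)}(A,B)=0$ for $A\in{}^{p/S}D^{\le0}$, $B\in{}^{p/S}D^{\ge1}$, and (ii) that every $K$ fits into a triangle $A\to K\to B\xrightarrow{+1}$ with $A\in{}^{p/S}D^{\le0}$, $B\in{}^{p/S}D^{\ge1}$; the heart is then described tautologically, and $(-)|_{X_s}$ is t-exact because the conditions are imposed fibrewise. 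For $\dim S=0$ this is the classical statement applied to each component. For the inductive step I would choose a dense open $j\colon U\hookrightarrow S$ with closed complement $i\colon Z\hookrightarrow S$, $\dim Z<\dim S$, and form the recollement attached to $X_U=X\times_SU\overset{j_X}{\hookrightarrow}X\overset{i_X}{\hookleftarrow}X_Z=X\times_SZ$; by induction the relative perverse t-structures $t_U$ on $D_c^b(X_U)$ and $t_Z$ on $D_c^b(X_Z)$ already exist. The goal is then to show that $t_U$ and $t_Z$ glue, via \cite[Thm.~1.4.10]{beilinson2018faisceaux}, to a t-structure on $D_c^b(X)$ equal to $({}^{p/S}D^{\le0},{}^{p/S}D^{\ge0})$, which settles (i) and (ii) at once.

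The $\le0$-half $\{K:j_X^*K\in t_U^{\le0},\ i_X^*K\in t_Z^{\le0}\}$ of the glued t-structure equals ${}^{p/S}D^{\le0}$ on the nose, because $t_U^{\le0}={}^{p/U}D^{\le0}(X_U/U)$ and $t_Z^{\le0}={}^{p/Z}D^{\le0}(X_Z/Z)$ by the inductive hypothesis, and for $\bar s$ over $U$ (resp. over $Z$) one has $K|_{X_{\bar s}}=(j_X^*K)|_{X_{\bar s}}$ (resp. $=(i_X^*K)|_{X_{\bar s}}$). The $\ge0$-half, however, is $\{K:j_X^*K\in t_U^{\ge0},\ i_X^!K\in t_Z^{\ge0}\}$, whereas ${}^{p/S}D^{\ge0}$ asks for $j_X^*K\in t_U^{\ge0}$ and $i_X^*K\in t_Z^{\ge0}$. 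By the canonical triangle $i_X^!K\to i_X^*K\to i_X^*Rj_{X*}j_X^*K\xrightarrow{+1}$ the $i_X^!$- and $i_X^*$-conditions are equivalent once one knows that $i_X^*Rj_{X*}$ sends $t_U^{\ge0}$ into $t_Z^{\ge0}$, i.e.\ that $i_X^*Rj_{X*}$ is left t-exact for the relative perverse t-structures --- a relative analogue of the t-exactness of nearby cycles. This is the crux. To prove it I would run a secondary Noetherian induction on $S$: after shrinking, $U$ is regular of pure dimension $d$ and the given object is universally locally acyclic over $U$ (Deligne's generic local acyclicity), $Z$ is regular, and on $D^{\ULA}(X_U/U)$ the relative perverse t-structure is the absolute one shifted by $[-d]$ --- the ordinary cohomology sheaves of a ULA object have supports equidimensional over the regular base, so the fibrewise support bounds match the absolute ones after this shift, and the two $\ge$-halves are matched by part \ref{it:relVerdier} below. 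The problem thus becomes a statement about the behaviour of a ULA sheaf along the regular locus $Z\subset S$, governed by Gabber's absolute purity: on $f$-ULA input, $i_X^!$ differs from $i_X^*$ by a shift and a Tate twist read off from the codimension of $Z$ in $S$, and balancing this against the discrepancy between the local models over $U$ and over $Z$ yields exactly the claimed left t-exactness; one then spreads the conclusion back out over $S$ with the help of Lemma \ref{lm:ULAdeterminedbygene}.

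For part \ref{it:resrelt} it suffices that the relative truncations ${}^{p/S}\tau^{\le n}$ and ${}^{p/S}\tau^{\ge n}$ preserve $D^{\ULA}(X/S)$. Being built stratum by stratum over $S$ out of the recollement functors $j_{X!}$, $Rj_{X*}$, $i_{X*}$, which preserve $D^{\ULA}$ by Fact \ref{ft:ULA}\ref{it:basechgULA},\ref{it:properULA}, and out of the absolute truncations over the strata, and since $D^{\ULA}(X/S)$ is triangulated, this reduces to the fact that the absolute perverse truncation of a ULA object over a regular stratum is ULA --- true because there the relative truncation is a shift of the absolute one. For part \ref{it:relVerdier}, biduality is checked fibrewise: as $f$ is of finite presentation and $S$ is bon, $K_{X/S}$ is compatible with base change in $S$, so for $K\in D^{\ULA}(X/S)$ and any geometric $\bar s\to S$ one gets $(\D_{X/S}K)|_{X_{\bar s}}\simeq\D_{X_{\bar s}}(K|_{X_{\bar s}})$ --- ULA-ness lets $\RHom(-,K_{X/S})$ commute with $(-)|_{X_{\bar s}}$ --- whence $\D_{X/S}K\in D^{\ULA}(X/S)$; the canonical map $\Id\to\D_{X/S}\circ\D_{X/S}$ restricts on each fibre to the absolute biduality isomorphism, and Lemma \ref{lm:ULAdeterminedbygene} (over a discrete valuation base, as in its proof) promotes this to an isomorphism over $X$. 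Since $\D_{X_{\bar s}}$ interchanges ${}^pD^{\le0}(X_{\bar s})$ and ${}^pD^{\ge0}(X_{\bar s})$, the functor $\D_{X/S}$ interchanges ${}^{p/S}D^{\ULA,\le0}(X/S)$ and ${}^{p/S}D^{\ULA,\ge0}(X/S)$, and compatibility with arbitrary base change in $S$ is inherited from that of $K_{X/S}$.

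The main obstacle is the left t-exactness of $i_X^*Rj_{X*}$ for the relative perverse t-structures, on which the gluing in part \ref{it:onDcb} rests: the relative perverse t-structure is \emph{not} a naive recollement, since its $\ge$-half over the closed stratum is an $i^*$-condition rather than the $i^!$-condition fed to \cite[Thm.~1.4.10]{beilinson2018faisceaux}, so one is forced to reconcile the two by tracking codimension shifts between the local models on the strata and invoking absolute purity together with generic local acyclicity. Everything else in the argument is either formal or checked fibrewise.
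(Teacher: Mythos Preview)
The paper does not prove this statement at all. It is recorded as a \emph{Fact} and attributed entirely to Hansen--Scholze \cite{hansen2023relative}; the only gloss in the paper is the sentence preceding the statement, explaining that the $\infty$-categorical results of \cite{hansen2023relative} descend to the triangulated level via \cite[Footnote~1]{hansen2023relative} and \cite[Lem.~7.9]{hemo2023constructible}. So there is nothing to compare your proposal against in this paper.

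That said, your sketch is broadly in the spirit of the actual Hansen--Scholze argument: Noetherian induction on the base, gluing \emph{via} recollement, and the identification of the crux as the (relative) t-exactness of nearby cycles --- this is exactly the hard input in \cite{hansen2023relative}, and you are right that the glued $\ge0$-half carries an $i^!$-condition which must be reconciled with the fibrewise $i^*$-condition defining ${}^{p/S}D^{\ge0}$.

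However, your treatment of part~\ref{it:resrelt} has a genuine gap. You claim that $j_{X!}$, $Rj_{X*}$, $i_{X*}$ preserve $D^{\ULA}$ ``by Fact~\ref{ft:ULA}\ref{it:basechgULA},\ref{it:properULA}'', but those items only give preservation under smooth \emph{pullback} and \emph{proper} pushforward. The open immersion $j_X$ is not proper, and extension by zero along it does \emph{not} preserve ULA relative to $S$ in general: already for $f=\Id_S$ with $S=\mathbf{A}^1_k$ and $U$ the complement of the origin, $j_!\bar{\Q}_{\ell,U}$ is not lisse, hence not ULA over $S$ by Fact~\ref{ft:ULA}\ref{it:ULAoverid}. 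The same obstruction applies to $Rj_{X*}$. In \cite{hansen2023relative} the preservation of ULA under relative perverse truncation is obtained by a different route (the Lu--Zheng dualizability criterion for ULA), not by decomposing the truncation into recollement functors. So while your overall architecture for part~\ref{it:onDcb} is sound as a sketch, your argument for part~\ref{it:resrelt} does not go through as written.
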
	
\begin{df}\label{df:relperv}Let $\Perv(X/S)$ (resp. $\Perv^{\ULA}(X/S)$) be the heart of the relative perverse t-structure on $D_c^b(X)$ (resp. $D^{\ULA}(X/S)$).\end{df} By Fact \ref{ft:relperv} \ref{it:onDcb}, an object $K\in D_c^b(X)$ lies in $\Perv(X/S)$ if and only if for every geometric point $\bar{s}\to S$, one has $K|_{X_{\bar{s}}}\in \Perv(X_{\bar{s}})$.
\begin{eg}\label{eg:relperv}\hfill\begin{enumerate}\item\label{it:extreme} \textup{(\cite[p.632]{hansen2023relative})} If $S=\Spec(k)$, then $\Perv(X/k)=\Perv(X)$.
		\item  If $f:X\to S$ is universally injective, then $\Perv(X/S)=\Cons(X)$.
\item \textup{(\cite[Lem.~3.7 (ii)]{barrett2023singular})} If $f:X\to S$ is smooth of relative dimension $r$, then the functor $(-)[r]:\Loc(X)\to D_c^b(X)$ factors through $\Perv^{\ULA}(X/S)$.\end{enumerate}\end{eg}

\begin{eg}\label{eg:subisULA}
	Let $i:Y\to X$ be a closed immersion of schemes over $S$. Assume that $Y\to S$ is smooth of relative dimension $d$ and with geometrically connected fibers. If $L$ is a $\bar{\Q}_{\ell}$-lisse sheaf on $Y$, then $i_*L[d]\in\Perv^{\ULA}(X/S)$.
	
	Indeed, by Fact \ref{ft:ULA} \ref{it:ULAoverid}, one has $L\in D^{\ULA}(Y/Y)$. From the smoothness of $Y\to S$ and Fact \ref{ft:ULA} \ref{it:composeULA}, one has $L\in D^{\ULA}(Y/S)$. Using the properness of $i:Y\to X$ and Fact \ref{ft:ULA} \ref{it:properULA}, one has $i_*L[d]\in D^{\ULA}(X/S)$. For every geometric point $\bar{s}\to S$, let $i_{\bar{s}}:Y_{\bar{s}}\to X_{\bar{s}}$ be the base change of $i$ along the morphism $X_{\bar{s}}\to X$. By the proper base change theorem, one has $i_*L[d]|_{X_{\bar{s}}}=(i_{\bar{s}})_*(L|_{Y_{\bar{s}}})[d]$ which is in $\Perv(X_{\bar{s}})$. Therefore, $i_*L[d]\in \Perv^{\ULA}(X/S)$.
\end{eg}\begin{lm}
If $S$ is geometrically unibranch and irreducible, then  $\Perv^{\ULA}(X/S)$ is a Serre subcategory of $\Perv(X/S)$. 
\end{lm}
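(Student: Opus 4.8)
The plan is to verify the three closure properties defining a Serre subcategory — under extensions, under subobjects and under quotients — using throughout that, by Fact \ref{ft:relperv} \ref{it:resrelt}, the relative perverse t-structure on $D_c^b(X)$ restricts to $D^{\ULA}(X/S)$, so that $\Perv^{\ULA}(X/S)=\Perv(X/S)\cap D^{\ULA}(X/S)$, and that $D^{\ULA}(X/S)$ is a full triangulated subcategory of $D_c^b(X)$.

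Two of the three properties are formal. If $0\to K'\to K\to K''\to 0$ is exact in $\Perv(X/S)$, it underlies an exact triangle $K'\to K\to K''\xrightarrow{+1}$ in $D_c^b(X)$; if $K'$ and $K''$ lie in $D^{\ULA}(X/S)$ then so does $K$, and since $K\in\Perv(X/S)$ we get $K\in\Perv^{\ULA}(X/S)$, giving closure under extensions. Reading the same triangle for a subobject $K'\subset K$ with $K\in\Perv^{\ULA}(X/S)$ shows that $K'\in D^{\ULA}(X/S)$ if and only if $K''=K/K'\in D^{\ULA}(X/S)$; hence closure under subobjects is equivalent to closure under quotients, and it suffices to prove closure under subobjects.

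For this, write $\eta$ for the generic point of $S$ (legitimate, $S$ being a Noetherian irreducible variety). Let $K\in\Perv^{\ULA}(X/S)$ and let $K'\subset K$ be a subobject in $\Perv(X/S)$. By generic universal local acyclicity (a standard spreading-out result applied to $K'\in D_c^b(X)$) there is a nonempty open $U\subset S$ with $K'|_{X_U}\in D^{\ULA}(X_U/U)$, where $X_U:=f^{-1}(U)$; since restriction along the open immersion $X_U\hookrightarrow X$ is t-exact for the relative perverse t-structures (one checks this on geometric fibres), $K'|_{X_U}\subset K|_{X_U}$ is then a subobject in $\Perv^{\ULA}(X_U/U)$, with $K|_{X_U}\in\Perv^{\ULA}(X_U/U)$ by Fact \ref{ft:ULA} \ref{it:basechgULA} (over the field $k(\eta)$ this is automatic by Fact \ref{ft:ULA} \ref{it:ULAoverfld}). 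It remains to propagate universal local acyclicity of the subobject from $X_U$ to all of $X$, and here the hypothesis that $S$ is geometrically unibranch and irreducible enters. Over such a base the restriction functor $\Perv^{\ULA}(X/S)\to\Perv^{\ULA}(X_U/U)$ should be fully faithful — a rigidity statement in the spirit of the surjectivity of $\pet$ for normal schemes used in Lemma \ref{lm:shriksimple}, sharpened by Lemma \ref{lm:ULAdeterminedbygene} — and one expects moreover that the subobject $K'|_{X_U}$ of $K|_{X_U}$ extends to a subobject $\widetilde K\subset K$ lying in $\Perv^{\ULA}(X/S)$. Granting this, $K'$ and $\widetilde K$ are two subobjects of $K$ in $\Perv(X/S)$ with $K'|_{X_U}=\widetilde K|_{X_U}$, so the relative perverse subquotient $(K'+\widetilde K)/(K'\cap\widetilde K)$ of $K$ has zero restriction to $X_{\bar\eta}$; a dévissage reduces its vanishing to Lemma \ref{lm:ULAdeterminedbygene}, whence $K'=\widetilde K\in\Perv^{\ULA}(X/S)$.

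The main obstacle is exactly this propagation step: proving that, over a geometrically unibranch irreducible base, a subobject in $\Perv(X/S)$ of a universally locally acyclic relative perverse sheaf which is universally locally acyclic over a dense open is universally locally acyclic over all of $S$. This is where the geometric unibranch hypothesis is genuinely used (it fails for an arbitrary base), and where Lemma \ref{lm:ULAdeterminedbygene} is the crucial input, turning ``agreeing over a dense open'' into ``equal'' for universally locally acyclic objects; the generic spreading-out of the third paragraph and the formal arguments of the second are routine by comparison.
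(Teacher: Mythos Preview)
Your formal arguments for closure under extensions, and the reduction of subobjects to quotients, match the paper. For closure under subobjects the paper simply cites the proof of \cite[Thm.~6.8~(ii)]{hansen2023relative}; all the substance lives in Hansen--Scholze, and the lemma here is really a citation.

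Your attempt to unpack this has a gap beyond the one you already flag. The propagation step (producing $\widetilde K\subset K$ in $\Perv^{\ULA}(X/S)$ restricting to $K'|_{X_U}$) is, as you say, the crux; it is essentially Fact~\ref{ft:relULAingeneric}, which rests on \cite[Thm.~1.10~(ii)]{hansen2023relative} --- the same Hansen--Scholze input the paper invokes, so you are not avoiding the black box, only rerouting through it.

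More seriously, even granting $\widetilde K$, your final comparison is circular. You want the relative-perverse subquotient $(K'+\widetilde K)/(K'\cap\widetilde K)$ of $K$ to vanish, and you appeal to Lemma~\ref{lm:ULAdeterminedbygene}. But that lemma requires its input to lie in $D^{\ULA}(X/S)$, and knowing that a relative-perverse subquotient of the ULA object $K$ is itself ULA is exactly the closure property you are proving. No d\'evissage rescues this: absent ULA, a relative perverse sheaf can have zero restriction to $X_{\bar\eta}$ and still be nonzero (anything supported over a proper closed subset of $S$ will do). Compare Lemma~\ref{lm:Scholze}~\ref{it:Scholzekey}, where the analogous step succeeds only because \emph{regularity} of $S$ supplies an independent source of ULA for perverse subquotients via \cite[Cor.~1.12]{hansen2023relative}; no such shortcut is available in the merely geometrically unibranch setting, which is why the paper falls back on \cite[Thm.~6.8~(ii)]{hansen2023relative} directly.
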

\begin{proof}By definition, $\Perv^{\ULA}(X/S)$ is a strictly full subcategory of $\Perv(X/S)$. 
By  Fact \ref{ft:relperv} \ref{it:resrelt} and \cite[Thm.~1.3.6]{beilinson2018faisceaux}, $\Perv^{\ULA}(X/S)$ is an  abelian subcategory of $\Perv(X/S)$ and closed under extensions in $D^{\ULA}(X/S)$. As $D^{\ULA}(X/S)\subset D_c^b(X)$ is a triangulated subcategory, $\Perv^{\ULA}(X/S)$ is closed under extensions in $\Perv(X/S)$. From the proof of \cite[Thm.~6.8 (ii)]{hansen2023relative}, because $S$ is  geometrically unibranch,  $\Perv^{\ULA}(X/S)$ is closed under subquotients in $\Perv(X/S)$. By \cite[\href{https://stacks.math.columbia.edu/tag/02MP}{Tag 02MP}]{stacks-project}, it is a Serre subcategory.
\end{proof}
\begin{ft}[{\cite[Thm.~1.10~(ii)]{hansen2023relative}}]\label{ft:relULAingeneric}Assume that $S$ is geometrically unibranch and irreducible with generic point $\eta$.  Then the functor \[(-)|_{X_{\eta}}:\Perv^{\ULA}(X/S)\to \Perv(X_{\eta})\] is exact and fully faithful, and its essential image is stable under subquotients.
\end{ft}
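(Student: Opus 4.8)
The strategy is the familiar one for such ``comparison at the generic fibre'' statements: exactness and faithfulness follow quickly from results recalled above, while full faithfulness and stability of the essential image under subquotients are both reduced to a single extension statement for subobjects, which is then proved by spreading out together with the fact (already recorded above, and the only place where ``$S$ geometrically unibranch'' enters) that $\Perv^{\ULA}(X/S)$ is a Serre subcategory of $\Perv(X/S)$. For a nonempty open $U\subseteq S$ write $j_U:X_U\hookrightarrow X$ for the open immersion obtained by base change from $U\hookrightarrow S$. By Fact~\ref{ft:relperv}~\ref{it:onDcb} the functor $(-)|_{X_{\eta}}:D_c^b(X)\to D_c^b(X_{\eta})$ is $t$-exact for the relative perverse $t$-structure on the source and the absolute one on the target; hence it carries $\Perv(X/S)$ into $\Perv(X_{\eta})$ and restricts there to an exact functor, and likewise on $\Perv^{\ULA}(X/S)$. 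Since an exact functor between abelian categories that annihilates no nonzero object is faithful, faithfulness follows from the observation that $K|_{X_{\eta}}=0$ forces $K|_{X_{\bar\eta}}=0$, hence $K=0$ by Lemma~\ref{lm:ULAdeterminedbygene}.

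It remains to establish the claim: \emph{for every $K\in\Perv^{\ULA}(X/S)$ and every subobject $N\subseteq K|_{X_{\eta}}$ in $\Perv(X_{\eta})$ there is a subobject $L\subseteq K$ in $\Perv^{\ULA}(X/S)$ with $L|_{X_{\eta}}=N$ inside $K|_{X_{\eta}}$} (such $L$ being unique by faithfulness). Granting this, fullness follows: given $\varphi:K|_{X_{\eta}}\to K'|_{X_{\eta}}$, apply the claim to $K\oplus K'$ and its subobject the graph $\Gamma_{\varphi}$, obtaining $L\subseteq K\oplus K'$ in $\Perv^{\ULA}(X/S)$ with $L|_{X_{\eta}}=\Gamma_{\varphi}$; then $\alpha:L\hookrightarrow K\oplus K'\xrightarrow{\mathrm{pr}_1}K$ restricts on $X_{\eta}$ to the isomorphism $\Gamma_{\varphi}\iso K|_{X_{\eta}}$, so its kernel and cokernel --- objects of $\Perv^{\ULA}(X/S)$ vanishing over $X_{\bar\eta}$ --- are zero by Lemma~\ref{lm:ULAdeterminedbygene}, hence $\alpha$ is invertible and $\bigl(L\hookrightarrow K\oplus K'\xrightarrow{\mathrm{pr}_2}K'\bigr)\circ\alpha^{-1}:K\to K'$ restricts to $\varphi$. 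Stability of the essential image under subquotients follows similarly: for $N_1\subseteq N_2\subseteq K|_{X_{\eta}}$ the claim yields $L_2\subseteq K$ with $L_2|_{X_{\eta}}=N_2$, then (applied to $L_2$) a subobject $L_1\subseteq L_2$ with $L_1|_{X_{\eta}}=N_1$, and $(L_2/L_1)|_{X_{\eta}}=N_2/N_1$ by exactness.

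To prove the claim, spread out. Since $X_{\eta}=\varprojlim_U X_U$ over the nonempty affine opens $U\subseteq S$, with affine transition maps, $D_c^b(X_{\eta})$ is the filtered colimit of the categories $D_c^b(X_U)$; hence the inclusion $N\hookrightarrow K|_{X_{\eta}}$ is the restriction to $X_{\eta}$ of a morphism $\iota_U:N_U\to K|_{X_U}$ in $D_c^b(X_U)$ for some $U$, with $N_U|_{X_{\eta}}\cong N$. Let $g:j_{U!}N_U\to K$ be adjoint to $\iota_U$, and let $L$ be the image, formed in the abelian category $\Perv(X/S)$, of the morphism ${}^{p/S}\cH^{0}(g):{}^{p/S}\cH^{0}(j_{U!}N_U)\to{}^{p/S}\cH^{0}(K)=K$. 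Then $L$ is a subobject of $K$ in $\Perv(X/S)$, hence $L\in\Perv^{\ULA}(X/S)$ because the latter is a Serre subcategory. It remains to compute $L|_{X_{\eta}}$. Restriction along the open immersion $j_U$ is $t$-exact for the relative perverse $t$-structures --- immediate from the fibrewise characterisation in Fact~\ref{ft:relperv}~\ref{it:onDcb}, since the fibres of $X_U\to U$ are among those of $X\to S$ --- and $j_U^{*}j_{U!}=\Id$, so $j_U^{*}g=\iota_U$ and $j_U^{*}L=\im\bigl({}^{p/U}\cH^{0}(\iota_U)\bigr)$. Restricting once more to the fibre $X_{\eta}$ over the generic point of $U$ (again $t$-exact, by Fact~\ref{ft:relperv}~\ref{it:onDcb}) and using that over the field $k(\eta)$ the relative perverse $t$-structure is the absolute one (Example~\ref{eg:relperv}~\ref{it:extreme}), one gets $L|_{X_{\eta}}=\im\bigl({}^{p}\cH^{0}(\iota_U|_{X_{\eta}})\bigr)=\im\bigl(N\hookrightarrow K|_{X_{\eta}}\bigr)=N$, as required.

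The $t$-exactness assertions used here are soft, flowing from the fibrewise description of the relative perverse $t$-structure. The one genuinely hard ingredient --- and the step I expect to be the real obstacle --- is that $\Perv^{\ULA}(X/S)$ is a Serre subcategory of $\Perv(X/S)$, i.e.\ that a subobject in $\Perv(X/S)$ of an object of $\Perv^{\ULA}(X/S)$ is again in $\Perv^{\ULA}(X/S)$: this is exactly where ``$S$ geometrically unibranch'' is indispensable, for without it the relative perverse sheaf $L$ produced above need not be universally locally acyclic and the argument collapses. Since that Serre-subcategory property has already been recorded above, it is invoked here as a black box; a self-contained proof of it would go by reduction to the case where $S$ is the spectrum of a discrete valuation ring, followed by a comparison of nearby and vanishing cycles in the spirit of the proof of Lemma~\ref{lm:ULAdeterminedbygene}.
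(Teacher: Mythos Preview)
The paper does not give a proof of this statement: it is recorded as a \emph{Fact} with a bare citation to Hansen--Scholze, so there is nothing in the paper to compare your argument against.

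That said, your proposal is a correct and self-contained proof of the cited result. Exactness and faithfulness are handled correctly via Fact~\ref{ft:relperv}~\ref{it:onDcb} and Lemma~\ref{lm:ULAdeterminedbygene}. The reduction of both full faithfulness and stability under subquotients to the single subobject-lifting claim, via the graph trick, is clean and standard. The spreading-out step and the use of $j_{U!}$ to produce a morphism into $K$ whose image in $\Perv(X/S)$ furnishes the desired $L$ is the right move, and the $t$-exactness of $j_U^{*}$ for the relative perverse $t$-structures (immediate from the fibrewise description) makes the computation of $L|_{X_{\eta}}$ go through. You correctly identify the Serre-subcategory lemma immediately preceding this Fact as the key input --- the only place the geometric-unibranch hypothesis enters --- and you are right to treat it as a black box here. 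One cosmetic point: the parenthetical ``such $L$ being unique by faithfulness'' is slightly glib --- uniqueness really comes from the same mechanism as faithfulness (apply Lemma~\ref{lm:ULAdeterminedbygene} to $L/(L\cap L')$ for two candidate lifts $L,L'$), not from faithfulness as a formal consequence --- but you never actually use uniqueness, so this is harmless.
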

For every integer $j$, let ${}^{p/S}\cH^j:D_c^b(X)\to \Perv(X/S)$ be the $j$-th cohomology functor associated with the relative perverse t-structure.
\begin{lm}\label{lm:genericfibersemisimple}Suppose that $S$ is smooth over $k$ and irreducible with generic point $\eta$.
	Assume that $K\in \Perv^{\ULA}(X/S)$ is semisimple in $D_c^b(X)$. Then $K|_{X_{\eta}}$ is semisimple in $\Perv(X_{\eta})$.\end{lm}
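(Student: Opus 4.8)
The plan is to reduce to a ``generic-fibre'' analogue of Lemma \ref{lm:shriksimple}: \emph{for every simple $Q\in\Perv(X)$, the complex $Q|_{X_\eta}$ is a semisimple object of $D_c^b(X_\eta)$ in the sense of Definition \ref{df:ssDcb}}. Granting this, I would finish as follows. Since $K$ is semisimple in $D_c^b(X)$, it is a finite direct sum of degree shifts of semisimple perverse sheaves on $X$, hence (splitting those into simple summands) a finite sum $\bigoplus_i Q_i[n_i]$ with each $Q_i\in\Perv(X)$ simple; then $K|_{X_\eta}\cong\bigoplus_i Q_i|_{X_\eta}[n_i]$ is a finite direct sum of degree shifts of semisimple complexes, hence semisimple in $D_c^b(X_\eta)$. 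On the other hand $K\in\Perv^{\ULA}(X/S)\subseteq\Perv(X/S)$, so by Fact \ref{ft:relperv} \ref{it:onDcb} applied to the point $\eta$ we get $K|_{X_\eta}\in\Perv(X_\eta)$. A semisimple complex that is perverse equals its own ${}^p\cH^0$, hence is a semisimple perverse sheaf; this gives the lemma.

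For the reduced assertion I would use the structure theorem \cite[Thm.~4.3.1 (ii)]{beilinson2018faisceaux}, exactly as in the proof of Lemma \ref{lm:shriksimple}, to write $Q\cong j_{!*}(L[\dim V])$ with $j\colon V\hookrightarrow X$ irreducible, locally closed and smooth over $k$, and $L$ a simple $\bar{\Q}_\ell$-lisse sheaf on $V$. If $\overline{f(V)}\neq S$ then $\eta\notin\overline{f(V)}$, so $\Supp(Q|_{X_\eta})\subseteq f^{-1}(\overline{f(V)})\cap X_\eta=\varnothing$ and $Q|_{X_\eta}=0$. If $f|_V\colon V\to S$ is dominant, set $V_\eta:=V\times_S\eta$ with its base-changed locally closed immersion $j_\eta\colon V_\eta\hookrightarrow X_\eta$. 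I would then check: (i) $V_\eta$ is nonempty and integral (the generic fibre of a dominant morphism of integral schemes), regular (a localisation of the regular scheme $V$), of dimension $\dim V-\dim S$; (ii) $L|_{V_\eta}$ is a simple lisse sheaf on $V_\eta$, because $\pi_1(V_\eta,\bar x)\to\pi_1(V,\bar x)$ is surjective — indeed $V_\eta=\varprojlim_U f|_V^{-1}(U)$ over the affine open neighbourhoods $U$ of $\eta$ in $S$, each $f|_V^{-1}(U)$ is a dense open of the normal scheme $V$ so \cite[V, Prop.~8.2]{SGA1} applies, and a cofiltered inverse limit of surjections onto a fixed profinite group is surjective (the preimages of a given element form a cofiltered system of nonempty compact sets); (iii) the intermediate extension commutes with restriction to open subschemes, hence, passing to the limit $X_\eta=\varprojlim_U X_U$, with restriction to $X_\eta$, so that $Q|_{X_\eta}\cong\big((j_\eta)_{!*}(L|_{V_\eta}[\dim V_\eta])\big)[\dim S]$. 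Since $V_\eta$ is regular and $L|_{V_\eta}$ is simple, $(j_\eta)_{!*}(L|_{V_\eta}[\dim V_\eta])$ is a simple perverse sheaf on $X_\eta$, so $Q|_{X_\eta}$ is a degree shift of one, in particular a semisimple complex.

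The main obstacle is step (iii) and the bookkeeping surrounding it: unlike restriction to an open subscheme (Lemma \ref{lm:shriksimple}), restriction to $X_\eta$ passes from a $k$-variety to a $k(\eta)$-variety, so one must (a) argue that $\pi_1$, regularity, and the intermediate extension all behave well under the pro-open immersion $X_\eta=\varprojlim_U X_U\hookrightarrow X$, invoking the standard compatibility of the six operations and the perverse $t$-structure with cofiltered limits of schemes along affine transition morphisms, and (b) carry the shift by $\dim S$ — the discrepancy between the dimension functions on $X$ and on $X_\eta$ — correctly through the argument. The remaining points (the decomposition of a semisimple complex, the support computation in the non-dominant case, the limit of surjections) are routine.
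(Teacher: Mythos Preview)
Your argument is correct but follows a genuinely different route from the paper's.

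The paper argues by lifting: given a subobject $M\subset K|_{X_\eta}$ in $\Perv(X_\eta)$, it uses Fact~\ref{ft:relULAingeneric} to lift $M$ to a subobject $K'\subset K$ in $\Perv^{\ULA}(X/S)$, then uses Lemma~\ref{lm:shift} to view $K'[\dim S]\subset K[\dim S]$ inside $\Perv(X)$, splits there by semisimplicity, and restricts the complement back to $X_\eta$. This leans entirely on the ULA machinery (the fully faithful functor $\Perv^{\ULA}(X/S)\hookrightarrow\Perv(X_\eta)$ with subquotient-closed image) and avoids any limit or base-change arguments.

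You instead prove the stronger pointwise statement that the exact functor $(-)|_{X_\eta}[-\dim S]:\Perv(X)\to\Perv(X_\eta)$ (this is Lemma~\ref{lm:genericfiber}) sends simples to simples or zero, by tracking the BBD data $(V,L)$ through the generic-fibre restriction. This uses the ULA hypothesis only to know $K|_{X_\eta}\in\Perv(X_\eta)$, so it actually yields the conclusion for any $K\in\Perv(X/S)$ that is semisimple in $D_c^b(X)$. The price is your step~(iii): you need $Rj_*$ (equivalently $i^!$) to commute with restriction along $X_\eta\to X$. This is not hard once you observe that for any point $x\in X_\eta$ one has $O_{X_\eta,x}=O_{X,x}$ (since $x$ lies over $\eta$, the localisation at $B\setminus\{0\}$ is absorbed), hence the strict henselizations agree and the base-change map $(Rj_*M)|_{X_\eta}\to R(j_\eta)_*(M|_{V_\eta})$ is an isomorphism on stalks. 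Combined with the perverse $t$-exactness of Lemma~\ref{lm:genericfiber}, this gives $Q|_{X_\eta}[-\dim S]=\mathrm{Im}\big({}^p(j_\eta)_!\to{}^p(j_\eta)_*\big)=(j_\eta)_{!*}(L|_{V_\eta}[\dim V_\eta])$ directly, which is cleaner than the limit formulation you sketched.

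For step~(ii), your compactness argument is fine, but the quicker route (parallel to Lemma~\ref{lm:shriksimple}) is: any connected finite \'etale cover $W\to V$ is integral, hence $W_\eta$ is a localisation of an integral scheme and therefore connected; this already gives surjectivity of $\pi_1(V_\eta)\to\pi_1(V)$.
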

\begin{proof}By   Fact \ref{ft:relULAingeneric},  for every subobject $M\subset K|_{X_{\eta}}$ in $\Perv(X_{\eta})$,  there is a subobject $K'\subset K$ in $\Perv^{\ULA}(X/S)$ with $K'|_{X_{\eta}}=M$. By Lemma \ref{lm:shift} and smoothness of $S$, the morphism $K'[\dim S]\to K[\dim S]$ is a monomorphism in $\Perv(X)$. Because $K$ is semisimple in $D_c^b(X)$, its shift $K[\dim S]$ is semisimple in $\Perv(X)$. Thus, there is  a subobject $N\subset K[\dim S]$ in $\Perv(X)$ with $K[\dim S]=(K'[\dim S])\oplus N$. Then $K=K'\oplus (N[-\dim S])$ in $D_c^b(X)$.  For every nonzero integer $j$, one has \[0={}^{p/S}\cH^j(K)=0\oplus {}^{p/S}\cH^j(N[-\dim S])\] in $\Perv(X/S)$. Hence ${}^{p/S}\cH^j(N[-\dim S])=0$ and  $N[-\dim S]\in\Perv(X/S)$. Consequently, $K|_{X_{\eta}}=M\oplus (N|_{X_{\eta}}[-\dim S])$ in $\Perv(X_{\eta})$. By \cite[Thm.~4.3.1 (i)]{beilinson2018faisceaux}, the abelian category $\Perv(X_{\eta})$ is Noetherian and Artinian. As every subobject of $K|_{X_{\eta}}$ in $\Perv(X_{\eta})$  admits a direct complement, the semisimplicity follows from Lemma \ref{lm:finisubq} \ref{it:subhascom}.\end{proof}

Lemma \ref{lm:shift} is stated without proof  for regular schemes $S$ in \cite[p.636]{hansen2023relative}.
\begin{lm}\label{lm:shift}
 Assume that $S$ is smooth over $k$ of equidimension $d$. Then the shifted inclusion \begin{equation}\label{eq:shift}(-)[d]:(D^{\ULA}(X/S),{}^{p/S}D^{\ULA,\le0}(X/S),{}^{p/S}D^{\ULA,\ge0}(X/S))\to (D_c^b(X),{}^pD^{\le0}(X),{}^pD^{\ge0}(X))\end{equation} 
 is t-exact.  In particular, it restricts to an exact functor \begin{equation}\label{eq:shiftperv}
 (-)[d]:\Perv^{\ULA}(X/S)\to \Perv(X),
 \end{equation}and a t-exact equivalence
\[(-)[d]: (D^{\ULA}(X/S),{}^{p/S}D^{\ULA,\le0}(X/S),{}^{p/S}D^{\ULA,\ge0}(X/S))\to (D^{\ULA}(X/S),{}^pD^{\le0}(X),{}^pD^{\ge0}(X)).\]
\end{lm}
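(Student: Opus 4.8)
The plan is to reduce the lemma to the two "forward" inclusions
\[(-)[d]\bigl({}^{p/S}D^{\ULA,\le0}(X/S)\bigr)\subseteq{}^pD^{\le0}(X),\qquad (-)[d]\bigl({}^{p/S}D^{\ULA,\ge0}(X/S)\bigr)\subseteq{}^pD^{\ge0}(X).\]
By definition these two statements say exactly that $(-)[d]\colon D^{\ULA}(X/S)\to D_c^b(X)$ is t-exact, and restricting a t-exact functor to the hearts gives the exact functor \eqref{eq:shiftperv}. The t-exact \emph{equivalence} then needs in addition the reverse inclusions on $D^{\ULA}(X/S)$, which I would deduce from the forward ones by a truncation argument.

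For the first forward inclusion, let $K\in{}^{p/S}D^{\ULA,\le0}(X/S)$, so that $K|_{X_{\bar s}}\in{}^pD^{\le0}(X_{\bar s})$ for every geometric point $\bar s$ of $S$ by Fact \ref{ft:relperv} \ref{it:onDcb}. Fix an integer $n$ and a point $x\in\Supp\cH^n(K)$; set $s=f(x)$, choose a geometric point $\bar x$ over $x$ lying over a geometric point $\bar s$ over $s$, and let $x'$ be the image of $\bar x$ in $X_{\bar s}$. Since $(-)|_{X_{\bar s}}$ is an exact functor on sheaves, $\cH^n(K)_{\bar x}=\cH^n(K|_{X_{\bar s}})_{\bar x}$, hence $x'\in\Supp\cH^n(K|_{X_{\bar s}})$ and so $\dim\overline{\{x'\}}_{X_{\bar s}}\le-n$. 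Restricting $f$ gives a dominant morphism $\overline{\{x\}}_X\to\overline{\{s\}}_S$ of integral $k$-varieties (dominant because $f(x)=s$), and the generic-fibre dimension formula yields $\dim\overline{\{x\}}_X=\dim\overline{\{s\}}_S+\dim\overline{\{x'\}}_{X_{\bar s}}\le d+(-n)$, where I used $\dim\overline{\{s\}}_S\le\dim S=d$. As $x\in\Supp\cH^n(K)$ was arbitrary, $\dim\overline{\Supp\cH^n(K)}\le d-n$ for all $n$, i.e. $K[d]\in{}^pD^{\le0}(X)$.

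The second forward inclusion I would obtain by Verdier duality. Since $S$ is smooth of equidimension $d$ over $k$, absolute purity gives $R a_S^!\bar{\Q}_{\ell}\cong\bar{\Q}_{\ell}[2d](d)$ for the structure morphism $a_S\colon S\to\Spec k$; applying $Rf^!$ gives $\omega_X\cong K_{X/S}[2d](d)$ for the dualizing complex $\omega_X$ of $X$, whence $\D_X(K[d])\cong(\D_{X/S}K)[d](d)$, and the Tate twist does not affect the perverse t-structure (Lemma \ref{lm:tensorLchi}). So if $K\in{}^{p/S}D^{\ULA,\ge0}(X/S)$, then $\D_{X/S}K\in{}^{p/S}D^{\ULA,\le0}(X/S)$ by Fact \ref{ft:relperv} \ref{it:relVerdier}, hence $(\D_{X/S}K)[d]\in{}^pD^{\le0}(X)$ by the first inclusion, so $\D_X(K[d])\in{}^pD^{\le0}(X)$, and applying $\D_X$ (which interchanges ${}^pD^{\le0}(X)$ and ${}^pD^{\ge0}(X)$) gives $K[d]\in{}^pD^{\ge0}(X)$.

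For the reverse inclusions, let $K\in D^{\ULA}(X/S)$ with $K[d]\in{}^pD^{\le0}(X)$, and apply $(-)[d]$ to the relative perverse truncation triangle ${}^{p/S}\tau^{\le0}K\to K\to{}^{p/S}\tau^{\ge1}K\xrightarrow{+1}$, which lives in $D^{\ULA}(X/S)$ by Fact \ref{ft:relperv} \ref{it:resrelt}. By the first inclusion $({}^{p/S}\tau^{\le0}K)[d]\in{}^pD^{\le0}(X)$, and $K[d]\in{}^pD^{\le0}(X)$ by hypothesis, so the long exact sequence of perverse cohomology forces $({}^{p/S}\tau^{\ge1}K)[d]\in{}^pD^{\le0}(X)$; on the other hand $({}^{p/S}\tau^{\ge1}K)[d-1]\in{}^pD^{\ge0}(X)$ by the second inclusion, so $({}^{p/S}\tau^{\ge1}K)[d]\in{}^pD^{\ge1}(X)$. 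Hence $({}^{p/S}\tau^{\ge1}K)[d]=0$ and $K\in{}^{p/S}D^{\ULA,\le0}(X/S)$; the case $K[d]\in{}^pD^{\ge0}(X)$ follows by applying this to $\D_{X/S}K$ together with the isomorphism of the previous paragraph. Combining forward and reverse inclusions gives ${}^pD^{\le0}(X)\cap D^{\ULA}(X/S)={}^{p/S}D^{\ULA,\le0}(X/S)[d]$ and likewise with $\ge0$, which both exhibits these intersections as a t-structure on $D^{\ULA}(X/S)$ and shows $(-)[d]$ is the asserted t-exact equivalence. The main obstacle is the dimension bookkeeping in the second paragraph: one has to match the perversity condition on $X$ with that on each geometric fibre through the identity $\dim\overline{\{x\}}_X=\dim\overline{\{s\}}_S+\dim\overline{\{x'\}}_{X_{\bar s}}$, and it is precisely the inequality $\dim\overline{\{s\}}_S\le d$ — not an equality — that makes the forward inclusions hold on the nose while forcing the reverse inclusions to be extracted homologically.
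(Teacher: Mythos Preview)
Your argument is correct, and the two forward inclusions are proved essentially as in the paper. For right t-exactness the paper packages the dimension count through Lemma~\ref{lm:constructibledim}~\ref{it:fibdim}, bounding $\dim\Supp\cH^n(K[d])$ by $\dim S$ plus the fibrewise bound $-n-d$; your pointwise version via the generic-fibre dimension formula is the same computation unwound at a single point (the identification of $\overline{\{x'\}}_{X_{\bar s}}$ with the generic fibre of $\overline{\{x\}}_X\to\overline{\{s\}}_S$ deserves one more sentence, but it holds). For left t-exactness both you and the paper use the identity $\D_X(K[d])\cong(\D_{X/S}K)[d](d)$ (the paper cites \cite[Lem.~3.12]{barrett2023singular} for it) together with Fact~\ref{ft:relperv}~\ref{it:relVerdier}.

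Where your write-up goes beyond the paper is the third paragraph: the paper stops after the two forward inclusions and simply asserts the t-exact \emph{equivalence} as an ``in particular''. That last clause actually needs the reverse inclusions (equivalently, that the absolute perverse truncations preserve $D^{\ULA}(X/S)$), and your truncation argument supplies them cleanly and self-containedly. One could alternatively appeal to \cite[Cor.~1.12]{hansen2023relative} to know \emph{a priori} that the absolute perverse t-structure restricts to $D^{\ULA}(X/S)$, after which t-exactness of an equivalence automatically forces the inverse to be t-exact; your route avoids that external input.
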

\begin{proof}
\begin{enumerate} \item\label{it:dshiftrightext} We prove that the functor \[(-)[d]:(	D_c^b(X),{}^{p/S}D^{\le0}(X/S),{}^{p/S}D^{\ge0}(X/S))\to (D_c^b(X),{}^pD^{\le0}(X),{}^pD^{\ge0}(X))\] is right t-exact. For  every geometric point $\bar{s}$ on $S$,  the functor $(-)|_{X_{\bar{s}} }:D_c^b(X)\to D_c^b(X_{\bar{s}} )$ is t-exact for the standard t-structures. Then 
for every integer $n$ and every $K\in {}^{p/S}D^{\le0}(X/S)$,  one has $\cH^n(K[d])|_{X_{\bar{s}}}=\cH^{n+d}(K|_{X_{\bar{s}}})$. Hence \[X_{\bar{s}}\cap \Supp \cH^n(K[d])=\Supp \cH^{n+d}(K|_{X_{\bar{s}}}).\] As $K|_{X_{\bar{s}}}\in {}^pD^{\le0}(X_{\bar{s}})$, one has $\dim \Supp \cH^{n+d}(K|_{X_{\bar{s}}})\le -n-d$. By Lemma \ref{lm:constructibledim} \ref{it:fibdim}, one has \[\dim  \Supp \cH^n(K[d]) \le-n.\] From Lemma \ref{lm:constructibledim} \ref{it:clsdim}, the Zariski closure of $\Supp \cH^n(K[d])$ in $X$ has dimension at most $-n$. Hence $K[d]\in {}^pD^{\le0}(X)$. 

\item The functor (\ref{eq:shift}) is left t-exact.  By the proof of \cite[Lemma ~3.12]{barrett2023singular}, as $S$ is smooth,
for every $K\in{}^{p/S}D^{\ULA,\ge0}(X/S) $,  one has $\D_X(K[d])=(\D_{X/S}K)[d](d)$  in  $D_c^b(X)$.  From Fact \ref{ft:relperv} \ref{it:relVerdier}, one has $\D_{X/S}K\in {}^{p/S}D^{\ULA,\le0}(X/S)$. By Part \ref{it:dshiftrightext}, one has $(\D_{X/S}K)[d]\in {}^pD^{\le0}(X)$ and hence $K[d]\in {}^pD^{\ge0}(X)$.\end{enumerate}
\end{proof}

By convention, the dimension of an empty space is $-\infty$. 
\begin{lm}\label{lm:constructibledim}	Let $X$ be a scheme of finite type over a field $F$. Let $C$ be a quasi-constructible subset of $X$. 
	\begin{enumerate}
		\item\label{it:clsdim}   Then $\dim C=\dim\bar{C}$.
		\item\label{it:loccls}  Let $\{B_i\}_{i=1}^n$ be finitely many locally closed subsets of $X$. Set $B=\cup_{i=1}^nB_i$. Then $\dim B=\max_{i=1}^n\dim B_i$.
	\end{enumerate}
	Let $f:X\to Y$ be a morphism between schemes of finite type over   $F$. 
	\begin{enumerate}
		\setItemnumber{3}\item\label{it:fibdim} Let $n\ge0$ be an integer such that $\dim (C\cap f^{-1}(y))\le n$ for every $y\in Y$. Then $\dim C\le \dim Y+n$.
		\setItemnumber{4}\item\label{it:generic} Assume that $Y$ is irreducible with generic point $\eta$. Then $\dim Y+\dim (C\cap X_{\eta})\le \dim C$.\end{enumerate}
\end{lm}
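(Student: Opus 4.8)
The plan is to prove the four dimension-theoretic assertions of Lemma~\ref{lm:constructibledim} by reducing, in each case, to statements about closed (or locally closed irreducible) subschemes, where one can invoke the standard dimension theory of schemes of finite type over a field (e.g. \cite[\href{https://stacks.math.columbia.edu/tag/02JX}{Tag 02JX}]{stacks-project} and the additivity of dimension along morphisms, \cite[\href{https://stacks.math.columbia.edu/tag/0AFE}{Tag 0AFE}]{stacks-project}). First recall that a quasi-constructible subset $C$ of a Noetherian scheme is a finite union of locally closed subsets; this is exactly what makes the passage to locally closed and then irreducible closed pieces possible, and it is the common input to all four parts.

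For part~\ref{it:clsdim}, write $C=\bigcup_{i=1}^n B_i$ with each $B_i$ locally closed. Since $\dim$ of a subset equals the supremum of $\dim$ over its irreducible components, and $\bar{C}=\bigcup_i \bar{B_i}$, it suffices to treat a single locally closed $B_i$: there $B_i$ is open dense in $\bar{B_i}$, hence they have equal dimension (a nonempty open subscheme of an irreducible Noetherian scheme is dense and equidimensional with it). This simultaneously proves part~\ref{it:loccls}: $\dim B=\dim\bigcup_i B_i=\max_i \dim B_i$ because the irreducible components of $B$ are among those of the $B_i$'s, and dimension of a finite union is the max of the dimensions of the members (using that $\overline{B}=\bigcup_i\overline{B_i}$ and irreducible components of a union are among the components of the pieces).

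For part~\ref{it:fibdim}, again decompose $C=\bigcup_i B_i$ into finitely many locally closed subsets and, by part~\ref{it:loccls}, reduce to a single $B:=B_i$, which we may further replace by $\overline{B}$ (part~\ref{it:clsdim}) and then by one of its irreducible components, equipping it with the reduced structure so that it becomes an integral scheme of finite type over $F$. Let $g:B\to \overline{f(B)}=:Z$ be the restriction of $f$, with $Z$ integral of finite type over $F$. Since $C\cap f^{-1}(y)\supset B\cap g^{-1}(z)$ for $z\in Z$, the fibre hypothesis gives $\dim(B\cap g^{-1}(z))\le n$ for all $z\in Z$; then the upper-semicontinuity/additivity of fibre dimension for a dominant morphism of integral finite-type $F$-schemes yields $\dim B\le \dim Z+n\le \dim Y+n$. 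For part~\ref{it:generic}, with $Y$ irreducible with generic point $\eta$: reduce as before to an integral closed $B$ dominating an integral closed $Z\subset Y$; if $B$ does not dominate $Y$ then $C\cap X_\eta=\varnothing$ and the inequality holds trivially ($\dim Y+(-\infty)=-\infty\le\dim C$), while if some component $B$ dominates $Y$ then $Z=Y$, the generic fibre $B\cap X_\eta=B_\eta$ is nonempty, and the dimension formula for the generic fibre of a dominant morphism of integral finite-type $F$-schemes, $\dim B=\dim Y+\dim B_\eta$, gives $\dim Y+\dim(C\cap X_\eta)\ge \dim Y+\dim B_\eta=\dim B$, hence $\le\dim C$ after taking the appropriate component.

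The only genuine subtlety — and the step I would be most careful about — is that $C$ is merely \emph{quasi}-constructible in the sense of \cite[10.1.1]{EGAIV3} rather than constructible, since $X$ need not be Noetherian a priori; however, $X$ is of finite type over a field, hence Noetherian, so quasi-constructible subsets are constructible and the finite decomposition into locally closed pieces is available. Granting that, every part is a routine reduction to the classical additivity and fibre-dimension theorems for dominant morphisms of integral schemes of finite type over a field, and no serious obstacle remains; the main thing to get right is the bookkeeping with irreducible components and the $-\infty$ convention for the empty set.
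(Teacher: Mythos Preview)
Your overall strategy matches the paper's: decompose $C$ into locally closed pieces, reduce to a single irreducible piece, and invoke the fiber-dimension inequalities for dominant morphisms of integral schemes of finite type over $F$. Parts~\ref{it:clsdim}, \ref{it:loccls}, and \ref{it:generic} are essentially correct and parallel to the paper's argument.

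In part~\ref{it:fibdim}, however, there is a genuine slip. After reducing to a single locally closed $B_i$, you \emph{replace it by its closure} and then by an irreducible component $B$ thereof, and assert $C\cap f^{-1}(y)\supset B\cap g^{-1}(z)$. This containment is false in general: $B\subset\overline{B_i}$ need not lie in $C$. Concretely, let $Y=\mathbf{A}^2$, let $X$ contain the blow-up of $\mathbf{A}^2$ at the origin as a closed subscheme with $f$ the blow-down, and let $C=B_i$ be the complement of the exceptional divisor. Then every fiber of $C$ over $Y$ has dimension $\le 0$, but $\overline{B_i}$ has a one-dimensional fiber over the origin, so your claimed bound $\dim(B\cap g^{-1}(z))\le n$ fails there.

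The paper avoids this by \emph{not} passing to the closure: a locally closed irreducible $C$, with its reduced structure, is already an integral scheme of finite type over $F$, and one applies the inequality $\dim g^{-1}(y)\ge\dim C-\dim Z$ directly to the dominant morphism $g\colon C\to Z:=\overline{f(C)}$. (Your route can also be salvaged by arguing only at the generic point of $Z$, where the fiber of $\overline{B_i}$ coincides with that of $B_i$; but then the claim ``for all $z\in Z$'' must be dropped.) One minor wording issue in part~\ref{it:generic}: ``if $B$ does not dominate $Y$ then $C\cap X_\eta=\varnothing$'' should read ``$B\cap X_\eta=\varnothing$'', since you have already reduced to a single component $B$.
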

\begin{proof}
\hfill \begin{enumerate}\item As $X$ is a Noetherian scheme, the topological subspace $C$ is Noetherian. Therefore, $C$ is the union of finitely many irreducible components. Thus, one may assume further that $C$ is nonempty and irreducible. Then  the reduced induced closed subscheme $\bar{C}$ of $X$ is integral and of finite type over $F$. By \cite[AG. Prop.~1.3]{borel2012linear}, $C$ contains a nonempty open subset of $\bar{C}$. By \cite[II, Exercise 3.20 (e)]{hartshorne2013algebraic}, one has $\dim C=\dim \bar{C}$.
	\item For every $1\le i\le n$, since $B_i\subset B$, one has $\dim B_i\le \dim B$. Then $\max_i\dim B_i\le \dim B$. By Part \ref{it:clsdim}, as every $B_i$ is quasi-constructible in $X$,  one has $\dim B_i=\dim \overline{B_i}$. As $\{\overline{B_i}\}_{i=1}^n$ is a finite closed cover of $\bar{B}$, one gets $\dim B\le \dim \bar{B}=\max_i\dim \overline{B_i}=\max_i\dim B_i$. 
	\item By Part \ref{it:loccls}, one may assume that $C$ is locally closed in $X$. Taking irreducible components, one may assume further that $C$ is irreducible. Let $Z$ be the Zariski closure of $f(C)$ in $Y$. Then $Z$ is irreducible. With reduced induced subscheme structures, one views $C$ and $Z$ as integral schemes of finite type over $F$. Moreover, $f:X\to Y$ induces a dominant morphism $g:C\to Z$ over $F$. Then for every $y\in f(C)=g(C)$, one has \[n\ge\dim C\cap f^{-1}(y)=\dim g^{-1}(y)\labelrel\ge{eq:use3.22} \dim C-\dim Z,\]where \eqref{eq:use3.22} uses \cite[II, Exercise 3.22 (b)]{hartshorne2013algebraic}. Hence $\dim C\le \dim Z+n\le \dim Y+n$.
	\item The statement is topological, so one may assume that $Y$ is reduced. As in the proof of Part \ref{it:fibdim}, one may assume that $C$ is an irreducible, locally closed subset of $X$ and view $C$ as an integral scheme of finite type over $F$. One may assume that $C\cap X_{\eta}$ is nonempty. As $C_{\eta}$ is homeomorphic to $C\cap X_{\eta}$, the morphism $C\to Y$ induced by $f$ is dominant.  By \cite[II, Exercise 3.22 (c)]{hartshorne2013algebraic}, as $Y$ is integral, one has $\dim C\cap X_{\eta}=\dim C_{\eta}=\dim C-\dim Y$. \end{enumerate}
\end{proof}
\begin{lm}\label{lm:genericfiber}
	Assume that $S$ is irreducible with generic point $\eta$ and geometric reduced. Let $d:=\dim S$. Then the functor $(-)|_{X_{\eta}}[-d]:D_c^b(X)\to D_c^b(X_{\eta})$
	is t-exact for the absolute perverse t-structures. In particular, it restricts to an exact functor \begin{equation}\label{eq:genericperv}(-)|_{X_{\eta}}[-d]:\Perv(X)\to \Perv(X_{\eta}).\end{equation}
\end{lm}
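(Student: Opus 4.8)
The plan is to establish the two halves of t-exactness of $F:=(-)|_{X_{\eta}}[-d]$ separately: that $F$ carries ${}^pD^{\le0}(X)$ into ${}^pD^{\le0}(X_{\eta})$, by a direct dimension count built on Lemma~\ref{lm:constructibledim}; and that $F$ carries ${}^pD^{\ge0}(X)$ into ${}^pD^{\ge0}(X_{\eta})$, by reducing to the first half via Verdier duality over the residue field $k(\eta)$. Once $F$ is known to be t-exact, the final assertion (that $F$ restricts to an exact functor $\Perv(X)\to\Perv(X_{\eta})$) is formal, since a t-exact functor between triangulated categories with t-structures induces an exact functor on hearts.

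For the first half, write $g\colon X_{\eta}\to X$ for the projection; as $g$ is a base change of the flat monomorphism $\eta\hookrightarrow S$, the functor $g^{*}=(-)|_{X_{\eta}}$ is t-exact for the standard t-structures, so $\cH^{n}(K|_{X_{\eta}}[-d])=g^{*}\cH^{n-d}(K)$ and hence $\Supp\cH^{n}(K|_{X_{\eta}}[-d])=\big(\Supp\cH^{n-d}K\big)\cap X_{\eta}$, a quasi-constructible subset of $X_{\eta}$. If $K\in{}^pD^{\le0}(X)$, then for every $n$ one has $\dim\overline{\Supp\cH^{n-d}K}\le d-n$ in $X$; applying Lemma~\ref{lm:constructibledim}~\ref{it:generic} with $Y=S$ and $C=\Supp\cH^{n-d}K$, together with Lemma~\ref{lm:constructibledim}~\ref{it:clsdim}, gives
\[d+\dim\!\big((\Supp\cH^{n-d}K)\cap X_{\eta}\big)\ \le\ \dim\Supp\cH^{n-d}K\ =\ \dim\overline{\Supp\cH^{n-d}K}\ \le\ d-n,\]
so $\dim\big((\Supp\cH^{n-d}K)\cap X_{\eta}\big)\le-n$; a further application of Lemma~\ref{lm:constructibledim}~\ref{it:clsdim} inside the $k(\eta)$-scheme $X_{\eta}$ yields $\dim\overline{\Supp\cH^{n}(K|_{X_{\eta}}[-d])}\le-n$ for all $n$, i.e.\ $K|_{X_{\eta}}[-d]\in{}^pD^{\le0}(X_{\eta})$.

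For the second half, set $\omega_{X}:=\D_{X}\bar{\Q}_{\ell}$ and $\omega_{X_{\eta}}:=\D_{X_{\eta}}\bar{\Q}_{\ell}$ for the dualizing complexes of $X$ over $k$ and of $X_{\eta}$ over $k(\eta)$. I would first prove the identity $(\D_{X}K)|_{X_{\eta}}\cong\D_{X_{\eta}}(K|_{X_{\eta}})[2d](d)$ in $D_c^b(X_{\eta})$. Since $g\colon X_{\eta}\to X$ is a cofiltered limit, with affine transition maps, of the open immersions $f^{-1}(U)\hookrightarrow X$ over nonempty open $U\subseteq S$, and $j^{!}\cong j^{*}$ for each such open immersion $j$, one has $g^{!}\cong g^{*}$; combined with the general identity $g^{!}\RHom(-,-)\cong\RHom(g^{*}-,g^{!}-)$ this gives $(\D_{X}K)|_{X_{\eta}}=g^{*}\RHom(K,\omega_{X})\cong\RHom\big(K|_{X_{\eta}},\,g^{!}\omega_{X}\big)$, and $g^{!}\omega_{X}$ is by functoriality of upper-shriek the dualizing complex of $X_{\eta}$ relative to $k$. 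Because $S$ is geometrically reduced, $k(\eta)/k$ is finitely generated and separably generated of transcendence degree $d$, so $\Spec k(\eta)\to\Spec k$ — being, up to a finite étale base change, a cofiltered limit of nonempty opens of a smooth $d$-dimensional $k$-variety — sends $\bar{\Q}_{\ell}$ under upper-shriek to $\bar{\Q}_{\ell}[2d](d)$; hence the dualizing complex of $X_{\eta}$ relative to $k$ is $\omega_{X_{\eta}}[2d](d)$, which gives the asserted identity. Granting it, take $K\in{}^pD^{\ge0}(X)$, so $\D_{X}K\in{}^pD^{\le0}(X)$; by the first half, $(\D_{X}K)|_{X_{\eta}}[-d]\in{}^pD^{\le0}(X_{\eta})$, and the identity rewrites this as $\D_{X_{\eta}}(K|_{X_{\eta}})[d](d)\in{}^pD^{\le0}(X_{\eta})$. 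Applying $\D_{X_{\eta}}$, which exchanges ${}^pD^{\le0}(X_{\eta})$ and ${}^pD^{\ge0}(X_{\eta})$, and using that Tate twists are perverse t-exact, we conclude $K|_{X_{\eta}}[-d]\in{}^pD^{\ge0}(X_{\eta})$, as required.

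The main obstacle, and the only non-formal input, is the shift-twist bookkeeping in the second half: one must check that the jump by $d$ in the perversity function when passing from the $k$-scheme $X$ to the $k(\eta)$-scheme $X_{\eta}$ — appearing on one side through Lemma~\ref{lm:constructibledim}~\ref{it:generic} and on the other through the comparison of dualizing complexes — is matched \emph{exactly} by the shift $[2d]$ and twist $(d)$, and that the six-functor compatibilities ($g^{!}\cong g^{*}$ and $g^{!}\RHom=\RHom(g^{*},g^{!})$) are legitimate for the non-finite-type morphism $g\colon X_{\eta}\to X$. One may streamline the setup, though not bypass this point, by first shrinking $S$ to a smooth dense open $V$ (which exists by geometric reducedness): this changes neither $X_{\eta}$ nor $d$, and $(-)|_{f^{-1}(V)}\colon D_c^b(X)\to D_c^b(f^{-1}(V))$ is perverse t-exact, so one is reduced to the case $S$ smooth, where $\omega_{X}=f^{!}\bar{\Q}_{\ell,S}\otimes\bar{\Q}_{\ell}[2d](d)$ is in standard form.
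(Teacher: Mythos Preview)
Your proof is correct. The right t-exactness argument is identical to the paper's. For left t-exactness you take a genuinely different route: you aim to prove the duality comparison $(\D_XK)|_{X_\eta}\cong\D_{X_\eta}(K|_{X_\eta})[2d](d)$ directly, by interpreting $g\colon X_\eta\to X$ as a cofiltered limit of open immersions (so that $g^!\cong g^*$ and $g^*$ commutes with $\RHom$ into the dualizing complex), and then computing the discrepancy between the dualizing complexes of $X_\eta$ over $k$ and over $k(\eta)$. The paper instead shrinks $S$ twice: first to a smooth open (using geometric reducedness, via \cite[\href{https://stacks.math.columbia.edu/tag/056V}{Tag 056V}]{stacks-project}), and then, \emph{for each fixed $K$}, to a further open over which $K$ becomes ULA (generic local acyclicity, \cite[Thm.~2.13]{SGA4.5}). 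Once $K$ is ULA and $S$ is smooth, the paper invokes the identity $\D_XK=(\D_{X/S}K)(d)[2d]$ from \cite[Lem.~3.12]{barrett2023singular} together with the base-change compatibility of relative Verdier duality for ULA objects (Fact~\ref{ft:relperv}~\ref{it:relVerdier}) to get exactly your formula. Your approach is more self-contained and avoids the ULA machinery, at the cost of having to justify the six-functor identities for the non-finite-type morphism $g$, which you rightly flag as the one delicate point; the paper's approach sidesteps this entirely by staying within the finite-type world and paying the price of the ULA reduction, which is cheap given the infrastructure already in place in Section~\ref{sec:ULA}.
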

\begin{proof}\begin{enumerate}
		\item\label{it:geneshifright} Right t-exactness: For every $K\in {}^pD^{\le0}(X)$ and every integer $n$, one has $\Supp \cH^n(K|_{X_{\eta}}[-d])=\Supp \cH^{n-d}(K|_{X_{\eta}})=X_{\eta}\cap \Supp \cH^{n-d}(K)$. By Lemma \ref{lm:constructibledim} \ref{it:generic}, one has \[\dim \Supp \cH^n(K|_{X_{\eta}}[-d])\le \dim  \Supp(\cH^{n-d}(K))-d\le-n.\] From Lemma \ref{lm:constructibledim} \ref{it:clsdim}, one has $K|_{X_{\eta}}[-d]\in{}^pD^{\le0}(X_{\eta})$.
		\item To prove left t-exactness, one may shrink $S$ to a nonempty open. By \cite[\href{https://stacks.math.columbia.edu/tag/056V}{Tag 056V}]{stacks-project}, as $S$ is geometrically reduced, shrinking $S$ one may assume that $S$ is smooth. From \cite[Thm.~2.13, p.242]{SGA4.5},  shrinking $S$  one may assume $K\in D^{\ULA}(X/S)$. Then for every integer $n$, we prove  \begin{equation}\label{eq:supptwist}\Supp \cH^n\left(\D_{X_{\eta}}(K|_{X_{\eta}}[-d])\right)=\Supp \cH^n\left((\D_XK)|_{X_{\eta}}[-d]\right).\end{equation}Indeed,  By the proof of \cite[Lem.~3.12]{barrett2023singular}, as $S$ is smooth, one has $\D_XK=(\D_{X/S}K)(d)[2d]$. From Fact \ref{ft:relperv} \ref{it:relVerdier}, as $K$ is ULA, $(\D_XK)|_{X_{\eta}}[-d]$ is a Tate twist of $\D_{X_{\eta}}(K|_{X_{\eta}}[-d])$, which proves \eqref{eq:supptwist}. 
		
		Now assume $K\in{}^pD^{\ge0}(X)$. Then $\D_XK\in {}^pD^{\le0}(X)$. From Part \ref{it:geneshifright}, one has $(\D_XK)|_{X_{\eta}}[-d]\in {}^pD^{\le0}(X_{\eta})$. By (\ref{eq:supptwist}), one has $\D_{X_{\eta}}(K|_{X_{\eta}}[-d])\in {}^pD^{\le0}(X_{\eta})$, or equivalently, $K|_{X_{\eta}}[-d]\in {}^pD^{\ge0}(X_{\eta})$. 
	\end{enumerate}
\end{proof}
\begin{lm}\label{lm:Scholze}
Assume that $S$ is smooth over $k$, integral with generic point $\eta$ and  $\dim S=d$.  Then:
\begin{enumerate}
\item\label{it:Scholzekey} Let $A\in \Perv^{\ULA}(X/S)$, and  let  $B[d]$ be a subquotient of $A[d]$ in $\Perv(X)$. If the image $B|_{X_{\eta}}\in\Perv(X_{\eta})$ of $B[d]$ under the functor (\ref{eq:genericperv}) is zero, then $B[d]=0$ in $\Perv(X)$.
\item\label{it:ScholzeSerre} The functor \eqref{eq:shiftperv} identifies $\Perv^{\ULA}(X/S)$ as a Serre subcategory of $\Perv(X)$.
\end{enumerate} 
\end{lm}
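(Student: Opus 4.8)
The plan is to deduce \ref{it:ScholzeSerre} from \ref{it:Scholzekey}, and to prove \ref{it:Scholzekey} by reducing it, via Fact~\ref{ft:relULAingeneric}, to the assertion that the shift functor \eqref{eq:shiftperv} sends \emph{simple} objects to simple objects. Write $d=\dim S$, abbreviate $\iota:=(-)[d]\colon\Perv^{\ULA}(X/S)\to\Perv(X)$ (exact and fully faithful by Lemma~\ref{lm:shift}), and let $\cS\subset\Perv(X)$ be the Serre subcategory of objects $M$ with $M|_{X_\eta}=0$, i.e.\ the kernel of \eqref{eq:genericperv}. First I would note that, by Fact~\ref{ft:relULAingeneric} and Lemma~\ref{lm:ULAdeterminedbygene}, restriction to $X_\eta$ induces an isomorphism between the subobject lattice of an object $A\in\Perv^{\ULA}(X/S)$ and that of $A|_{X_\eta}$; in particular $A$ has finite length (\cite{beilinson2018faisceaux}). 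Applying the exact functor $\iota$ to a composition series of $A$, the statement \ref{it:Scholzekey} follows once each composition factor $A_k[d]$ is known to be simple in $\Perv(X)$: then $A[d]$ has \emph{all} its composition factors outside $\cS$ (because $A_k\ne0$ forces $A_k|_{X_\eta}\ne0$, Lemma~\ref{lm:ULAdeterminedbygene}), so any subquotient $B[d]$ of $A[d]$ lying in $\cS$ has no composition factors and hence is zero.

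Next I would establish the key local input: for the base change $i\colon X_Z\hookrightarrow X$ along a closed integral \emph{smooth} subscheme $Z\subsetneq S$ of codimension $c\ge 1$, and any $A\in\Perv^{\ULA}(X/S)$, one has $\Hom_{\Perv(X)}(A[d],i_*D)=\Hom_{\Perv(X)}(i_*D,A[d])=0$ for every $D\in\Perv(X_Z)$. Indeed $i^*A\in\Perv^{\ULA}(X_Z/Z)$ by base change (Fact~\ref{ft:ULA}\ref{it:basechgULA}) and inspection of fibres, so $(i^*A)[\dim Z]$ is perverse by Lemma~\ref{lm:shift} over $Z$, whence $i^*(A[d])=((i^*A)[\dim Z])[c]\in{}^pD^{\le -c}(X_Z)$; dually, using $\D_X K=(\D_{X/S}K)(d)[2d]$ (as in the proofs of Lemmas~\ref{lm:genericfiber} and \ref{lm:shift}), that $\D_{X/S}$ preserves $\Perv^{\ULA}$ (Fact~\ref{ft:relperv}\ref{it:relVerdier}), and base change, one gets $i^!(A[d])=\D_{X_Z}(i^*\D_X(A[d]))\in{}^pD^{\ge c}(X_Z)$. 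Since $c\ge1$ and $D$ is perverse, the adjunctions $i^*\dashv i_*\dashv i^!$ give the two vanishings at once.

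The heart of the argument — and the step I expect to be the main obstacle — is the simple case: \emph{if $A\in\Perv^{\ULA}(X/S)$ is simple, then $A[d]$ is simple.} Here I would argue as follows. By Fact~\ref{ft:relULAingeneric} and Lemma~\ref{lm:ULAdeterminedbygene}, $A|_{X_\eta}$ is simple in $\Perv(X_\eta)$, so $\mathrm{End}_{\Perv(X)}(A[d])=\mathrm{End}_{\Perv(X_\eta)}(A|_{X_\eta})=\bar\Q_\ell$ and $A[d]$ is indecomposable. Feeding a composition series of $A[d]$ through \eqref{eq:genericperv} and using simplicity of $A|_{X_\eta}$, exactly one composition factor $C_0$ of $A[d]$ has nonzero restriction to $X_\eta$, with multiplicity one, and all other composition factors lie in $\cS$. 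Assume one does. For each such simple composition factor $C''\in\cS$, set $Z'':=\overline{f(\Supp C'')}\subsetneq S$ and pass to a dense open $U\subseteq S$ over which a component of $Z''$ meeting $\Supp C''$ is smooth; then the key vanishing over $U$, together with the injectivity of $\Hom_{\Perv(X)}(C'',A[d])\hookrightarrow\Hom_{\Perv(X_U)}(C''|_{X_U},(A|_{X_U})[d])$ (valid since $C''|_{X_U}\ne0$), shows $\Hom(C'',A[d])=\Hom(A[d],C'')=0$. Hence no simple object of $\cS$ maps into or out of $A[d]$, so — $C_0$ having multiplicity one — the socle and the head of $A[d]$ both equal $C_0$; the surjection $A[d]\twoheadrightarrow C_0$ has kernel $K$ whose composition factors all lie in $\cS$, so $K\in\cS$, so $C_0\cap K=0$ inside $A[d]$, and comparison of lengths forces $A[d]=C_0\oplus K$. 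Indecomposability gives $K=0$, i.e.\ $A[d]=C_0$ is simple, contradicting the existence of a composition factor in $\cS$. (The technical point to be careful with is arranging $Z''$ smooth by shrinking $S$ while keeping $C''|_{X_U}\ne0$ and checking that the restriction map on $\mathrm{Hom}$ is injective.)

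Finally, for \ref{it:ScholzeSerre}: by Lemma~\ref{lm:shift} the image of \eqref{eq:shiftperv} is $\mathcal I:=\Perv(X)\cap D^{\ULA}(X/S)$, which is closed under extensions because $D^{\ULA}(X/S)$ is a triangulated subcategory, and is $\D_X$-stable because $\D_X=\D_{X/S}(d)[2d]$ and $\D_{X/S}$ preserves $D^{\ULA}(X/S)$ (Fact~\ref{ft:relperv}\ref{it:relVerdier}); so closure under subobjects and under quotients are equivalent. For subobjects: given $Q\subseteq A[d]$ in $\Perv(X)$ with $A\in\Perv^{\ULA}(X/S)$, Fact~\ref{ft:relULAingeneric} produces $A'\in\Perv^{\ULA}(X/S)$ with $A'\hookrightarrow A$ and $A'|_{X_\eta}=Q|_{X_\eta}[-d]$; then $Q$ and $\iota(A')$ are two subobjects of $A[d]$ with equal image under \eqref{eq:genericperv}, so $Q/(Q\cap\iota A')$ and $\iota A'/(Q\cap\iota A')$ are subquotients of $A[d]$ killed by \eqref{eq:genericperv}, hence zero by \ref{it:Scholzekey}; thus $Q=\iota(A')\in\mathcal I$, and $\mathcal I$ is a Serre subcategory.
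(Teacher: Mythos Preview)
Your argument is correct. For part~\ref{it:ScholzeSerre} you follow essentially the paper's own route: lift the subobject at the generic fibre via Fact~\ref{ft:relULAingeneric}, then apply part~\ref{it:Scholzekey} twice to force equality. (Your remark that closure under subobjects and quotients are interchanged by $\D_X=\D_{X/S}(d)[2d]$ is a nice shortcut the paper replaces by a symmetric ``similarly''.)

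For part~\ref{it:Scholzekey}, by contrast, the paper's proof is a single line: it invokes \cite[Cor.~1.12]{hansen2023relative}, which says that over a \emph{regular} base every absolute-perverse subquotient of a ULA object is again ULA; thus $B\in D^{\ULA}(X/S)$, and Lemma~\ref{lm:ULAdeterminedbygene} gives $B=0$ at once. Your approach avoids that black box and instead proves from scratch the stronger statement that the shift $\iota$ sends \emph{simple} objects of $\Perv^{\ULA}(X/S)$ to simple objects of $\Perv(X)$, via the Hom-vanishing $\Hom(i_*D,A[d])=\Hom(A[d],i_*D)=0$ along fibres over smooth closed $Z\subsetneq S$ combined with a socle/head/indecomposability argument. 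This is substantially longer, but it is self-contained relative to the lemmas already in the paper and yields a useful byproduct (preservation of simples) not stated there. The technical points you flag all check out: since $C''$ is simple, $\Supp C''$ is irreducible, so $Z''=\overline{f(\Supp C'')}$ is integral and one may shrink $S$ to $U=S\setminus\mathrm{Sing}(Z'')$; constructibility of $f(\Supp C'')$ then guarantees $C''|_{X_U}\ne0$, and simplicity of $C''$ makes the restriction map on $\Hom$'s injective as you claim.
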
\begin{proof}
\hfill\begin{enumerate}
\item By regularity of $S$ and \cite[Cor.~1.12]{hansen2023relative}, one has $B\in D^{\ULA}(X/S)$. Then by Lemma \ref{lm:ULAdeterminedbygene}, since $B|_{X_{\eta}}=0$,  one has $B=0$.
\item It follows from the definition that the functor  \eqref{eq:shiftperv} is  fully faithful. Its  essential image is closed under extensions in $\Perv(X)$, because  $\Perv^{\ULA}(X/S)$ is closed under extensions in the triangulated subcategory $D^{\ULA}(X/S)$ of $D_c^b(X)$. 

We claim that the essential image is closed under taking subobjects. Take $K\in \Perv^{\ULA}(X/S)$ and a subobject $L[d]$ of $K[d]$ in $\Perv(X)$. By Lemma \ref{lm:genericfiber}, as $S$ is integral and smooth,    $L|_{X_{\eta}}$  is a subobject of $K|_{X_{\eta}}$ in $\Perv(X_{\eta})$. By smoothness of $S$ and Fact \ref{ft:relULAingeneric}, there is a subobject $L'\subset K$ in $\Perv^{\ULA}(X/S)$ with $L'|_{X_{\eta}}=L|_{X_{\eta}}$. Set $M=K/L'\in\Perv^{\ULA}(X/S)$. Let $N[d]$ be the image of $L[d]$ under the morphism $K[d]\to M[d]$ in $\Perv(X)$. From Lemma \ref{lm:genericfiber}, as the sequence \[0\to L'[d]\cap L[d]\to L[d]\to N[d]\to0\] is exact in $\Perv(X)$,  the sequence \[0\to L'|_{X_{\eta}}\cap L|_{X_{\eta}}\to L|_{X_{\eta}}\to N|_{X_{\eta}}\to0\] is exact in $\Perv(X_{\eta})$. It implies $N|_{X_{\eta}}=0$. By Part \ref{it:Scholzekey},  since $N[d]$ is a subobject of $M[d]$ in $\Perv(X)$, one has $N[d]=0$. Then $L[d]$ is a subobject of $L'[d]$ in $\Perv(X)$. Since $(L'[d])/(L[d])$ is a quotient of $L'[d]$ in $\Perv(X)$ and $(L'|_{X_{\eta}})/(L|_{X_{\eta}})=0$ in $\Perv(X_{\eta})$, one gets $(L'[d])/(L[d])=0$ in $\Perv(X)$. Therefore, $L[d]=L'[d]$.  The claim is proved. 

Similarly, the essential image is closed under taking quotients.  By \cite[\href{https://stacks.math.columbia.edu/tag/02MP}{Tag 02MP}]{stacks-project}, the essential image is a Serre subcategory of $\Perv(X)$.
\end{enumerate}
\end{proof}

	\section{Cotori}\label{sec:cotori}We review Gabber and Loeser's construction of a scheme structure on the set of pro-$\ell$ characters.
 \subsection{Definition and basic properties}By \cite[p.127]{robert2000course}, there is a canonical absolute value on $\bar{\Q}_{\ell}$ extending the discrete absolute value $|\cdot|_{\ell}$ on $\Q_{\ell}$. It induces a topology on $\bar{\Q}_{\ell}$ which is totally disconnected. A subset $A\subset\bar{\Q}_{\ell}$ is closed if and only if for every finite subextension $E/\Q_{\ell}$ of $\bar{\Q}_{\ell}$, the subset $A\cap E$ is closed in the discrete valuation field $E$.
 
 For a profinite group $G$, let $\cC(G)$ be the group of $\ell$-adic characters, \textit{i.e.}, continuous morphisms $\chi:G\to \bar{\Q}_{\ell}^{\times}$. Then $\chi(G)$ are compact subgroup of $\bar{\Q}_{\ell}^{\times}$.
 \begin{lm}\label{lm:cptinL}
 	\hfill	\begin{enumerate}\item\label{it:cptinE}	Let $C$ be a  compact subset of $\bar{\Q}_{\ell}$. Then there is a finite subextension $E$ of $\bar{\Q}_{\ell}/\Q_{\ell}$ with $C\subset E$. 
 		\item\label{it:cptinOE} Let $G\le \bar{\Q}_{\ell}^{\times}$ be a compact subgroup. Then  there is a finite subextension $E$ of $\bar{\Q}_{\ell}/\Q_{\ell}$ with $G\subset O_E^{\times}$.
 		\item\label{it:cptdec} In Part \ref{it:cptinOE}, let $G^{(\ell)}$ (resp. $G^{(\ell')}$) be the $\ell$-Sylow subgroup (resp. maximal prime-to-$\ell$ quotient) of $G$. Then as topological group $G$ is isomorphic to $G^{(\ell)}\times G^{(\ell')}$, and $G^{(\ell')}$  is  finite.\end{enumerate}
 \end{lm}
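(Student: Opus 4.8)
The plan is to establish the three parts in order; the real work is in Part~\ref{it:cptinE}, after which Parts~\ref{it:cptinOE} and \ref{it:cptdec} fall out of the structure theory of local fields.

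For Part~\ref{it:cptinE} I would first use that a $p$-adic field has only finitely many extensions of each given degree, so $\bar{\Q}_{\ell}/\Q_{\ell}$ has only countably many finite subextensions; enumerating them as $E_1,E_2,\dots$ and putting $F_n:=E_1\cdots E_n$ produces a cofinal increasing chain $F_1\subseteq F_2\subseteq\cdots$ of finite subextensions with $\bigcup_n F_n=\bar{\Q}_{\ell}$. Each $F_n$ is finite-dimensional over the complete field $\Q_{\ell}$, hence complete, hence closed in $\bar{\Q}_{\ell}$ by the characterization of closed subsets recalled above (for every finite subextension $E$, $F_n\cap E$ is a $\Q_{\ell}$-subspace of $E$, so closed in $E$); that characterization also identifies the subspace topology on $F_n$ with its local-field topology, so each $F_n$ is Hausdorff, and it says precisely that $\bar{\Q}_{\ell}$ carries the colimit topology of the chain $(F_n)_n$. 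I would then invoke the standard fact that a compact subset of the colimit of an increasing chain of closed Hausdorff subspaces lies in one of the terms: if $C\not\subseteq F_n$ for every $n$, pick $x_n\in C\setminus F_n$; for $m\ge n$ one has $F_n\subseteq F_m$ and $x_m\notin F_m$, hence $x_m\notin F_n$, so for every finite subextension $E$ (which lies in some $F_n$) the set $\{x_m:m\ge1\}\cap E$ is finite, hence closed in $E$. The same applies to every subset, so $S:=\{x_n:n\ge1\}$ is an infinite, closed, discrete subspace of $\bar{\Q}_{\ell}$; but $S$ is a closed subset of the compact set $C$, hence compact --- a contradiction.

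Part~\ref{it:cptinOE} then follows at once: by Part~\ref{it:cptinE}, $G\subseteq E$ for some finite subextension $E/\Q_{\ell}$, so $G$ is a compact subgroup of the multiplicative group $E^{\times}$ of a local field, and since the normalized valuation $v_E\colon E^{\times}\to\Z$ is a continuous homomorphism onto the discrete group $\Z$ with kernel $O_E^{\times}$, the image $v_E(G)$ is a compact subgroup of $\Z$, hence trivial; thus $G\subseteq O_E^{\times}$. For Part~\ref{it:cptdec}, recall that $O_E^{\times}$ is a profinite abelian group containing the open pro-$\ell$ subgroup $1+\mathfrak{m}_E$ with $O_E^{\times}/(1+\mathfrak{m}_E)\cong k_E^{\times}$ finite of order prime to $\ell$. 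Being compact, $G$ is closed in $O_E^{\times}$, hence a profinite abelian group, so it is the topological direct product of its pro-$p$ Sylow subgroups $G^{(p)}$. Here $G^{(\ell)}=G\cap(1+\mathfrak{m}_E)$, because $1+\mathfrak{m}_E$ is pro-$\ell$ while $O_E^{\times}/(1+\mathfrak{m}_E)$ has order prime to $\ell$; and for $p\ne\ell$ the intersection $G^{(p)}\cap(1+\mathfrak{m}_E)$ is a pro-$p$ subgroup of the pro-$\ell$ group $1+\mathfrak{m}_E$, hence trivial, so $G^{(p)}$ embeds into the finite group $k_E^{\times}$ and is therefore finite, and nontrivial for only finitely many $p$. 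Consequently $G^{(\ell')}\cong\prod_{p\ne\ell}G^{(p)}$ is finite and $G\cong G^{(\ell)}\times G^{(\ell')}$ as topological groups.

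The one genuinely delicate step is Part~\ref{it:cptinE}: one must really use that the topology on $\bar{\Q}_{\ell}$ is the colimit topology of its finite subextensions --- exactly what the description of closed sets recorded before the lemma encodes --- since in the naive metric topology no such statement holds (a convergent sequence together with its limit can have elements of unbounded degree, e.g.\ $\ell^m\zeta_{\ell^m}\to 0$). Granting this, the colimit-compactness argument is routine, and Parts~\ref{it:cptinOE}--\ref{it:cptdec} are standard local-field bookkeeping.
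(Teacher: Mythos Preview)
Your proof is correct. Parts~\ref{it:cptinE} and~\ref{it:cptinOE} follow essentially the same line as the paper: for Part~\ref{it:cptinE} the paper also produces, by contradiction, an infinite subset of $C$ meeting each finite subextension in a finite set, hence closed and discrete, hence simultaneously compact and infinite discrete --- the only cosmetic difference is that the paper selects points of strictly increasing degree rather than first building a cofinal chain $(F_n)$; for Part~\ref{it:cptinOE} the paper simply cites Serre for the fact you spell out via the valuation map.

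Part~\ref{it:cptdec} is where your argument genuinely diverges. The paper invokes that $G$ is an $\ell$-adic Lie group and uses Lazard's theorem to extract an open pro-$\ell$ subgroup, then appeals to profinite Sylow theory and a splitting result. Your route is more elementary and self-contained: you exploit directly that $O_E^{\times}$ has the open pro-$\ell$ subgroup $1+\mathfrak{m}_E$ with prime-to-$\ell$ finite quotient $k_E^{\times}$, and that a closed subgroup of a profinite abelian group decomposes as the product of its pro-$p$ Sylow parts. This avoids the Lie-theoretic machinery entirely and makes transparent why $G^{(\ell')}$ is finite (it injects into $k_E^{\times}$). The paper's approach, by contrast, would adapt more readily to noncommutative compact subgroups of $\GL_n(\bar{\Q}_{\ell})$, where the Sylow product decomposition is unavailable; in the abelian setting at hand, your argument is the cleaner one.
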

 \begin{proof}
 	\begin{enumerate}
 		\item Assume the contrary. Then there is a sequence of elements $x_1,x_2,\dots$ in $C$ with $[\Q_{\ell}(x_{n+1}):\Q_{\ell}]>[\Q_{\ell}(x_n):\Q_{\ell}]$ for every integer $n>0$. Let $B\subset C$ be the (infinite) set of elements of this sequence. For every subset $S\subset B$, every  finite subextension $F/\Q_{\ell}$, the set $S\cap F$ is finite, so closed in $F$. Therefore, $S$ is closed in $\bar{\Q}_{\ell}$. In particular, the  set $B$ is closed and hence compact in $C$.  Every subset of $B$ is closed in $B$, so $B$ is  discrete. Thus, $B$ is finite, a contradiction.
 		\item By Part \ref{it:cptinE}, there is a finite subextension $E$ of $\bar{\Q}_{\ell}/\Q_{\ell}$ containing $G$. By \cite[Thm.~1 2, p.122]{serre2009lie}, one has $G\subset O_E^{\times}$. 
 		\item By Part \ref{it:cptinOE} and \cite[Cor., p.155]{serre2009lie}, $G$  is an $\ell$-adic Lie group. From Lazard's theorem (see, \textit{e.g.}, \cite[p.711]{gonzalez2009analytic}), there is a pro-$\ell$ open subgroup $U$ of $G$. By \cite[Cor.~2.3.6 (b)]{ribes2000profinite},  there is an $\ell$-Sylow subgroup $H\le G$ containing $U$. Since $G$ is compact, $[G:U]$ is finite. Thus, the group $G/H$ is finite of order prime to $\ell$. By \cite[Prop.~2.3.8]{ribes2000profinite}, $G$ is isomorphic to $G/H\times H$. By \cite[Cor.~2.3.6 (c)]{ribes2000profinite}, since $G$ is commutative, it has exactly one $\ell$-Sylow subgroup. 
 	\end{enumerate}
 \end{proof} 	Let $\cC(G)_{\ell'}$ (resp. $\cC(G)_{\ell}$) be the subgroup of $\cC(G)$ consisting of characters of finite order prime to $\ell$ (resp. that are pro-$\ell$).  Then there is a canonical isomorphism $\cC(G)_{\ell}\iso \cC\big((G^{(\ell)})^{\ab}\big)$. By Lemma \ref{lm:cptinL} \ref{it:cptdec},  one has  $
 \cC(G)=\cC(G)_{\ell'}\times \cC(G)_{\ell}$.
 
We review the contents of \cite[Sec.~3.2]{gabber1996faisceaux}. Fix	 an integer $n\ge 0$.  Let $A_n$ be a free $\hat{\Z}$-module of rank $n$. Let $\{\gamma_1,\dots,\gamma_n\}$ be a $\Z_{\ell}$-basis of $A_n^{(\ell)}$. Let $\cR=\{O_E:E/\Q_{\ell}\text{ is a finite subextension of }\bar{\Q}_{\ell}\}$, which is a directed set under inclusion. For every $R\in \cR$, let $m_R$ be the maximal ideal of $R$. Let $R[[A_n^{(\ell)}]]:=\varprojlim_{i,j\ge1}(R/m_R^i)[A_n^{(\ell)}/\ell^j]$ be the completed group ring.   There is a canonical injective morphism $A_n^{(\ell)}\to R[[A_n^{(\ell)}]]^{\times}$ of groups.

\begin{ft}[{\cite[p.509]{gabber1996faisceaux}}] The ring $R[[A_n^{(\ell)}]]$ is a Noetherian, regular, complete, local domain  of Krull dimension $1+n$. There is	an  isomorphism of topological rings \begin{equation}\label{eq:Iwasawaiso}
		R[[A_n^{(\ell)}]]\to R[[X_1,\dots,X_n]],\quad \gamma_i\mapsto 1+X_i.
\end{equation}\end{ft}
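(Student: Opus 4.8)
The second assertion reduces the first: once the topological ring isomorphism \eqref{eq:Iwasawaiso} is available, what remains is only the well-known structure of a formal power series ring $\Lambda:=R[[X_1,\dots,X_n]]$ over the complete discrete valuation ring $R$, namely that it is Noetherian (the power series version of the Hilbert basis theorem), local with maximal ideal $\mathfrak M:=(m_R,X_1,\dots,X_n)$ and $\mathfrak M$-adically complete, an integral domain, regular, and of Krull dimension $\dim R+n=1+n$. So the plan is to construct \eqref{eq:Iwasawaiso} and then invoke these facts. The construction proceeds through finite levels: rewrite the completed group ring as an inverse limit of the finite-level group rings, identify these via the Weierstrass division theorem with quotients of $\Lambda$, and pass to the limit.

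First I would record the finite-level description. With the fixed $\Z_{\ell}$-basis $\gamma_1,\dots,\gamma_n$, the quotient $A_n^{(\ell)}/\ell^jA_n^{(\ell)}$ is free of rank $n$ over $\Z/\ell^j$ on the classes of the $\gamma_i$, so $R[A_n^{(\ell)}/\ell^j]\cong R[\Z/\ell^j]\otimes_R\cdots\otimes_R R[\Z/\ell^j]$ ($n$ factors); sending the generator to $1+T$ gives $R[\Z/\ell^j]\cong R[T]/(\omega_j(T))$ with $\omega_j(T):=(1+T)^{\ell^j}-1$, hence $R[A_n^{(\ell)}/\ell^j]\cong R[T_1,\dots,T_n]/J_j$, where $J_j:=(\omega_j(T_1),\dots,\omega_j(T_n))$. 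Since $R$ is $m_R$-adically complete and each $R[T_1,\dots,T_n]/J_j$ is a finite free $R$-module, taking the limit over both indices in the definition of the completed group ring yields $R[[A_n^{(\ell)}]]\cong\varprojlim_j R[T_1,\dots,T_n]/J_j$. Now $\omega_j(T)$ is a distinguished polynomial over $R$: it is monic of degree $\ell^j$, and all its lower coefficients are divisible by $\ell$ (since $\ell\mid\binom{\ell^j}{k}$ for $0<k<\ell^j$), hence lie in $m_R$. By the multivariable Weierstrass division theorem over the complete local ring $R$, the natural map $R[T_1,\dots,T_n]/J_j\to \Lambda/J_j\Lambda$ is therefore an isomorphism, so $R[[A_n^{(\ell)}]]\cong\varprojlim_j\Lambda/J_j\Lambda$.

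Next I would pass to the limit inside $\Lambda$. The binomial expansion gives $\omega_1(T)\in(\ell,T)^2$, and the identity $\omega_j=\ell\,\omega_{j-1}+\binom{\ell}{2}\omega_{j-1}^2+\cdots+\omega_{j-1}^{\ell}$, obtained by expanding $(1+\omega_{j-1})^{\ell}-1$, yields $\omega_j(T)\in(\ell,T)^{j+1}$ by induction; hence $J_j\Lambda\subset\mathfrak M^{j+1}$, and in particular $\bigcap_j J_j\Lambda=0$. So the canonical map $\Lambda\to\varprojlim_j\Lambda/J_j\Lambda$ is injective, and it is surjective because any compatible system lifts to elements $h_j\in\Lambda$ with $h_{j+1}-h_j\in J_j\Lambda\subset\mathfrak M^{j+1}$, so $(h_j)$ is $\mathfrak M$-adically Cauchy with a limit $h$, and $h-h_j=\lim_k(h_k-h_j)\in J_j\Lambda$ because finitely generated ideals of the Noetherian complete local ring $\Lambda$ are closed (Krull intersection theorem / Artin--Rees). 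Chaining the isomorphisms gives $R[[A_n^{(\ell)}]]\cong\Lambda$, and unwinding the identifications shows $\gamma_i\in A_n^{(\ell)}\subset R[[A_n^{(\ell)}]]$ is sent to $1+X_i$. This bijective ring homomorphism is moreover a homeomorphism: $R$ is profinite, so each $R[T_1,\dots,T_n]/J_j\cong R[A_n^{(\ell)}/\ell^j]$ is profinite and $\varprojlim_j$ of them is compact Hausdorff, while $\Lambda=\varprojlim_{i,m}(R/m_R^i)[X_1,\dots,X_n]/(X_1,\dots,X_n)^m$ is compact Hausdorff as well, and a continuous bijection between compact Hausdorff spaces is a homeomorphism. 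With the first paragraph, this establishes the Fact.

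The main obstacle is the passage to the limit just described, which needs three ingredients at once: (i) that the finite quotients $R[T_1,\dots,T_n]/J_j$ are \emph{exactly} the quotients $\Lambda/J_j\Lambda$, which is where Weierstrass division enters; (ii) the quantitative nesting $J_j\Lambda\subset\mathfrak M^{j+1}$, so that transition differences converge $\mathfrak M$-adically; and (iii) closedness of the ideals $J_j\Lambda$ in $\Lambda$, so that the limit still lies in each of them. Once the case $n=1$ is settled one could instead induct on $n$ using the general identity $R[[G\times H]]\cong R[[G]][[H]]$ for completed group rings over a complete local ring, but the multivariable argument above avoids that detour.
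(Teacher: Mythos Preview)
The paper does not prove this statement: it is recorded as a \emph{Fact} with a bare citation to \cite[p.509]{gabber1996faisceaux}, so there is no in-paper argument to compare against. Your reconstruction is the standard Iwasawa-algebra computation and is correct.

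Two small remarks. First, in the surjectivity step you implicitly use $J_k\Lambda\subset J_j\Lambda$ for $k\ge j$ (so that all the differences $h_{k+1}-h_k$ lie in the \emph{fixed} ideal $J_j\Lambda$ before you take the limit); this holds because $\omega_j(T)\mid\omega_k(T)$ whenever $\ell^j\mid\ell^k$, which you might state explicitly. Second, the homeomorphism claim can be made cleaner: your bijection was built level by level from isomorphisms of finite discrete rings $(R/m_R^i)[A_n^{(\ell)}/\ell^j]\cong (R/m_R^i)[T_1,\dots,T_n]/J_j$, so it is an isomorphism of inverse systems of finite rings and hence automatically a topological isomorphism of the profinite limits; the compact-Hausdorff argument then becomes unnecessary.
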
 

Gabber and Loeser introduce a scheme of $\ell$-adic characters. 
\begin{df}\label{df:cotorus}Write $R_n=\bar{\Q}_{\ell}\otimes_{\Z_{\ell}}\Z_{\ell}[[A_n^{(\ell)}]]$. Define  the ``cotorus"\index{cotorus} associated with  $A_n$ to be $\cC_{\ell}:=\Spec R_n$.\end{df}  By \cite[Prop.~A.2.2.3 (ii)]{gabber1996faisceaux}, the scheme $\cC_{\ell}$ is integral, Noetherian and regular. Its set of closed points coincides with $\cC_{\ell}(\bar{\Q}_{\ell})$, and it is Zariski dense in $\cC_{\ell}$. If $n>0$, then $\cC_{\ell}$ is \textbf{not} locally of finite type over $\bar{\Q}_{\ell}$. 
\begin{lm}\label{lm:extendcharacter}
	Every character $\chi:A_n^{(\ell)}\to \bar{\Q}_{\ell}^{\times}$ extends canonically to a surjective morphism $R_n\to \bar{\Q}_{\ell}$ of $\bar{\Q}_{\ell}$-algebras.
\end{lm}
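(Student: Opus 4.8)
The plan is to bring the character into the units of a finite extension of $\Q_{\ell}$, substitute it into power series via the Iwasawa presentation \eqref{eq:Iwasawaiso}, and then extend scalars to $\bar{\Q}_{\ell}$.

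First I would observe that, since $A_n^{(\ell)}$ is profinite, $\chi(A_n^{(\ell)})$ is a compact subgroup of $\bar{\Q}_{\ell}^{\times}$; by Lemma~\ref{lm:cptinL}~\ref{it:cptinOE} there is a finite subextension $E$ of $\bar{\Q}_{\ell}/\Q_{\ell}$ with $\chi(A_n^{(\ell)})\subset O_E^{\times}$. Let $m_E$ be the maximal ideal of $O_E$ and $k_E$ its residue field. Because $A_n^{(\ell)}\cong\Z_{\ell}^{\,n}$ is pro-$\ell$, so is its continuous image $\chi(A_n^{(\ell)})$; since $O_E^{\times}/(1+m_E)\cong k_E^{\times}$ is finite of order prime to $\ell$, the subgroup $1+m_E$ is the maximal pro-$\ell$ subgroup of $O_E^{\times}$, whence $\chi(A_n^{(\ell)})\subset 1+m_E$. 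In particular $t_i:=\chi(\gamma_i)-1\in m_E$ for $i=1,\dots,n$.

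Next I would apply \eqref{eq:Iwasawaiso} with $R=\Z_{\ell}$ to get an isomorphism of topological rings $\Z_{\ell}[[A_n^{(\ell)}]]\iso\Z_{\ell}[[X_1,\dots,X_n]]$ carrying $\gamma_i$ to $1+X_i$. As $O_E$ is an $m_E$-adically complete discrete valuation ring and each $t_i\in m_E$, the assignment $X_i\mapsto t_i$ extends to a continuous $\Z_{\ell}$-algebra homomorphism $\Z_{\ell}[[X_1,\dots,X_n]]\to O_E$: every power series in the $X_i$ with $\Z_{\ell}$-coefficients evaluates to a series in the $t_i$ whose terms tend to $0$, hence converges. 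Precomposing with \eqref{eq:Iwasawaiso} and postcomposing with $O_E\hookrightarrow\bar{\Q}_{\ell}$ yields a $\Z_{\ell}$-algebra homomorphism $\phi\colon\Z_{\ell}[[A_n^{(\ell)}]]\to\bar{\Q}_{\ell}$ with $\phi(\gamma_i)=1+t_i=\chi(\gamma_i)$. Restricted to $A_n^{(\ell)}$, $\phi$ is a continuous homomorphism into $\bar{\Q}_{\ell}^{\times}$ (its values are units, being invertible in the image of a group element), and it agrees with $\chi$ on the $\gamma_i$; as the $\gamma_i$ topologically generate $A_n^{(\ell)}$, this forces $\phi|_{A_n^{(\ell)}}=\chi$. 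Moreover $\phi$ is the unique continuous $\Z_{\ell}$-algebra homomorphism $\Z_{\ell}[[A_n^{(\ell)}]]\to\bar{\Q}_{\ell}$ extending $\chi$, since any such is determined by its values on the dense subring $\Z_{\ell}[A_n^{(\ell)}]$; this is the content of ``canonically'' (in particular $\phi$ depends neither on the auxiliary $E$ nor on the chosen basis).

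Finally, because $R_n=\bar{\Q}_{\ell}\otimes_{\Z_{\ell}}\Z_{\ell}[[A_n^{(\ell)}]]$, the base-change adjunction $\Hom_{\bar{\Q}_{\ell}\text{-}\mathrm{alg}}(R_n,\bar{\Q}_{\ell})\cong\Hom_{\Z_{\ell}\text{-}\mathrm{alg}}(\Z_{\ell}[[A_n^{(\ell)}]],\bar{\Q}_{\ell})$ turns $\phi$ into a $\bar{\Q}_{\ell}$-algebra homomorphism $\tilde{\phi}\colon R_n\to\bar{\Q}_{\ell}$, $\lambda\otimes b\mapsto\lambda\,\phi(b)$; it still restricts to $\chi$ on $A_n^{(\ell)}$, and it is surjective because $\tilde{\phi}(\lambda\otimes1)=\lambda$ for every $\lambda\in\bar{\Q}_{\ell}$. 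The one step that uses more than formalities is the containment $\chi(\gamma_i)-1\in m_E$ of the second paragraph: it is exactly what makes the power-series substitution converge, hence what makes the extension exist at all; everything else — convergence in a complete discrete valuation ring, the universal property of the completed group ring via \eqref{eq:Iwasawaiso}, and the tensor-product adjunction — is routine.
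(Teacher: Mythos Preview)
Your proof is correct and follows essentially the same route as the paper's: pass to a finite extension $E/\Q_{\ell}$, use the Iwasawa isomorphism \eqref{eq:Iwasawaiso} to substitute $X_i\mapsto\chi(\gamma_i)-1$, and then base-change to $\bar{\Q}_{\ell}$. If anything you are more careful than the paper, which simply asserts convergence ``by completeness of $E$'' without isolating the key point $\chi(\gamma_i)-1\in m_E$; your argument that the pro-$\ell$ image lands in $1+m_E$ makes this step explicit.
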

\begin{proof}
	There is a finite subextension $E/\Q_{\ell}$ in $\bar{\Q}_{\ell}$ containing all the $\chi(\gamma_i)$. 
	Then by completeness of $E$, for every $f=\sum_{\alpha\in \N^n}c_{\alpha}X^{\alpha}\in \Z_{\ell}[[X_1,\dots,X_n]]$, the series $\sum_{\alpha\in \N^n}c_{\alpha}\prod_{i=1}^n(\chi(\gamma_i)-1)^{\alpha_i}$ converges in $E$. Denote its limit by $f(\chi(\gamma_1)-1,\dots,\chi(\gamma_n)-1)$. The composition $\Z_{\ell}[[A_n^{(\ell)}]]\to E$ of   \eqref{eq:Iwasawaiso} followed by \[\Z_{\ell}[[X_1,\dots,X_n]]\to E, \quad f\mapsto f(\chi(\gamma_1)-1,\dots,\chi(\gamma_n)-1)\] extends $\chi$. It induces the stated surjection. The construction is independent of the choice of the $\Z_{\ell}$-basis $\{\gamma_1,\dots,\gamma_n\}$ of $A_n^{(\ell)}$.	
\end{proof} 
 By Lemma \ref{lm:extendcharacter}, for every $\chi\in \cC(A_n)_{\ell}$, the corresponding character $A_n^{(\ell)}\to \bar{\Q}_{\ell}^{\times}$ induces a surjection $R_n\to \bar{\Q}_{\ell}$. Let $\Psi(\chi)$  be the corresponding element of $\cC_{\ell}(\bar{\Q}_{\ell})$. Thus, there is a map \begin{equation}
	\label{eq:chartoscheme}\Psi: \cC(A_n)_{\ell}\to \cC_{\ell}(\bar{\Q}_{\ell}).
\end{equation} 
\begin{ft}[{\cite[p.519]{gabber1996faisceaux}}]
	The map  (\ref{eq:chartoscheme})  is  bijective. \end{ft}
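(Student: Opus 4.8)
The plan is to realize both sides of \eqref{eq:chartoscheme} as the open $\ell$-adic polydisk. Combining Definition~\ref{df:cotorus} with the Iwasawa isomorphism \eqref{eq:Iwasawaiso} and the definition $R_n=\bar{\Q}_{\ell}\otimes_{\Z_{\ell}}\Z_{\ell}[[A_n^{(\ell)}]]$, one gets $R_n\cong\bar{\Q}_{\ell}\otimes_{\Z_{\ell}}\Z_{\ell}[[X_1,\dots,X_n]]$, where $X_i$ corresponds to $\gamma_i-1$. By the properties of $\cC_{\ell}$ recalled after Definition~\ref{df:cotorus}, a closed point of $\cC_{\ell}$ is a $\bar{\Q}_{\ell}$-algebra homomorphism $R_n\to\bar{\Q}_{\ell}$, and by the universal property of base change this is the same datum as a $\Z_{\ell}$-algebra homomorphism $\phi\colon\Z_{\ell}[[X_1,\dots,X_n]]\to\bar{\Q}_{\ell}$. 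The key claim I would establish is that $\phi\mapsto(\phi(X_1),\dots,\phi(X_n))$ is a bijection from the set of such $\phi$ onto the open polydisk $D^n:=\{(a_1,\dots,a_n)\in\bar{\Q}_{\ell}^n:|a_i|_{\ell}<1\text{ for all }i\}$. Granting this, \eqref{eq:chartoscheme} becomes $\chi\mapsto(\chi(\gamma_1)-1,\dots,\chi(\gamma_n)-1)$—precisely the surjection produced in Lemma~\ref{lm:extendcharacter}—and bijectivity follows formally: recalling $\cC(A_n)_{\ell}\cong\cC(A_n^{(\ell)})$, a continuous character of $A_n^{(\ell)}\cong\Z_{\ell}^n$ is determined by its values on the topological generators $\gamma_i$ (injectivity), and for $(a_i)\in D^n$ one picks a finite subextension $E/\Q_{\ell}$ with all $a_i\in\mathfrak{m}_E$ and observes that $1+\mathfrak{m}_E$ is a pro-$\ell$ group—its finite quotients $(1+\mathfrak{m}_E)/(1+\mathfrak{m}_E^k)$ have $\ell$-power order—so $\gamma_i\mapsto 1+a_i$ extends to a continuous homomorphism $A_n^{(\ell)}\to 1+\mathfrak{m}_E\subset\bar{\Q}_{\ell}^{\times}$, giving a preimage (surjectivity).

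For the polydisk identification, surjectivity is the easy direction: for $(a_i)\in D^n$ put $E=\Q_{\ell}(a_1,\dots,a_n)$; the coefficients of an element of $\Z_{\ell}[[X_1,\dots,X_n]]$ are bounded, so $\sum_{\alpha}c_{\alpha}X^{\alpha}\mapsto\sum_{\alpha}c_{\alpha}a^{\alpha}$ converges in the complete field $E$ and defines a $\Z_{\ell}$-algebra homomorphism—the same convergence input as in Lemma~\ref{lm:extendcharacter}. Injectivity in turn follows once one knows that every such $\phi$ is automatically continuous for the $\mathfrak{m}$-adic topology, $\mathfrak{m}=(\ell,X_1,\dots,X_n)$: then $\phi$ is the unique continuous extension of its restriction to the dense subring $\Z_{\ell}[X_1,\dots,X_n]$, which depends only on the $\phi(X_i)$. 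Both this continuity and the well-definedness of the map—namely $|\phi(X_i)|_{\ell}<1$—reduce to the single assertion that $B:=\im(\phi)\subset\bar{\Q}_{\ell}$ is contained in the valuation ring $O_E$ of a finite extension $E/\Q_{\ell}$ with $\phi(X_i)$ in its maximal ideal: indeed $\phi(\mathfrak{m})$ is then the maximal ideal of $B$, contained in $\mathfrak{m}_E$, so the images $\phi(\mathfrak{m}^N)\subseteq\mathfrak{m}_E^N$ shrink to $0$.

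The hard part—and, I think, the real content of the statement—is this last assertion, since a priori $\phi$ is merely a ring map and the naive algebraic identities (geometric series, etc.) do not force $|\phi(X_i)|_{\ell}<1$. I would prove it by descent to one variable: fix $i$ and restrict $\phi$ to the subring $\Z_{\ell}[[X_i]]$, obtaining a $\Z_{\ell}$-algebra homomorphism $\psi\colon\Z_{\ell}[[X_i]]\to\bar{\Q}_{\ell}$ with $\psi(X_i)=\phi(X_i)$. Its kernel is a nonzero prime ideal of $\Z_{\ell}[[X_i]]$ not containing $\ell$ (the latter since $\psi(\ell)=\ell\neq0$, the former since otherwise $\psi$ would be injective whereas $X_i$ is transcendental over $\Q_{\ell}$ and $\bar{\Q}_{\ell}/\Q_{\ell}$ is algebraic); as $\Z_{\ell}[[X_i]]$ is a two-dimensional regular local ring, hence a UFD whose prime elements are, up to units and by Weierstrass preparation, $\ell$ and the irreducible distinguished polynomials, we get $\ker\psi=(f)$ with $f=X_i^d+c_{d-1}X_i^{d-1}+\dots+c_0$ and all $c_j\in\ell\Z_{\ell}$. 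Then $\psi$ factors through $\Z_{\ell}[[X_i]]/(f)=\Z_{\ell}[X_i]/(f)$, so $\phi(X_i)$ is a root of $f$; and the Newton polygon of a distinguished polynomial has only negative slopes, so all of its roots have positive valuation, i.e. $|\phi(X_i)|_{\ell}<1$. With this structural input the remaining steps are formal or amount to the convergence bookkeeping already used in Lemma~\ref{lm:extendcharacter}, so the main obstacle is exactly controlling a not-necessarily-continuous ring map out of $\Z_{\ell}[[X_1,\dots,X_n]]$.
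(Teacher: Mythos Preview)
The paper does not prove this statement; it is recorded as a Fact with a bare citation to \cite[p.519]{gabber1996faisceaux}. Your argument is therefore supplying a proof the paper omits, and its overall shape is correct: identify both sides with the open unit polydisk via the Iwasawa coordinates, the substantive point being that an arbitrary $\Z_{\ell}$-algebra map $\phi:\Z_{\ell}[[X_1,\dots,X_n]]\to\bar{\Q}_{\ell}$ is automatically continuous. The one-variable Weierstrass reduction you give for $|\phi(X_i)|_{\ell}<1$ is the right idea and is correct.

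There is, however, one logical slip. You announce that continuity reduces to the single assertion that $B=\im(\phi)\subset O_E$ with $\phi(X_i)\in\mathfrak{m}_E$, and then say you prove ``this last assertion'' by descent to one variable. But your one-variable argument only establishes $\phi(X_i)\in\mathfrak{m}_E$; it says nothing about $B\subset O_E$, and that inclusion is not ``convergence bookkeeping''---you cannot bound $\phi(g)$ for a general $g\in\Z_{\ell}[[X_1,\dots,X_n]]$ without already knowing $\phi$ is given by evaluation, which is exactly what you are trying to prove. The fix is implicit in what you already wrote: from each one-variable restriction you extracted a distinguished polynomial $f_i(X_i)\in\ker\phi$, so $\phi$ factors through $\Z_{\ell}[[X_1,\dots,X_n]]/(f_1,\dots,f_n)$, and iterated Weierstrass division shows this quotient is a free $\Z_{\ell}$-module of rank $\prod_i\deg f_i$. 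Hence $B$ is finite, so integral, over $\Z_{\ell}$, giving $B\subset O_E$ for $E=\Q_{\ell}(\phi(X_1),\dots,\phi(X_n))$; now your continuity argument and the rest of the proof go through. With this one sentence inserted, the argument is complete.
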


\subsection{Cotori are Baire}\label{sec:cotori>Baire}
Fix an uncountable, algebraically closed field $k$. The objective  of Section \ref{sec:cotori>Baire} is Lemma \ref{lm:XnBaire}, used in the proof of Theorem \ref{thm:normal}. We  show that over $k$, a reasonable scheme has uncountably many rational points outside a countable union of strict closed subsets.  
\subsubsection*{Baire schemes}
\begin{df}\label{df:k-Baire}A scheme $X$ over $k$ is called \emph{$k$-Baire}, if its dimension $\dim X$ is finite and
	$X(k)\setminus\cup_{i\ge 1}Z_i(k)$ is uncountable for every countable sequence   $\{Z_i\}_{i\ge1}$ of closed subschemes of $X$ with $\dim Z_i<\dim X$ for all $i$. A $k$-algebra $R$ is called $k$-Baire if   $\Spec(R)$ is $k$-Baire. \end{df}\begin{eg}
	Assume $k=\C$. Let $X$ be a complex algebraic variety with $\dim X>0$. The analytification $X^{\an}$ of $X$ is locally compact Hausdorff. Then by  the Baire category theorem (see, \textit{e,g.}, \cite[Cor.~25.4 a)]{willard70general}), $X$ is  $\C$-Baire.
\end{eg}
\begin{rk}\label{rk:Bairebasic}An algebraic curve over $k$ is $k$-Baire. In Definition \ref{df:k-Baire}, the underlying reduced induced closed subscheme $X_{\red}\to X$ induces a bijection $X_{\red}(k)\to  X(k)$, so $X$ is $k$-Baire if and only $X_{\red}$ is $k$-Baire.  If $X$ is irreducible and $k$-Baire, then $X\setminus\cup_{i\ge 1}Z_i$ is Zariski dense in $X$. \end{rk}
\begin{lm}\label{lm:upBaire}
	Let $f:X\to Y$ be a finite surjective morphism of schemes over $k$. If $Y$ is $k$-Baire, then so is $X$.
\end{lm}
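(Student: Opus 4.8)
The plan is to transport any ``bad'' countable family of closed subschemes of $X$ down to $Y$ along $f$, invoke the $k$-Baire property of $Y$, and then lift rational points back along $f$ using surjectivity together with the Nullstellensatz. First I would dispose of the dimension condition: a finite morphism is integral and closed, hence does not raise Krull dimension, and since $f$ is surjective one in fact has $\dim X=\dim Y<\infty$. Now fix a countable family $\{W_i\}_{i\ge1}$ of closed subschemes of $X$ with $\dim W_i<\dim X$ for every $i$. Since $f$ is closed, each $Z_i:=f(W_i)$, endowed with its reduced induced structure, is a closed subscheme of $Y$; and as $f$ has finite fibres it does not raise dimension on $W_i$ either, so $\dim Z_i\le\dim W_i<\dim X=\dim Y$. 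Applying the $k$-Baire hypothesis on $Y$ to the family $\{Z_i\}_{i\ge1}$, the set $S:=Y(k)\setminus\bigcup_{i\ge1}Z_i(k)$ is uncountable.

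Next comes the lifting step. Let $y\in S$. By surjectivity of $f$ the fibre $X_y=X\times_Y\Spec k(y)$ is nonempty; since $f$ is finite and $k(y)=k$, it is the spectrum of a nonzero finite-dimensional $k$-algebra, so quotienting by a maximal ideal and using that $k$ is algebraically closed yields a point of $X_y(k)$. Pick any $x_y\in X(k)$ with $f(x_y)=y$. Then $x_y\notin\bigcup_i W_i(k)$: were $x_y\in W_i(k)$, the image point of $x_y$ would lie in $W_i$, forcing $y=f(x_y)\in f(W_i)=Z_i$, and as $y$ is a $k$-point and $Z_i$ is reduced this would give $y\in Z_i(k)$, against $y\in S$. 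The map $y\mapsto x_y$ is injective because $f(x_y)=y$, so its image is a subset of $X(k)\setminus\bigcup_i W_i(k)$ of cardinality $|S|$. Hence $X(k)\setminus\bigcup_i W_i(k)$ is uncountable, and, together with $\dim X<\infty$, this shows $X$ is $k$-Baire.

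Honestly there is no serious obstacle here; the only points needing a little care are that $f$, being finite, is both closed and dimension-nonincreasing (so that small closed subschemes of $X$ really do push forward to small closed subschemes of $Y$), and that each fibre of $f$ over a $k$-rational point of $Y$ genuinely contains a $k$-rational point --- which is precisely where the algebraic closedness of $k$ enters, via the Nullstellensatz.
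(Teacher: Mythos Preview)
Your proof is correct and follows essentially the same strategy as the paper: push the closed subschemes down along $f$, apply the $k$-Baire property of $Y$, and lift $k$-points back using that $k$ is algebraically closed. The only cosmetic difference is that the paper passes through the full preimages $Z'_i:=f^{-1}(f(W_i))$ to get a surjection $X(k)\setminus\bigcup_i Z'_i(k)\to Y(k)\setminus\bigcup_i f(W_i)(k)$, whereas you construct an explicit section $y\mapsto x_y$; and for the dimension step the paper invokes \cite[\href{https://stacks.math.columbia.edu/tag/0ECG}{Tag 0ECG}]{stacks-project} (integral surjective implies equal dimensions) applied to $W_i\to f(W_i)$, which is the precise justification behind your ``finite fibres do not raise dimension'' claim.
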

\begin{proof}
	Let $\{Z_i\}_i$ be a sequence of closed subschemes of $X$ with $\dim Z_i<\dim X$. Since $f$ is a closed morphism, every $Y_i:=f(Z_i)$ is closed in $Y$.
	Endow each $Y_i$ with the reduced induced structure. Let $Z'_i:=f^{-1}(Y_i)=Y_i\times_YX$. Then there is a canonical closed immersion $Z_i\to Z'_i$.   The restriction  $Z_i\to Y_i$ of $f$ is a finite surjective morphism. By \cite[\href{https://stacks.math.columbia.edu/tag/0ECG}{Tag 0ECG}]{stacks-project}, one has $\dim X=\dim Y$ and $\dim Y_i= \dim Z_i$. In particular, $\dim X$ is finite and $\dim Y_i<\dim Y$.   
	
	As $k$ is algebraically closed, the induced map $X(k)\to Y(k)$ is surjective.  Then the induced map \[X(k)\setminus(\cup_{i\ge 1}Z'_i(k))\to Y(k)\setminus(\cup_iY_i(k))\] is surjective. Because $Y$ is $k$-Baire, the target is uncountable. 
	Then $X(k)\setminus(\cup_{i\ge 1}Z_i(k))$ is also uncountable, as it contains the source.
\end{proof}
\begin{lm}\label{lm:Bairecomp}
	Let $X$ be a Noetherian  scheme over $k$.
	\begin{enumerate}\item\label{it:component} Then $X$ is $k$-Baire if and only if $X$ has an  irreducible component $C$ with $\dim C=\dim X$, such that the underlying reduced induced closed subscheme $C$ is $k$-Baire.
		\item\label{it:hypersurface} Assume that $n:=\dim X-1$ is finite. If $X$ has uncountably many (pairwise set-theoretically distinct) irreducible, $k$-Baire, closed subschemes of dimension $n$, then $X$ is $k$-Baire. \end{enumerate}
\end{lm}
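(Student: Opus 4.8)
The plan is to derive both parts from one elementary principle --- a countable union of countable sets is countable --- together with the Noetherian hypothesis, which guarantees that $X$, and every closed subscheme of it, has only finitely many irreducible components. Two observations will be used repeatedly: at the start of each implication I will note that the scheme in question has finite dimension, so that Definition~\ref{df:k-Baire} is applicable; and for a fiber product $Z\times_X W$ of $X$-schemes one has $(Z\times_X W)(k)=Z(k)\times_{X(k)}W(k)$, so a $k$-point of $X$ lies in $Z\cap W$ exactly when it lies in both $Z$ and $W$, while (as $\Spec k$ is reduced) a $k$-point of $X$ lies in a reduced closed subscheme exactly when its image point does.

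For part~\ref{it:component}, I would write $X=C_1\cup\dots\cup C_m$ with the $C_i$ the (finitely many) irreducible components, equipped with reduced structure, and ordered so that $\dim C_i=\dim X$ exactly for $1\le i\le r$; note $\dim X<\infty$ and $r\ge 1$. For the forward implication, argue by contradiction: if none of $C_1,\dots,C_r$ were $k$-Baire, then (their dimensions being finite) each such $C_i$ would admit a countable family $\{Z_{i,j}\}_{j\ge 1}$ of closed subschemes with $\dim Z_{i,j}<\dim X$ and with $C_i(k)\setminus\bigcup_j Z_{i,j}(k)$ countable. Regarding the $Z_{i,j}$ ($1\le i\le r$) and the lower-dimensional components $C_{r+1},\dots,C_m$ all as closed subschemes of $X$ of dimension $<\dim X$, this is a countable family, and a $k$-point of $X$ avoiding every member of it lies in some $C_i$ with $i\le r$ (it avoids the $C_i$ with $i>r$) and avoids every $Z_{i,j}$; hence $X(k)$ minus the union of the $k$-points of this family is contained in $\bigcup_{i=1}^r\bigl(C_i(k)\setminus\bigcup_j Z_{i,j}(k)\bigr)$, a finite union of countable sets --- contradicting that $X$ is $k$-Baire. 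For the converse, let $C$ be a top-dimensional component that is $k$-Baire, and let $\{Z_j\}_j$ be a countable family of closed subschemes of $X$ with $\dim Z_j<\dim X=\dim C$; then each $C\cap Z_j$ is a closed subscheme of $C$ of dimension $<\dim C$, so $C(k)\setminus\bigcup_j(C\cap Z_j)(k)$ is uncountable, and it equals $C(k)\cap\bigl(X(k)\setminus\bigcup_j Z_j(k)\bigr)$, whence $X(k)\setminus\bigcup_j Z_j(k)$ is uncountable; as $\dim X=\dim C<\infty$, this says $X$ is $k$-Baire. Remark~\ref{rk:Bairebasic} lets me pass freely between $C$ and $C_{\red}$.

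For part~\ref{it:hypersurface}, let $\{W_\alpha\}_{\alpha\in I}$, with $I$ uncountable, be the given family of irreducible $k$-Baire closed subschemes of dimension $n$, and let $\{Z_j\}_j$ be a countable family of closed subschemes of $X$ with $\dim Z_j<\dim X=n+1$, that is, $\dim Z_j\le n$. The key step is that for each fixed $j$ only finitely many $\alpha$ satisfy $W_\alpha\subseteq Z_j$ set-theoretically: such a $W_\alpha$ is an irreducible closed subset of dimension $n$ of the Noetherian scheme $Z_j$ of dimension $\le n$, hence one of its finitely many top-dimensional irreducible components, and since the $W_\alpha$ are pairwise set-theoretically distinct only finitely many indices occur. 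Thus $\{\alpha : W_\alpha\subseteq Z_j\text{ for some }j\}$ is countable, so I may choose $\alpha_0\in I$ with $W_{\alpha_0}\not\subseteq Z_j$ for all $j$; then $W_{\alpha_0}\cap Z_j$ is a proper closed subset of the irreducible $W_{\alpha_0}$, so has dimension $<n=\dim W_{\alpha_0}$. Applying $k$-Baireness of $W_{\alpha_0}$ to the family $\{W_{\alpha_0}\cap Z_j\}_j$ and repeating the bookkeeping of the converse of part~\ref{it:component} shows $X(k)\setminus\bigcup_j Z_j(k)$ is uncountable; since $\dim X=n+1<\infty$, $X$ is $k$-Baire.

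The one step that is not pure formalism --- and the one I would be most careful with --- is the finiteness claim in part~\ref{it:hypersurface}: a fixed closed subscheme of dimension $\le n$ can set-theoretically absorb only finitely many of the $W_\alpha$. It rests precisely on Noetherianity (finiteness of the set of irreducible components of $Z_j$) and on the fact that an $n$-dimensional irreducible closed subset of a scheme of dimension $\le n$ must be one of its components, which then pins it down among the pairwise set-theoretically distinct $W_\alpha$. Everything else is careful but routine manipulation of Definition~\ref{df:k-Baire}, the recurring move being that the $k$-points of $C$ (resp.\ $W_{\alpha_0}$) avoiding all $C\cap Z_j$ (resp.\ $W_{\alpha_0}\cap Z_j$) are exactly those avoiding all $Z_j$, so they embed into $X(k)\setminus\bigcup_j Z_j(k)$.
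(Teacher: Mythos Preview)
Your proof is correct and follows essentially the same approach as the paper. The only cosmetic difference is in part~\ref{it:hypersurface}: the paper first decomposes each $Z_i$ into its finitely many irreducible components and then picks a $W_\alpha$ set-theoretically distinct from all of them, whereas you bound how many $W_\alpha$ can be contained in a fixed $Z_j$; both arguments pivot on the same Noetherian finiteness and lead to the same conclusion.
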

\begin{proof} 	\begin{enumerate}\item Assume that there is such a component $C$.  Consider a sequence of closed subschemes $\{Z_i\}_{i\ge1}$ of $X$ with $\dim Z_i<\dim X$ for all $i\ge1$. Then for every $i\ge1$, one has $\dim C\cap Z_i\le \dim Z_i<\dim X=\dim C$.  Since $C$ is $k$-Baire, the set $C(k)\setminus \cup_i (C\cap Z_i)(k)$ is uncountable. Therefore, $X(k)\setminus\cup_iZ_i(k)$ is also uncountable.

Conversely, assume that every component of $X$ of maximum dimension is not $k$-Baire. As	$X$ is Noetherian, one can write $X=\cup_{j=1}^nC_j$ as a finite union of the irreducible components.  For every $j$ with $\dim C_j=\dim X$, the scheme $C_j$ is not $k$-Baire.  Therefore, there is a sequence $\{Z^j_i\}_{i\ge 1}$ of closed subschemes of $C_j$ such that $\dim Z^j_i<\dim C_j$  for all $i$ and that $C_j(k)\setminus\cup_iZ^j_i(k)$ is  countable. The finite family of components $C_k$ with $\dim C_k<\dim X$, joint with the sequences $\{Z_i^j\}_i$ for all $j$ with $\dim C_j=\dim X$, gives
a countable family $\{Z_s\}_s$ of closed subschemes of $X$ with $\dim Z_s<\dim X$ for all $s$. Then $X(k)\setminus(\cup_sZ_s(k))$ is countable, so $X$ is not $k$-Baire. 

\item Consider a sequence of closed subschemes $\{Z_i\}_{i\ge1}$ of $X$ with $\dim Z_i<\dim X$ for all $i\ge1$. Every $Z_i$ is a Noetherian scheme, so it has only finitely many irreducible components. The set of irreducible components of the family $\{Z_i\}_i$ is  countable. Thus, one may assume that every $Z_i$ is irreducible. By assumption, $X$ has an $n$-dimensional, irreducible, $k$-Baire closed subscheme $X'$ which is set-theoretically distinct from any $Z_i$. For every $i\ge1$, because $\dim X'=n\ge \dim Z_i$ and $Z_i$ is irreducible, one has $X'\not\subset Z_i$ and $X'\cap Z_i\neq X'$. Since $X'$ is irreducible, one has $\dim(X'\cap Z_i)<\dim X'$. As $X'$ is $k$-Baire, the set $X'(k)\setminus\cup_{i\ge 1}(X'\times_XZ_i)(k)$ is uncountable. It is a subset of $X(k)\setminus\cup_{i\ge 1}Z_i(k)$. Therefore, $X$ is $k$-Baire.\end{enumerate}
\end{proof}
Lemma \ref{lm:Qing} is well-known.
\begin{lm}\label{lm:Qing}
	If $X$  is a finite type scheme over $k$ with $\dim X>0$, then $X$ is $k$-Baire.
\end{lm}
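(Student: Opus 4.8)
The plan is a short dévissage that reduces the statement to the case of affine space, where the hyperplane‑pencil criterion of Lemma~\ref{lm:Bairecomp} applies directly. The preliminary step is to check that $\mathbf{A}^m_k$ is $k$-Baire for every $m\ge 1$, by induction on $m$. For $m=1$, affine space is a curve, hence $k$-Baire by Remark~\ref{rk:Bairebasic}. For $m\ge 2$, consider the parallel hyperplanes $H_c=\{x_m=c\}\cong\mathbf{A}^{m-1}_k$ for $c\in k$: these are irreducible closed subschemes of $\mathbf{A}^m_k$ of dimension $m-1=\dim\mathbf{A}^m_k-1$, each $k$-Baire by the inductive hypothesis, pairwise disjoint and hence set-theoretically distinct, and there are uncountably many of them since $k$ is uncountable. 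Lemma~\ref{lm:Bairecomp}~\ref{it:hypersurface} then gives that $\mathbf{A}^m_k$ is $k$-Baire.

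For the lemma itself, I would first apply Lemma~\ref{lm:Bairecomp}~\ref{it:component} to replace $X$ by the reduced induced structure on one of its irreducible components of maximal dimension; this reduces us to the case where $X$ is integral, and we set $d:=\dim X>0$. Choose a nonempty affine open $U=\Spec B\subset X$. Then $B$ is a finitely generated $k$-algebra, and $\dim U=d$ because $U$ is dense in the irreducible scheme $X$ (Lemma~\ref{lm:constructibledim}~\ref{it:clsdim}). By Noether normalization there is a finite injective $k$-algebra homomorphism $k[y_1,\dots,y_d]\hookrightarrow B$; the induced morphism $U\to\mathbf{A}^d_k$ is finite, and surjective since the extension is integral (lying-over). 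As $\mathbf{A}^d_k$ is $k$-Baire by the preliminary step, Lemma~\ref{lm:upBaire} shows that $U$ is $k$-Baire.

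Finally one transfers the Baire property from the dense open $U$ back to $X$: $\dim X=d<\infty$, and given any countable family $\{Z_i\}_{i\ge 1}$ of closed subschemes of $X$ with $\dim Z_i<d$, each $Z_i\cap U$ is closed in $U$ with $\dim(Z_i\cap U)\le\dim Z_i<d=\dim U$, so $U(k)\setminus\bigcup_{i\ge 1}(Z_i\cap U)(k)$ is uncountable; it is contained in $X(k)\setminus\bigcup_{i\ge 1}Z_i(k)$, which is therefore uncountable as well. I do not expect a genuine obstacle here — the statement is classical — the only points that deserve an explicit word being the production of a finite surjection onto $\mathbf{A}^d_k$ via Noether normalization and the elementary transfer of $k$-Baireness along the dense open immersion $U\hookrightarrow X$.
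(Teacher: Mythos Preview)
Your argument is correct and follows essentially the same route as the paper: reduce to an irreducible component (Lemma~\ref{lm:Bairecomp}~\ref{it:component}), pass to a dense affine open, apply Noether normalization and Lemma~\ref{lm:upBaire} to reduce to $\mathbf{A}^m_k$, and handle the latter by induction via the hyperplane criterion Lemma~\ref{lm:Bairecomp}~\ref{it:hypersurface}. The only cosmetic differences are the order of presentation and that you spell out the transfer of $k$-Baireness along the dense open immersion $U\hookrightarrow X$, which the paper leaves implicit.
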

\begin{proof}
	Since $X$ is of finite type over $k$, its dimension $m$ is finite and $X$ has only finitely many irreducible components. Replacing $X$ with an irreducible component of dimension $m$, one may that assume $X$ is irreducible. Then by \cite[Exercise 3.20 (e), p.94]{hartshorne2013algebraic}, every nonempty open subset of $X$ has dimension $m$. Replacing $X$ by an affine open, one may assume that $X$ is affine.  	By Noether's normalization lemma, there is a finite surjective morphism $p:X\to \mathbf{A}^m_k$ over $k$. By Lemma \ref{lm:upBaire},	 one may assume $X=\mathbf{A}^m_k$.

	By induction on $m>0$, we prove that $\mathbf{A}_k^m$ is $k$-Baire. When $m=1$, one has $\dim \mathbf{A}^1_k=1$, and $\mathbf{A}^1_k(k)$ is uncountable. By Remark \ref{rk:Bairebasic},  $\mathbf{A}^1_k$ is $k$-Baire. Assume the statement  for $m-1$ with $m\ge 2$. 	The set of hyperplanes in $\mathbf{A}^m_k$ is uncountable. By the inductive hypothesis, every hyperplane is $k$-Baire. From Lemma \ref{lm:Bairecomp} \ref{it:hypersurface}, so is $\mathbf{A}^m_k$. The induction is completed.
\end{proof}\subsubsection*{Baireness of cotori}
We  show that every positive dimensional cotorus  is $\bar{\Q}_{\ell}$-Baire.  
\begin{df}[{\cite[Def.~1, p.205]{bosch1984non}}]
Let $A$ be a $k$-algebra, and let $A[X]\to B$ be an injective ring map. We say that $B$ is $k$-\emph{Rückert} over $A$ if there is a nonempty family $W$ of \emph{monic} polynomials in $A[X]$ such that the following axioms are fulfilled.
\begin{enumerate}
	\item\label{it:Wprod} If $f,g\in A[X]$ are monic polynomials with $fg\in W$, then $f,g\in W$.
	\item\label{it:modw} For every $w\in W$, the $A$-algebra $B/w $ is isomorphic to  $A[X]/w $.
	\item\label{it:aut} For every $b\in B\setminus\{0\}$, there is an automorphism $\sigma$ of the $k$-algebra $B$  and a unit $u\in B^{\times}$ satisfying $u\sigma(b)\in W$. 
\end{enumerate}
\end{df}\begin{rk}\label{rk:W=1}From Axiom \ref{it:Wprod}, one gets $1\in W$. If $W=\{1\}$, then by Axiom \ref{it:aut}, for every $b\in B\setminus\{0\}$, one has $b\in B^{\times}$, \textit{i.e.},  $B$ is a field. Conversely, if $B$ is a field, then $B$ is $k$-Rückert over $A$ with $W=\{1\}$.

If $W\neq\{1\}$, then $\Spec(B)\to \Spec(A)$ is surjective. Indeed, take $w(\neq 1)\in W$.  By Axiom \ref{it:modw}, there is an $A$-isomorphism $B/w\to A[X]/w$, hence an isomorphism $\Spec(A[X]/w)\to \Spec(B/w)$ of schemes over $A$. Because $w$ is a monic polynomial different from $1$,  the ring map $A\to A[X]/w$ is injective and finite. The induced morphism $\Spec(A[X]/w)\to \Spec(A)$ is surjective, so $\Spec(B/w)\to \Spec(A)$  is surjective.  \end{rk}
For a commutative ring $R$ and an ideal $I\subset R$, let $V_R(I)=\Spec R/I(\subset \Spec R)$. For  $r\in R$, let $V_R(r)=V_R(rR)$.  Lemma \ref{lm:lift} is used in the induction step of the proof of Lemma \ref{lm:XnBaire}.
\begin{lm}\label{lm:lift}
Let  $A$ be Noetherian $k$-algebra of  dimension $n$. Let $B$ be a domain, but not a field, containing $A[X]$. Assume that $B$ is $k$-Rückert over $A$. 
\begin{enumerate}
	\item\label{it:liftdim} The ring $B$ is Noetherian of  dimension $n+1$.
	\item\label{it:liftBaire} Suppose  that $A$ is $k$-Baire. Let $S$ be an uncountable subset of $A$ such that for every $s\in S$, one has $\dim V_A(s)=n-1$. Suppose that the family $\{V_A(s)\}_{s\in S}$  is pairwise disjoint. Then $B$ is   $k$-Baire.
\end{enumerate}
\end{lm}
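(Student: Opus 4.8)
The plan is to control $B$ by means of the module-finite $A$-algebras $A[X]/wA[X]$ ($w\in W$), exploiting the Weierstrass-type Axiom \ref{it:aut}. Note that $A$ is a domain, being a subring of $B$. Since $B$ is not a field, Remark \ref{rk:W=1} gives $W\neq\{1\}$ and the surjectivity of $\Spec B\to\Spec A$. Fix $w\in W\setminus\{1\}$: it is monic of degree $\geq 1$, hence a nonzero non-unit of $B$, and by Axiom \ref{it:modw} there is a ring isomorphism $B/wB\cong A[X]/wA[X]$; as $A[X]/wA[X]$ is a free $A$-module of positive rank, the structure map $A\to B/wB$ is finite and injective, so $B/wB$ is Noetherian and, by the going-up theorem, $\dim B/wB=\dim A=n$.

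For Part \ref{it:liftdim} I would first establish Noetherianity. Given a nonzero ideal $I\subset B$, choose $0\neq b\in I$; if $b$ is a unit then $I=B$, and otherwise Axiom \ref{it:aut} provides $\sigma\in\Aut_k(B)$ and $u\in B^\times$ with $w':=u\sigma(b)\in W$, where $w'\neq 1$ (else $b$ would be a unit). As $\sigma$ is a ring automorphism, $\sigma(bB)=\sigma(b)B=w'B$, so $\sigma$ descends to an isomorphism $B/bB\cong B/w'B\cong A[X]/w'A[X]$, a Noetherian ring; hence $I/bB$ is finitely generated, and lifting generators together with $b$ shows $I$ is finitely generated. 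For the dimension, $\dim B\geq n+1$ follows by lifting a length-$n$ chain of primes of $B/wB$ to a length-$n$ chain in $B$ whose members all contain $w\neq 0$ and prepending $(0)$ (using that $B$ is a domain). For $\dim B\leq n+1$, given a maximal ideal $\mathfrak m\subset B$, choose $0\neq b\in\mathfrak m$ and apply Axiom \ref{it:aut} as above; then $\sigma^{-1}(w')\in\mathfrak m$ and $B/\sigma^{-1}(w')B\cong A[X]/w'A[X]$ has dimension $n$, so Krull's principal ideal theorem gives $\dim B_{\mathfrak m}\leq \dim(B/\sigma^{-1}(w')B)_{\mathfrak m}+1\leq n+1$; the supremum over $\mathfrak m$ yields $\dim B\leq n+1$, whence $\dim B=n+1$.

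For Part \ref{it:liftBaire} note first that $n\geq 1$: if $\dim A=0$ then $A$, being a Noetherian domain of dimension $0$, is a field, and a field is never $k$-Baire (its spectrum is a single point with at most one $k$-point), so the hypothesis that $A$ is $k$-Baire forces $\dim A\geq 1$; by Part \ref{it:liftdim}, $\dim B=n+1$. Fix $s\in S$. Since $\dim V_A(s)=n-1<n$ we have $s\neq 0$, and since $\Spec B\to\Spec A$ is surjective and $V_A(s)\neq\emptyset$, $s$ is not a unit of $B$. Applying Axiom \ref{it:aut} to $b=s$ yields $\sigma\in\Aut_k(B)$, $u\in B^\times$ and $w_s:=u\sigma(s)\in W\setminus\{1\}$, and as above $\sigma$ descends to a ring isomorphism $B/sB\cong B/w_sB\cong A[X]/w_sA[X]$; the composite $A\to A[X]/w_sA[X]$ is finite and injective, hence induces a finite surjective morphism $\Spec(A[X]/w_sA[X])\to\Spec A$ that preserves dimension. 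Since $A$ is $k$-Baire, Lemma \ref{lm:upBaire} yields that $V_B(s)=\Spec B/sB$ is $k$-Baire, of dimension $n=\dim B-1$, and Lemma \ref{lm:Bairecomp} \ref{it:component} gives an irreducible component $C_s$ (with reduced structure) of $V_B(s)$, of dimension $n$, that is $k$-Baire. For $s\neq s'$ in $S$, pairwise disjointness of the $V_A(s)$ means $sA+s'A=A$, hence $sB+s'B=B$, hence $V_B(s)\cap V_B(s')=\emptyset$, so $C_s\cap C_{s'}=\emptyset$ and $C_s\neq C_{s'}$ (both nonempty since $n\geq 1$). Thus $\{C_s\}_{s\in S}$ is an uncountable family of pairwise distinct irreducible $k$-Baire closed subschemes of $\Spec B$ of dimension $\dim B-1$, and Lemma \ref{lm:Bairecomp} \ref{it:hypersurface} concludes that $B$ is $k$-Baire.

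I expect the only real subtlety to be the recurring use of Axiom \ref{it:aut}: the automorphism $\sigma$ it supplies need not preserve $A$, so $B/sB$ is not in any evident way $k$-Rückert over $A/sA$; the argument survives because one only needs the bare ring isomorphism $B/sB\cong A[X]/w_sA[X]$, whose target is module-finite over $A$ for its standard (untwisted) $A$-algebra structure, and that is all Lemma \ref{lm:upBaire} and the dimension bounds require. Everything else is routine dimension theory — Krull's principal ideal theorem and going-up — together with the stability properties of $k$-Baire schemes already recorded in Lemmas \ref{lm:upBaire} and \ref{lm:Bairecomp}.
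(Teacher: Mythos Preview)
Your proof is correct and follows essentially the same route as the paper: transport $B/bB$ to $A[X]/w$ via the $k$-automorphism of Axiom~\ref{it:aut}, use that $A[X]/w$ is module-finite over $A$ to read off dimension and (via Lemma~\ref{lm:upBaire}) $k$-Baireness, then assemble with Lemma~\ref{lm:Bairecomp}~\ref{it:hypersurface}. The only cosmetic differences are that the paper cites \cite[Prop.~2, p.206]{bosch1984non} for Noetherianity (where you reprove it directly) and bounds $\dim B$ by a bare chain argument rather than invoking Krull's principal ideal theorem; your closing remark about $\sigma$ not fixing $A$ is exactly the point that makes the argument work.
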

\begin{proof}By Axiom \ref{it:aut}, for every $b\in B\setminus(B^{\times}\cup\{0\})$,  there is an automorphism $\sigma$ of the $k$-algebra  $B$  and a unit $u\in B^{\times}$ such that $w:=u\sigma(b)$ is in $W$. Since $b$ is not a unit, one has $w\neq1$. By Axiom \ref{it:modw}, the $A$-algebra $B/w$ is isomorphic to $A[X]/w$. Since $w(\neq1)$ is a monic polynomial over $A$, the ring map $A\to A[X]/w$ is injective finite.  
\begin{enumerate}
	\item  One has 	\begin{equation}\label{eq:dimB/b}\dim B/b= \dim B/w=\dim A[X]/w\labelrel={myeq:00ok}\dim A=n,\end{equation}where \eqref{myeq:00ok} uses  \cite[\href{https://stacks.math.columbia.edu/tag/00OK}{Tag 00OK}]{stacks-project}.
	The domain $B$ is not a field, so $\dim B=n+1$. By \cite[Prop.~2, p.206]{bosch1984non}, the ring $B$ is Noetherian. 
	
	\item The morphism $\Spec A[x]/w\to \Spec A$	is finite surjective. Then by Lemma \ref{lm:upBaire}, the algebra $A[X]/w$ is $k$-Baire. 
	As $\sigma$ is over $k$, the $k$-algebra $B/b$ is isomorphic to $B/w$. Then $B/b$ is  $k$-Baire. 
	
	For every $s\in S$, one has $\dim V_A(s)<\dim A$, so $s\neq0$.
	 From Remark \ref{rk:W=1}, as $B$ is not a field, the morphism $\Spec(B)\to \Spec(A)$ is surjective.	The preimage of $V_A(s)$ under the surjection $\Spec(B)\to \Spec(A)$ is $V_B(s)$, so  $V_B(s)$ is nonempty. In particular, $s\notin B^{\times}$ and $B/s$ is $k$-Baire.  Moreover, the family $\{V_B(s)\}_{s\in S}$ is pairwise disjoint. By (\ref{eq:dimB/b}), one gets $\dim V_B(s)=n$.
	
	By Part \ref{it:liftdim}, $B$ is Noetherian. Then by Lemma \ref{lm:Bairecomp} \ref{it:component}, for every $s\in S$,  there is a $k$-Baire irreducible component $C_s\subset \Spec(B/s)$ of dimension $n$. The family  $\{C_s\}_{s\in S}$ is pairwise disjoint. From Lemma \ref{lm:Bairecomp} \ref{it:hypersurface}, $B$ is $k$-Baire.
\end{enumerate}
\end{proof}

Set $S_n:=\bar{\Q}_{\ell}\otimes_{\Z_{\ell}}\Z_{\ell}[[X_1,\dots,X_n]])$. By \cite[Prop.~3.2.2 (1)]{gabber1996faisceaux}, the natural morphism $S_n\to \bar{\Q}_{\ell}[[X_1,\dots,X_n]]$ is injective. Then the isomorphism (\ref{eq:Iwasawaiso}) identifies $R_n$  with the $\bar{\Q}_{\ell}$-subalgebra $S_n\subset \bar{\Q}_{\ell}[[X_1,\dots,X_n]]$. 
\begin{ft}\label{ft:AnAn-1}
For every integer $n\ge0$,
\begin{enumerate}
	\item\label{it:Andim} \textup{(\cite[Thm.~A.2.1, Prop,~A.2.2.1]{gabber1996faisceaux})}  the ring	$S_n$ is  a Noetherian, regular, Jacobson domain of Krull dimension $n$;
	\item \textup{(\cite[Prop A.2.2.2, proof of A.2.2.3 (ii)]{gabber1996faisceaux})}  $S_{n+1}$ is $\bar{\Q}_{\ell}$-Rückert over $S_n$.
\end{enumerate} 
\end{ft}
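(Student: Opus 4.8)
The plan is to prove both assertions together by induction on $n\ge0$, the base case $S_0=\bar\Q_\ell$ (a field) being trivial. One works throughout with the presentation of $S_n$ as a filtered colimit $S_n=\varinjlim_E O_E[[X_1,\dots,X_n]][\ell^{-1}]$ over the finite subextensions $E/\Q_\ell$ of $\bar\Q_\ell$, with faithfully flat transition maps; this comes from $S_n=\bar\Q_\ell\otimes_{\Q_\ell}\Q_\ell[[X_1,\dots,X_n]]$ together with $E\otimes_{\Z_\ell}\Z_\ell[[X_1,\dots,X_n]]=O_E[[X_1,\dots,X_n]]$ and $\Q_\ell[[X_1,\dots,X_n]]=\Z_\ell[[X_1,\dots,X_n]][\ell^{-1}]$. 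The inductive step has two halves: first one shows that $S_{n+1}$ is $\bar\Q_\ell$-Rückert over $S_n$ (with respect to $S_n[X]\hookrightarrow S_{n+1}$, $X\mapsto X_{n+1}$), which is the second assertion; then one deduces the first assertion for $S_{n+1}$, mostly from Lemma \ref{lm:lift}.

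For the Rückert structure one takes $W$ to be the set of monic polynomials $w=X_{n+1}^d+c_{d-1}X_{n+1}^{d-1}+\dots+c_0\in S_n[X_{n+1}]$ all of whose coefficients $c_i$ lie in the maximal ideal of $O_E[[X_1,\dots,X_n]]$ for some finite $E/\Q_\ell$ (call these \emph{distinguished}). Axiom \ref{it:Wprod} is immediate: reducing a monic factorization $w=fg$ modulo that maximal ideal and using unique factorization of monic polynomials over the residue field forces $f$ and $g$ distinguished. For Axiom \ref{it:aut}, given $0\ne b\in S_{n+1}$ one chooses $E$ with $b\in O_E[[X_1,\dots,X_{n+1}]][\ell^{-1}]$; since $\ell$ and every nonzero element of $\bar\Q_\ell$ are units of $S_{n+1}$, one may multiply $b$ by a unit to assume $b\in O_E[[X_1,\dots,X_{n+1}]]$ with $b$ nonzero modulo a uniformizer of $O_E$; a substitution automorphism $X_i\mapsto X_i+X_{n+1}^{c_i}$ ($1\le i\le n$, $c_i$ large), which is defined over $\Z_\ell$ hence restricts to $S_{n+1}$ and fixes $\bar\Q_\ell$, brings $b$ into $X_{n+1}$-regular position modulo the maximal ideal of $O_E[[X_1,\dots,X_n]]$, and classical Weierstrass preparation in the complete Noetherian local ring $O_E[[X_1,\dots,X_{n+1}]]=O_E[[X_1,\dots,X_n]][[X_{n+1}]]$ factors $b$ as a unit times a distinguished polynomial $P\in O_E[[X_1,\dots,X_n]][X_{n+1}]\subset S_n[X_{n+1}]$. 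For Axiom \ref{it:modw}, classical Weierstrass division by a distinguished $w$ gives $O_E[[X_1,\dots,X_{n+1}]]/(w)\cong O_E[[X_1,\dots,X_n]][X_{n+1}]/(w)$; inverting $\ell$ and passing to the colimit over $E$ (uniqueness of the division makes it compatible with the transition maps) yields the required $S_n$-algebra isomorphism $S_{n+1}/(w)\cong S_n[X_{n+1}]/(w)$.

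Granting this, the first assertion for $S_{n+1}$ follows quickly. The ring $S_{n+1}$ is a domain, being a subring of $\bar\Q_\ell[[X_1,\dots,X_{n+1}]]$, and not a field, since $X_{n+1}$ lies in the proper ideal $(X_1,\dots,X_{n+1})$; so Lemma \ref{lm:lift} \ref{it:liftdim}, applied with $A=S_n$ (Noetherian of dimension $n$ by the inductive hypothesis) and $B=S_{n+1}$, shows that $S_{n+1}$ is Noetherian of Krull dimension $n+1$. For the Jacobson property, given $0\ne f\in S_{n+1}$, Axiom \ref{it:aut} provides a unit $u$, an automorphism $\sigma$ and $w=u\sigma(f)\in W$ with $w\ne1$ whenever $f$ is a non-unit; reducing $w$ modulo $(X_1,\dots,X_n)$ yields a distinguished polynomial $\bar w\in\bar\Q_\ell[X_{n+1}]$ of degree $\deg w$, and since the maximal ideal of the valuation ring of $\bar\Q_\ell$ is infinite while $\bar w$ has finitely many roots, there is a topologically nilpotent $c$ with $\bar w(c)\ne0$, whence the maximal ideal $(X_1,\dots,X_n,X_{n+1}-c)$ of $S_{n+1}$ avoids $w$ and so $\sigma^{-1}$ of it avoids $f$; thus $S_{n+1}$ has zero Jacobson radical, and likewise every nonzero prime $\mathfrak q\ni b\ne0$ is the preimage of a prime of $S_{n+1}/(b)\cong S_n[X_{n+1}]/(w)$, a finite and hence (by the inductive hypothesis) Jacobson $S_n$-algebra, so $S_{n+1}$ is Jacobson. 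Finally, for regularity, the closed points of $\Spec S_{n+1}$ are exactly its $\bar\Q_\ell$-points, identified with $\cC(A_{n+1})_\ell$; the chain $(0)\subsetneq(X_1)\subsetneq\dots\subsetneq(X_1,\dots,X_{n+1})$ — each term prime because the corresponding quotient of $S_{n+1}$ is again one of the rings $S_j$ — has length $n+1=\dim S_{n+1}$, so the maximal ideal $(X_1,\dots,X_{n+1})$ has height $n+1$ and is generated by $n+1$ elements, making $S_{n+1}$ regular there; the twisting automorphisms $a\mapsto\chi(a)a$ of $S_{n+1}=R_{n+1}$ (well-defined since $\chi(A_{n+1}^{(\ell)})\subset O_E^\times$ for some $E$, by Lemma \ref{lm:cptinL} \ref{it:cptinOE}) act transitively on the closed points, carrying this maximal ideal onto an arbitrary one, so $S_{n+1}$ is regular at every maximal ideal, hence regular.

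The hard part is the Rückert step: it is the core of the arguments of \cite{gabber1996faisceaux}, adapting the Rückert formalism of \cite{bosch1984non} to the mixed-characteristic ring $S_{n+1}$, and the real work lies in the substitution lemma over the finite residue field of $O_E$, the classical Weierstrass preparation and division in the complete local ring $O_E[[X_1,\dots,X_{n+1}]]$, and the control of denominators needed to transfer these statements across the colimit. Everything else is formal once the inductive hypothesis and Lemma \ref{lm:lift} are available.
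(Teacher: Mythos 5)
The paper cites this Fact from Gabber--Loeser without reproducing a proof, so there is no in-paper argument to compare against. Your reconstruction follows the same route those authors take (adapting the Rückert formalism of \cite{bosch1984non} to the Iwasawa-type ring $S_n$): a simultaneous induction in which the Rückert structure of $S_{n+1}$ over $S_n$ comes from Weierstrass preparation and division in the complete Noetherian local rings $O_E[[X_1,\dots,X_{n+1}]]$, after which Lemma~\ref{lm:lift} and formal arguments give the Noetherianity, dimension, Jacobson and regularity statements. This is the right plan and the core Weierstrass step is laid out correctly.

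There are, however, a few concrete slips. Your opening identities are false: $\Q_\ell[[X_1,\dots,X_n]]\neq\Z_\ell[[X_1,\dots,X_n]][\ell^{-1}]$ (the left side contains $\sum_m\ell^{-m!}X_1^m$, whose coefficients have unbounded denominators), so $S_n\neq\bar\Q_\ell\otimes_{\Q_\ell}\Q_\ell[[X_1,\dots,X_n]]$, and $E\otimes_{\Z_\ell}\Z_\ell[[X]]=O_E[[X]][\ell^{-1}]$, not $O_E[[X]]$. Fortunately the colimit formula $S_n=\varinjlim_E O_E[[X_1,\dots,X_n]][\ell^{-1}]$ is correct (it follows from $\bar\Q_\ell=\varinjlim_E O_E[\ell^{-1}]$ and $O_E\otimes_{\Z_\ell}\Z_\ell[[X]]=O_E[[X]]$) and is the only thing you actually use downstream, so these errors do not propagate, but they should be fixed. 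In the verification of Axiom~\ref{it:Wprod} you reduce $f$ and $g$ modulo the maximal ideal of $O_E[[X_1,\dots,X_n]]$; but a priori $f,g$ are only monic over $S_n$, hence over $O_{E'}[[X_1,\dots,X_n]][\ell^{-1}]$ for some $E'\supset E$, and you first need the Gauss-lemma step: $O_{E'}[[X_1,\dots,X_n]]$ is a regular local ring, hence a UFD, hence integrally closed in its fraction field, so monic factors of a monic polynomial with coefficients in $O_{E'}[[X_1,\dots,X_n]]$ again have coefficients there. Finally, the regularity-by-transitivity argument rests on the claim that every maximal ideal of $S_{n+1}$ has residue field $\bar\Q_\ell$, which you assert but do not fold into the induction; it is provable by the very mechanism you use for the Jacobson property (a maximal ideal contains some $b\neq0$, and $S_{n+1}/(b)\cong S_n[X_{n+1}]/(w)$ is finite over $S_n$, whose maximal ideals have residue field $\bar\Q_\ell$ by the inductive hypothesis), so this should simply be added as a fourth item carried through the induction.
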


\begin{lm}\label{lm:XnBaire}
For every integer $n\ge 1$, the algebra $S_n$ is $\bar{\Q}_{\ell}$-Baire.
\end{lm}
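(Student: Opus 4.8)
The plan is to induct on $n\ge 1$, using Lemma \ref{lm:lift} to pass from $S_n$ to $S_{n+1}$. First I would fix a construction used throughout. Identify each $S_m$ with its image in $\bar{\Q}_\ell[[X_1,\dots,X_m]]$, as in the discussion preceding Fact \ref{ft:AnAn-1}. For $\lambda\in\ell\Z_\ell\setminus\{0\}$ and $m\ge 1$, the scalar $\lambda$ is topologically nilpotent, so substitution $X_m\mapsto\lambda$ gives a surjective $\bar{\Q}_\ell$-algebra homomorphism $S_m\to S_{m-1}$ fixing $X_1,\dots,X_{m-1}$, and Weierstrass division by the distinguished degree-one polynomial $X_m-\lambda$ identifies its kernel with the ideal $(X_m-\lambda)$ (the statement over $\Z_\ell[[X_1,\dots,X_m]]$ then follows for $S_m$ by the flat base change $-\otimes_{\Z_\ell}\bar{\Q}_\ell$). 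Hence $S_m/(X_m-\lambda)\cong S_{m-1}$, so $\dim V_{S_m}(X_m-\lambda)=\dim S_{m-1}=m-1$ by Fact \ref{ft:AnAn-1} \ref{it:Andim}. Moreover, for distinct $\lambda,\mu\in\ell\Z_\ell\setminus\{0\}$ the difference $(X_m-\lambda)-(X_m-\mu)=\mu-\lambda$ is a nonzero scalar, hence a unit of the $\bar{\Q}_\ell$-algebra $S_m$, so the ideals $(X_m-\lambda)$ and $(X_m-\mu)$ are comaximal and $V_{S_m}(X_m-\lambda)$, $V_{S_m}(X_m-\mu)$ are disjoint; and $\ell\Z_\ell\setminus\{0\}$ is uncountable.

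For the base case $n=1$ I would argue directly: a separate argument is needed here, since $S_0=\bar{\Q}_\ell$ is a field and hence not $\bar{\Q}_\ell$-Baire, so Lemma \ref{lm:lift} cannot start the induction at $n=0$. By Fact \ref{ft:AnAn-1} \ref{it:Andim}, $\Spec S_1$ is an integral Noetherian scheme of dimension $1$. For $\lambda\in\ell\Z_\ell\setminus\{0\}$ the quotient $S_1/(X_1-\lambda)\cong\bar{\Q}_\ell$ is a $\bar{\Q}_\ell$-point of $\Spec S_1$, and distinct $\lambda$ give distinct points (they differ in the value on $X_1$), so $\Spec S_1(\bar{\Q}_\ell)$ is uncountable. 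On the other hand, any closed subscheme $Z\subseteq\Spec S_1$ with $\dim Z<1$ is $V_{S_1}(I)$ with $I\ne 0$, so $S_1/I$ is a Noetherian ring of Krull dimension $0$, hence Artinian, whence $Z$ has finitely many points and $Z(\bar{\Q}_\ell)$ is finite. A countable union of such sets being countable, removing them from the uncountable set $\Spec S_1(\bar{\Q}_\ell)$ leaves an uncountable set, so $S_1$ is $\bar{\Q}_\ell$-Baire.

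For the inductive step I would take $n\ge 1$ with $S_n$ already known to be $\bar{\Q}_\ell$-Baire and apply Lemma \ref{lm:lift} with $k=\bar{\Q}_\ell$, $A=S_n$, $B=S_{n+1}$. By Fact \ref{ft:AnAn-1}, $S_n$ is a Noetherian $\bar{\Q}_\ell$-algebra of dimension $n$, and $S_{n+1}$ is a domain of positive Krull dimension (hence not a field) that contains $S_n[X]$ and is $\bar{\Q}_\ell$-R\"uckert over $S_n$, so all hypotheses of Lemma \ref{lm:lift} are met. For part \ref{it:liftBaire} it then remains to observe that $S:=\{X_n-\lambda:\lambda\in\ell\Z_\ell\setminus\{0\}\}\subseteq S_n$ is an uncountable subset with $\dim V_{S_n}(s)=n-1$ for every $s\in S$ and with $\{V_{S_n}(s)\}_{s\in S}$ pairwise disjoint, by the construction above. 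Hence $S_{n+1}$ is $\bar{\Q}_\ell$-Baire, which closes the induction.

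The step I expect to require the most care is the identification $S_m/(X_m-\lambda)\cong S_{m-1}$, that is, showing $\dim V_{S_m}(X_m-\lambda)$ is exactly $m-1$ and not merely $\le m-1$ (which is all Krull's principal ideal theorem gives, since $X_m-\lambda$ is a nonzero non-unit of the regular domain $S_m$). This is where the completed structure of $S_m$ is genuinely used, via topological nilpotence of $\lambda$ and Weierstrass division; choosing $\lambda\in\ell\Z_\ell$ keeps the computation inside the Iwasawa algebra $\Z_\ell[[X_1,\dots,X_m]]$, where one is not hampered by $S_m$ failing to be of finite type over $\bar{\Q}_\ell$, and the result transports to $S_m$ by flat base change.
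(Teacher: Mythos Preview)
Your proof is correct and follows essentially the same strategy as the paper: induct on $n$, handle the base case $S_1$ directly as a one-dimensional Noetherian scheme with uncountably many $\bar{\Q}_\ell$-points, and for the inductive step apply Lemma~\ref{lm:lift}~\ref{it:liftBaire} with $A=S_{n}$, $B=S_{n+1}$, using an uncountable disjoint family of codimension-one closed subsets cut out by $X_i-\lambda$ for $\lambda$ topologically nilpotent. The only packaging difference is that you establish $\dim V_{S_m}(X_m-\lambda)=m-1$ and pairwise disjointness once and for all via Weierstrass division and comaximality, whereas the paper carries these as part of a joint inductive statement and extracts them from \eqref{eq:dimB/b} and the surjectivity argument inside the proof of Lemma~\ref{lm:lift}; the paper also uses the larger parameter set $\cM\subset\overline{\Z_\ell}$ and the variable $X_1$ rather than $\ell\Z_\ell\setminus\{0\}$ and $X_m$, but these choices are immaterial.
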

\begin{proof}
Since $\bar{\Q}_{\ell}$ is a flat $\Z_{\ell}$-module, the injection $\Z_{\ell}[X_1,\dots,X_n]\to \Z_{\ell}[[X_1,\dots,X_n]]$ induces an injection $\bar{\Q}_{\ell}[X_1,\dots,X_n]\to S_n$. The natural morphism \begin{equation}\label{eq:X1A1}
	\Spec(\bar{\Q}_{\ell}[[X_1,\dots,X_n]])\to \mathbf{A}^n_{\bar{\Q}_{\ell}}
\end{equation} of schemes over $\bar{\Q}_{\ell}$
factors through a morphism $p_n:\Spec(S_n)\to \mathbf{A}^n_{\bar{\Q}_{\ell}}$.

Let $\cM=\cup_E m_E$, where $E$ runs through all finite subextensions of $\Q_{\ell}\subset\bar{\Q}_{\ell}$, and $m_E$ is the maximal ideal of the ring of integers of $E$.  Then $\cM$ is the maximal ideal of the integral closure $\overline{\Z_{\ell}}$ of $\Z_{\ell}$ inside $\bar{\Q}_{\ell}$. By \cite[Prop., p.128]{robert2000course}, the residue field $\overline{\Z_{\ell}}/\cM$ is an algebraic closure of the finite field $F_{\ell}$, so it is countable. As $\Z_{\ell}$ is uncountable, so is the set $\cM$. 

For every $(a_1,\dots,a_n)\in \cM^n$,  there is a surjective morphism of $\bar{\Q}_{\ell}$-algebras: \[ \bar{\Q}_{\ell}[[X_1,\dots,X_n]]\to \bar{\Q}_{\ell},\quad f\mapsto f(a_1,\dots,a_n).\] Its kernel is a $\bar{\Q}_{\ell}$-point of $\Spec(\bar{\Q}_{\ell}[[X_1,\dots,X_n]])$, whose image under   (\ref{eq:X1A1}) is $(a_1,\dots,a_n)\in A^n_{\bar{\Q}_{\ell}}(\bar{\Q}_{\ell})$. Hence $\cM^n\subset p_n(\Spec(S_n)(\bar{\Q}_{\ell}))$. In particular, $\Spec(S_n)(\bar{\Q}_{\ell})$ is uncountable.
By induction on $n>0$, we prove that $S_n$ is $\bar{\Q}_{\ell}$-Baire, and $\{V_{S_n}(X_1-a)\}_{a\in \cM}$ is a pairwise disjoint family of $(n-1)$-dimensional subsets. When $n=1$, by Fact \ref{ft:AnAn-1} \ref{it:Andim}, $S_1$ is $\bar{\Q}_{\ell}$-Baire. Moreover, $\{V_{S_1}(X_1-a)\}_{a\in\cM}$ is a pairwise distinct family of closed point of $\Spec(S_1)$.  The statement is proved for $n=1$. Assume the statement  for $n-1$ with $n\ge 2$. By Fact \ref{ft:AnAn-1}, \eqref{eq:dimB/b},  and Lemma \ref{lm:lift} \ref{it:liftBaire}, the statement holds for $n$. The induction is completed.
\end{proof}

	\section{Krämer-Weissauer theory}\label{sec:KWtheory}
Let $k$ be a field of characteristic zero. Let $\Vec_k$ be the category of finite dimensional $k$-vector spaces. Choose an algebraic closure $\bar{k}$ of $k$. Let $\Rep_{\bar{\Q}_{\ell}}(\Gamma_k)$ be the category of continuous, finite dimensional $\bar{\Q}_{\ell}$-representations of $\Gamma_k$.  Let $A$ be an abelian variety over $k$. Recall that $\pet(A_{\bar{k}})$ is a free $\hat{\Z}$-module of rank $2\dim A$. With the notation of Section \ref{sec:cotori}, set
\begin{itemize}
\item $\cC(A)=\cC(\pet(A_{\bar{k}}))$: the group of characters $\pet(A_{\bar{k}})\to \bar{\Q}_{\ell}^{\times}$;
\item $\cC(A)_{\ell'}=\cC(\pet(A_{\bar{k}}))_{\ell'}$: the group of characters of finite order prime to $\ell$;
\item $\cC(A)_{\ell}$: the cotorus assigned to $\pet(A_{\bar{k}})$. 
\end{itemize} 

\subsection{Generic vanishing theorem}
For an object $K\in \Perv(A)$,   set \[\cS(K):=\{\chi\in \cC(A)|H^i(A_{\bar{k}},K\otimes^L L_{\chi})\neq0\text{ for some integer }i\neq0\}.\] %

\begin{ft}[{\cite[Thm.~1.1]{kramer2015vanish}, \cite[Vanishing Theorem, p.561; Thm.~2]{weissauer2016vanishing}}]\label{ft:etvanish}
For every perverse sheaf $K\in \Perv(A)$ and every character $\chi_{\ell'}\in\cC(A)_{\ell'}$, the set \[\{\chi_{\ell}\in\cC(A)_{\ell}(\bar{\Q}_{\ell})|\chi_{\ell'}\chi_{\ell}\in \cS(K)\}\] is the set of $\bar{\Q}_{\ell}$-points of a strict Zariski closed subset of the scheme $\cC(A)_{\ell}$.\end{ft}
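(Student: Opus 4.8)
The plan is to encode all the cohomology groups $H^i(A_{\bar k},K\otimes^L L_\chi)$, for $\chi$ with a prescribed prime-to-$\ell$ part, as the derived fibres of a single perfect complex over the coordinate ring of the cotorus, and then to split the proof into a soft part (the locus is Zariski closed) and a hard part (the locus is \emph{strict}). First I would reduce to the case of a trivial prime-to-$\ell$ part: by Lemma \ref{lm:tensorLchi} the functor $-\otimes^L L_{\chi_{\ell'}}$ is a t-exact self-equivalence of $D_c^b(A_{\bar k})$, so $K':=K\otimes^L L_{\chi_{\ell'}}$ is again perverse, and since $L_{\chi_{\ell'}\chi_\ell}\cong L_{\chi_{\ell'}}\otimes L_{\chi_\ell}$ one has, for $\chi_\ell\in\cC(A)_\ell(\bar{\Q}_\ell)$ viewed inside $\cC(A)$, that $\chi_{\ell'}\chi_\ell\in\cS(K)$ if and only if $\chi_\ell\in\cS(K')$. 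It therefore suffices to show: for every $K\in\Perv(A_{\bar k})$, the set $\{\chi_\ell\in\cC(A)_\ell(\bar{\Q}_\ell):\chi_\ell\in\cS(K)\}$ is the set of $\bar{\Q}_\ell$-points of a strict Zariski closed subscheme of $\cC(A)_\ell=\Spec R$, where $R:=\bar{\Q}_\ell\otimes_{\Z_\ell}\Z_\ell[[\pet(A_{\bar k})^{(\ell)}]]$ is Noetherian and regular (Section \ref{sec:cotori}).

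Next I would construct the Mellin transform, following Gabber--Loeser \cite[Sec.~3.2]{gabber1996faisceaux}: the regular representations of the finite quotients $\pet(A_{\bar k})^{(\ell)}/\ell^j$ form a compatible system of lisse sheaves on $A_{\bar k}$, and applying $R\Gamma(A_{\bar k},K\otimes^L-)$, passing to the inverse limit and tensoring with $\bar{\Q}_\ell$ yields a bounded complex $M(K)$ of $R$-modules in degrees $[-\dim A,\dim A]$. Since $A$ is proper, the cohomology of $M(K)$ is finitely generated over $R$, and since $R$ is regular Noetherian, $M(K)$ is perfect; locally on $\cC(A)_\ell$ it is represented by a bounded complex $P^\bullet$ of finite free $R$-modules. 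Proper base change, together with the fact that the universal system specializes at a closed point $\chi_\ell$ (via the surjection $R\to\bar{\Q}_\ell$ attached to $\chi_\ell$ by Lemma \ref{lm:extendcharacter}) to the rank-one lisse sheaf $L_{\chi_\ell}$, gives a functorial isomorphism $M(K)\otimes^L_R\bar{\Q}_\ell\cong R\Gamma(A_{\bar k},K\otimes^L L_{\chi_\ell})$ for every $\chi_\ell\in\cC(A)_\ell(\bar{\Q}_\ell)$ (all closed points of $\cC(A)_\ell$ having residue field $\bar{\Q}_\ell$).

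Granting this, the closedness is formal: for each $i$ the function $x\mapsto\dim_{\kappa(x)}H^i(P^\bullet\otimes_R\kappa(x))$ is upper semicontinuous (the differentials being matrices over $R$ locally), so $Z_i:=\{x\in\cC(A)_\ell:H^i(P^\bullet\otimes_R\kappa(x))\neq0\}$ is Zariski closed; only finitely many $i\neq0$ occur, and $Z:=\bigcup_{i\neq0}Z_i$ is a Zariski closed subscheme with $Z(\bar{\Q}_\ell)$ equal to the set in question. Since closed points are Zariski dense in $\cC(A)_\ell$, to see that $Z$ is strict it suffices to exhibit one $\chi_\ell$ with $H^i(A_{\bar k},K\otimes^L L_{\chi_\ell})=0$ for all $i\neq0$. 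Inspecting the hyper-Tor spectral sequence $\mathrm{Tor}^R_{-p}\!\big(\cH^q(M(K)),\bar{\Q}_\ell\big)\Rightarrow H^{p+q}\big(M(K)\otimes^L_R\bar{\Q}_\ell\big)$, such a $\chi_\ell$ is obtained as soon as it avoids the supports of the finitely many $R$-modules $\cH^q(M(K))$ with $q\neq0$ and lies in the dense open locus where the coherent sheaf $\cH^0(M(K))$ is locally free (generic flatness over the integral regular scheme $\cC(A)_\ell$): then every contribution in total degree $\neq0$ vanishes. Hence the entire statement reduces to the single assertion that $\cH^q(M(K))$ is a torsion $R$-module for every $q\neq0$.

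The main obstacle is exactly this torsion assertion: it is the Krämer--Weissauer generic vanishing theorem for perverse sheaves on abelian varieties \cite[Thm.~1.1]{kramer2015vanish}, \cite{weissauer2016vanishing}, and I would not attempt to reprove it here. The structure of their argument is: reduce, via the Jordan--Hölder filtration of $K$ (the category $\Perv(A_{\bar k})$ being Noetherian and Artinian and the torsion-ness of $\cH^q(M(-))$ being stable under extensions, as $\mathrm{Frac}(R)$ is flat over $R$), to a simple perverse sheaf $K=j_{!*}(L[\dim V])$ supported on the closure of a smooth locally closed $V\subset A$; use Verdier duality on the proper variety $A$, under which $H^i(A_{\bar k},K\otimes^L L_\chi)^\vee\cong H^{-i}(A_{\bar k},\D_{A_{\bar k}}K\otimes^L L_{\chi^{-1}})$ with $\D_{A_{\bar k}}K$ again perverse, to reduce to the one-sided bound that $\cH^q(M(K))$ is torsion for $q>0$; and prove this last bound by combining Artin's affine vanishing theorem with the special geometry of abelian varieties — projectivity, translation invariance, and induction on $\dim A$ through quotient abelian varieties.
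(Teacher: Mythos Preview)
The paper does not prove this statement: it is recorded as a \emph{Fact} with citations to \cite{kramer2015vanish} and \cite{weissauer2016vanishing} and is used as a black box thereafter, so there is no ``paper's own proof'' to compare against. Your outline is a faithful account of how the cited references actually establish the result: the reduction to trivial prime-to-$\ell$ part via Lemma~\ref{lm:tensorLchi} is correct; the Gabber--Loeser Mellin transform $M(K)$ over $R$ is exactly the device used to package the family $\{R\Gamma(A_{\bar k},K\otimes^L L_{\chi_\ell})\}_{\chi_\ell}$ as the derived fibres of a single perfect complex; upper semicontinuity then gives Zariski closedness of the bad locus; and your spectral-sequence argument correctly reduces strictness to the assertion that $\cH^q(M(K))$ is $R$-torsion for $q\neq 0$. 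You are also right to identify that last torsion statement as the substantive content of the Kr\"amer--Weissauer generic vanishing theorem and to defer it to the literature rather than reprove it --- that is precisely what the paper itself does.
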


We review \cite[p.725]{kramer2015tannaka}. Because of $\mathrm{char}(k)=0$, for every $K\in \Perv(A)$, its Euler characteristic $\chi(A,K):=\sum_{i\in \Z}(-1)^i\dim_{\bar{\Q}_{\ell}}H^i(A_{\bar{k}},K)$ satisfies \begin{equation}\label{eq:Eulerineq}\chi(A,K)\ge0.\end{equation} Let $N(A)\subset \Perv(A)$ be the full subcategory of objects $K$ with $\chi(A,K)=0$. From  the additivity of the function $\chi(A,-):\mathrm{Ob}(\Perv(A))\to \N$ and \eqref{eq:Eulerineq}, $N(A)$ is a Serre subcategory of $\Perv(A)$. Let $\bar{P}(A):=\Perv(A)/N(A)$ be the quotient abelian category.
For every $\chi\in\cC(A)$,  set  $\cE^{\chi}(A_{\bar{k}})=\{K\in \Perv(A_{\bar{k}})|\chi\notin \cS(K)\}$. Then $\cE^{\chi}(A_{\bar{k}})$ is closed under extensions in $\Perv(A_{\bar{k}})$. Let $P^{\chi}(A)\subset\Perv(A)$ be the full subcategory of objects $K$ with $Q\in\cE^{\chi}(A_{\bar{k}})$ for every simple subquotient $Q$ of $K|_{A_{\bar{k}}}$ in $\Perv(A_{\bar{k}})$. 
By \cite[Thm.~4.3.1 (i)]{beilinson2018faisceaux}, every object $K\in\Perv(A)$ is Noetherian and Artinian. Then by  Fact \ref{ft:etvanish} and Lemma \ref{lm:finisubq} \ref{it:subq=factor}, for  every  $\chi_{\ell'}\in\cC(A)_{\ell'}$,  the set $\{\chi_{\ell}\in\cC(A)_{\ell}(\bar{\Q}_{\ell})|K\in P^{\chi_{\ell'}\chi_{\ell}}(A)\}$ is the set of $\bar{\Q}_{\ell}$-points of a strict Zariski closed subset of $\cC(A)_{\ell}$. 

\begin{lm}\label{lm:AES}
Let $\cA$ be a Noetherian and Artinian abelian category. Let $\cE$ be a class of objects of $\cA$ closed under isomorphisms. Let $\cS\subset\cA$ be the full subcategory of objects  whose all  simple subquotients are in $\cE$. 
\begin{enumerate}
\item\label{it:SE} Then $\cS$ is a Serre subcategory of $\cA$.
\item\label{it:SinE} If further $\cE$ is closed under extensions,  then $\cS\subset \cE$.\end{enumerate} 
\end{lm}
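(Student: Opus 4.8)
The plan is to deduce (1) from the standard characterization of Serre subcategories \cite[\href{https://stacks.math.columbia.edu/tag/02MP}{Tag 02MP}]{stacks-project}. The full subcategory $\cS$ is nonempty, since the zero object has no simple subquotient and so vacuously lies in $\cS$, and it is closed under isomorphism because $\cE$ is and ``being a simple subquotient of'' is an isomorphism-invariant relation. It therefore remains to show that for every short exact sequence $0\to X'\to X\to X''\to 0$ in $\cA$, one has $X\in\cS$ if and only if $X'\in\cS$ and $X''\in\cS$. The forward direction is immediate: any subquotient of a subobject, or of a quotient, of $X$ is again a subquotient of $X$, so every simple subquotient of $X'$ or of $X''$ is one of $X$.

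For the converse, the crux --- and the only step with any content --- is the claim that \emph{every simple subquotient $Q$ of $X$ is a simple subquotient of $X'$ or of $X''$}. Writing $Q=B/A$ with $A\subseteq B\subseteq X$, I would look at the canonical monomorphism $(B\cap X')/(A\cap X')\cong\big(A+(B\cap X')\big)/A\hookrightarrow B/A=Q$, whose source is a subquotient of $X'$. Since $Q$ is simple, this map is either an isomorphism, in which case $Q$ is a simple subquotient of $X'$, or else zero, i.e. $B\cap X'\subseteq A$; in the latter case the modular law gives $B\cap(A+X')=A+(B\cap X')=A$, so the canonical epimorphism $B/A\twoheadrightarrow(B+X')/(A+X')$ has trivial kernel, hence is an isomorphism onto a subquotient of $X/X'=X''$, exhibiting $Q$ as a simple subquotient of $X''$. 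Granting the claim: if $X',X''\in\cS$, then every simple subquotient of $X$ is isomorphic to one lying in $\cE$, so $X\in\cS$; this proves (1). (Alternatively, one can splice composition series of $X'$ and of $X''$ --- both exist since $\cA$ is Noetherian and Artinian --- and invoke Jordan-H\"older as in Lemma~\ref{lm:finisubq} to see that every simple subquotient of $X$ occurs among the graded pieces.)

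For (2), assume in addition that $\cE$ is closed under extensions, so in particular $0\in\cE$. Let $X\in\cS$; as $\cA$ is Noetherian and Artinian, $X$ has finite length $\ell(X)$, and I would induct on $\ell(X)$. If $X=0$ there is nothing to do. Otherwise $X$, being Artinian, contains a simple subobject $T$; then $T$ is a simple subquotient of $X$, so $T\in\cE$, while $X/T$ is a quotient of $X$, hence lies in $\cS$ by (1) and has length $\ell(X)-1$, so $X/T\in\cE$ by induction. Applying closure under extensions to $0\to T\to X\to X/T\to 0$ yields $X\in\cE$, completing the induction. Everything here is formal; the one place demanding care is the extension claim in (1), where one must choose the right sub- and quotient objects and apply the modular law correctly.
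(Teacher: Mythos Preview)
Your proof is correct and follows essentially the same line as the paper's: both verify the Serre-subcategory criterion of \cite[\href{https://stacks.math.columbia.edu/tag/02MP}{Tag 02MP}]{stacks-project} by showing that every simple subquotient of an extension arises from one of the two pieces, and both prove (2) by inducting along a composition series. The only difference is cosmetic---the paper first treats the case where $Q$ is a simple \emph{quotient} of $X$ (so the map $X'\to Q$ is either epi or zero) and then reduces the general subquotient $Q=B/A$ to that case by passing to the subobject $B$, whereas you argue directly with the modular law.
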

\begin{proof}\hfill\begin{enumerate}
		\item \begin{enumerate}\item\label{it:Sclssubquotient} We prove that $\cS$ is closed under subquotients. Let $X$ be an object of $\cS$ with a subquotient $Y$. Every simple subquotient of $Y$ is that of $X$, hence in $\cE$. Thus, $Y\in\cS$.  \end{enumerate}
		
		Let $0\to L\overset{f}{\to} M\overset{g}{\to} N\to0$ be a short exact sequence in $\cA$ with $L,N\in\cS$. 	Let $Q$ be a  simple subquotient of $M$. We prove  $Q\in\cE$.
			\begin{enumerate}\setItemnumber{2}\item\label{it:quotientofextension} First, assume that $Q$ is a quotient of $M$. The natural morphism $L\to Q$ is either an epimorphism or zero, in which case $Q$ is a simple quotient of $L$ or $N$ respectively. Hence $Q\in\cE$. 
		 \setItemnumber{3}
	\item\label{it:subquotientofextension}	Now assume that $Q$ is general. There is a subobject $M_0\subset M$ and an epimorphism $M_0\to Q$. Then \[0\to f^{-1}(M_0)\to M_0\to g(M_0)\to0\] is a short exact sequence in $\cA$ with $f^{-1}(M_0)$ (resp. $g(M_0)$) a subobject of $L$ (resp. $N$). From Part \ref{it:Sclssubquotient}, both $f^{-1}(M_0)$ and $g(M_0)$ are in $\cS$. From Part \ref{it:quotientofextension}, one has $Q\in\cE$. \end{enumerate}
		
		From Part \ref{it:subquotientofextension}, one has $M\in\cS$, and $\cS$ is closed under extensions. The result follows from \cite[\href{https://stacks.math.columbia.edu/tag/02MP}{Tag 02MP}]{stacks-project}.
\item By \cite[\href{https://stacks.math.columbia.edu/tag/0FCJ}{Tag 0FCJ}]{stacks-project}, every object $X\in\cS$ admits a filtration in $\cA$ \[0\subset X_1\subset X_2\subset\dots\subset X_n=X\] by subobjects such that each $X_i/X_{i-1}$ is a simple subquotient of $X$. Then $X_i/X_{i-1}\in \cE$. As $\cE$ is closed under extensions, one has $X\in\cE$.\end{enumerate}
\end{proof}
By Lemma \ref{lm:AES} \ref{it:SE}, for every $\chi\in\cC(A)$,  the subcategory $P^{\chi}(A)\subset \Perv(A)$ is a Serre subcategory. From Lemma \ref{lm:AES} \ref{it:SinE}, for every $K\in P^{\chi}(A)$ and every  integer $i\neq0$, one has \begin{equation}\label{eq:vanishPchi}H^i(A_{\bar{k}},K\otimes^LL_{\chi})=0.\end{equation} From the proof of \cite[Lemma~2.5 (3)]{lawrence2020shafarevich}, the functor \begin{equation}\label{eq:omegachi}\omega_{\chi}=H^0(A_{\bar{k}},\cdot\otimes^LL_{\chi}):P^{\chi}(A)\to \Vec_{\bar{\Q}_{\ell}} \end{equation} is exact. Let $N^{\chi}(A)$ be the full subcategory of $P^{\chi}(A)$ of objects in $N(A)$. By \cite[Cor.~4.2]{kramer2015vanish}, for every $K\in N^{\chi}(A)$, , one has $\chi(A,K\otimes^LL_{\chi})=0$. From (\ref{eq:vanishPchi}), one has $H^0(A_{\bar{k}},K\otimes^LL_{\chi})=0$. Then by \cite[\href{https://stacks.math.columbia.edu/tag/02MS}{Tag 02MS}]{stacks-project}, the functor $\omega_{\chi}$ factors uniquely through an exact functor (still denoted by $\omega_{\chi}$) \begin{equation}\label{eq:quotientomegachi}P^{\chi}(A)/N^{\chi}(A)\to \Vec_{\bar{\Q}_{\ell}}.\end{equation} 

\subsection{Tannakian groups}
Let  $(\cC,\otimes)$ a neutral Tannakian category (in the sense of \cite[Def.~2.19]{deligne2022tannakian}) over an algebraically closed field $Q$ of characteristic $0$, with a fiber functor $\omega:\cC\to \Vec_Q$. Let $\Aut^{\otimes}(\cC,\omega)$ be the corresponding affine group scheme over $Q$. By \cite[Sec.~9.2, p.187]{deligne1990categories}, up to isomorphism of group schemes, $\Aut^{\otimes}(\cC,\omega)$ is independent of the choice of $\omega$. (See \cite[Thm.~1.2]{wibmer2022remark} for an elementary proof.)

For an object $K\in \cC$, let  $\iota:\langle K\rangle\hookrightarrow \cC$ be the full subcategory whose objects are the subquotients of $\{(K\oplus K^{\vee})^{\otimes n}\}_{n\ge1}$. Then $(\langle K\rangle,\otimes)$ is a neutral Tannakian subcategory of $\cC$ (in the sense of \cite[1.7]{milne2005quotients}), for which  $\omega\circ\iota:\langle K\rangle\to \Vec_Q$ is  a fiber functor. The group scheme $\Aut^{\otimes}(\langle K\rangle,\omega\circ\iota)$ is the image of the natural morphism $\Aut^{\otimes}(\cC,\omega) \to \GL(\omega(K))$. 
\begin{df}\label{df:Tannakianmonodromy} The algebraic group $\Aut^{\otimes}(\langle K\rangle,\omega\circ\iota)$  is called the Tannakian monodromy group\index{Tannakian monodromy group} of $K$ at $\omega$ and is denoted by $G_{\omega}(K)$.\end{df}By \cite[p.69]{simpson1992higgs}, $G_{\omega}(K)$ is reductive if and only if $K$ is semisimple in $\cC$. 

\begin{eg}With  tensor product, $\Rep_{\bar{\Q}_{\ell}}(\Gamma_k)$ is a neutral Tannakian category over $\bar{\Q}_{\ell}$.  The forgetful  functor $\omega:\Rep_{\bar{\Q}_{\ell}}(\Gamma_k)\to \Vec_{\bar{\Q}_{\ell}}$ is a fiber functor. The Tannakian monodromy group of an object $\rho:\Gamma_k\to \GL(V)$ at $\omega$ is the Zariski closure of $\rho(\Gamma_k)$ in $\GL(V)$. \end{eg}
\subsection{Sheaf convolution}
 Let $m:A\times_kA\to A$ be the group law on $A$. Let $p_i:A\times_kA\to A $ be the projection to $i$-th factor ($i=1,2$).  The  bifunctor \[*:D_c^b(A)\times D_c^b(A)\to D_c^b(A),\quad -*+:=Rm_*(p_1^*-\otimes^Lp_2^*+)\] is called the \emph{convolution} on $A$. \begin{eg}
	For every closed reduced subvariety $i:X\to A$, let $\delta_X:=i_*\bar{\Q}_{\ell,X}\in D_c^b(A)$. Then for every closed point $x\in A$, one has $\delta_x*\delta_X=\delta_{x+X}$.
\end{eg}

By \cite{weissauer2011remark} and \cite[Sec.~3.1]{javanpeykar2023monodromy}, the pair $(D_c^b(A),*)$ is a rigid, symmetric monoidal category, with unit $\delta_0$. For every $K\in D_c^b(A)$, its adjoint dual is $K^{\vee}:=[-1]_A^*\D_AK$. 
\begin{ft}[{\cite[proof of Thm.~13.2]{kramer2015vanish}, \cite[Lemma~2.5 (4)]{lawrence2020shafarevich}, \cite[Prop.~3.1]{javanpeykar2023monodromy}}]
The convolution on $A$ induces a bifunctor \[\bar{P}(A)\times \bar{P}(A)\to \bar{P}(A),\quad (-,+)\mapsto{}^p\cH^0(-*+)\] fitting into a commutative square \begin{center}\begin{tikzcd}
	\Perv(A)\times \Perv(A) \arrow[r, "*"] \arrow[d] & D_c^b(A) \arrow[d, "{}^p\cH^0"] \\
	\bar{P}(A)\times \bar{P}(A) \arrow[r, dashed]    & \bar{P}(A).                     
\end{tikzcd}\end{center}
It makes $\bar{P}(A)$ a neutral Tannakian category  over $\bar{\Q}_{\ell}$. For every $\chi\in\cC(A)$, the subcategory $P^{\chi}(A)/N^{\chi}(A)\subset\bar{P}(A)$ is a Tannakian subcategory, on which (\ref{eq:quotientomegachi}) is a fiber functor.
\end{ft}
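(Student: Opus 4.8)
The plan is to derive the statement from three ingredients: (i) the rigid symmetric monoidal structure on $(D_c^b(A),*)$ with unit $\delta_0$ and adjoint dual $K^{\vee}=[-1]_A^*\D_AK$; (ii) the generic vanishing theorem (Fact~\ref{ft:etvanish}), the classification of negligible perverse sheaves on abelian varieties, and the inequality \eqref{eq:Eulerineq}; and (iii) Deligne's intrinsic criterion for a rigid abelian tensor category in characteristic zero to be (neutral) Tannakian. The crux is the claim that $N(A)$ is a $\otimes$-ideal modulo perverse truncation: for $K\in N(A)$ and $L\in\Perv(A)$ one has ${}^p\cH^j(K*L)\in N(A)$ for all $j$, and for $K,L\in\Perv(A)$ one has ${}^p\cH^j(K*L)\in N(A)$ for all $j\neq0$. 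This is \cite[proof of Thm.~13.2]{kramer2015vanish}, and it uses that $N(A)$ is generated under extensions (roughly) by translates of perverse pullbacks along surjections onto lower-dimensional abelian quotients, together with Fact~\ref{ft:etvanish}. Granting it, the composite $\Perv(A)\times\Perv(A)\xrightarrow{\ *\ }D_c^b(A)\xrightarrow{{}^p\cH^0}\Perv(A)\to\bar{P}(A)$ annihilates $N(A)$ in each variable, hence factors through $\bar{P}(A)\times\bar{P}(A)$; the commutative square is then its construction. Associativity, the symmetry constraint, and the unit $\delta_0$ descend formally; since $\chi(A,\delta_0)=1$ we have $\delta_0\notin N(A)$ and $\mathrm{End}_{\bar{P}(A)}(\delta_0)=\bar{\Q}_{\ell}$. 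For rigidity, the evaluation and coevaluation morphisms built from $K^{\vee}$ satisfy the triangle identities after projection to $\bar{P}(A)$, since in $D_c^b(A)$ they fail only by maps that are killed modulo negligibles. Thus $\bar{P}(A)$ is a $\bar{\Q}_{\ell}$-linear rigid abelian symmetric monoidal category with $\mathrm{End}(\delta_0)=\bar{\Q}_{\ell}$.

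To obtain neutrality I would invoke Deligne's criterion: such a category over the algebraically closed field $\bar{\Q}_{\ell}$ of characteristic $0$ admits a fiber functor to $\Vec_{\bar{\Q}_{\ell}}$ as soon as every object has categorical dimension a non-negative integer. The categorical trace of $\Id_K$ equals $\sum_i(-1)^i\dim_{\bar{\Q}_{\ell}}H^i(A_{\bar{k}},K)=\chi(A,K)$, which is a non-negative integer by \eqref{eq:Eulerineq}, so $\bar{P}(A)$ is neutral Tannakian. Now fix $\chi\in\cC(A)$. One checks that $P^{\chi}(A)$ is stable under $K\mapsto{}^p\cH^0(K*L)$ for $L\in P^{\chi}(A)$ and under $K\mapsto K^{\vee}$: for the dual, Poincaré duality on the abelian variety $A$ gives $H^i(A_{\bar{k}},K^{\vee}\otimes^LL_{\chi})\cong H^{-i}(A_{\bar{k}},K\otimes^LL_{\chi})^{\vee}$, so the simple subquotients of $K^{\vee}|_{A_{\bar{k}}}$ lie in $\cE^{\chi}(A_{\bar{k}})$ whenever those of $K|_{A_{\bar{k}}}$ do; the stability under the induced convolution is in \cite[Lemma~2.5]{lawrence2020shafarevich} and \cite[proof of Thm.~13.2]{kramer2015vanish}. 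Hence the image $P^{\chi}(A)/N^{\chi}(A)$ of the Serre subcategory $P^{\chi}(A)$ under the exact quotient functor is a rigid $\otimes$-subcategory of $\bar{P}(A)$ closed under subquotients, i.e.\ a Tannakian subcategory.

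It remains to see that the functor \eqref{eq:quotientomegachi}, $\omega_{\chi}$, is a fiber functor; given that it is already known to be exact and $\bar{\Q}_{\ell}$-linear, this reduces to faithfulness and monoidality. For faithfulness, if $K\in P^{\chi}(A)$ has $\omega_{\chi}(K)=0$ then by \eqref{eq:vanishPchi} every cohomology group of $K\otimes^LL_{\chi}$ vanishes, hence $\chi(A,K)=\chi(A,K\otimes^LL_{\chi})=0$, so $K\in N^{\chi}(A)$ and $K=0$ in the quotient. For monoidality, the projection formula (Fact~\ref{ft:cstprojection}), the multiplicativity $m^*L_{\chi}\cong p_1^*L_{\chi}\otimes p_2^*L_{\chi}$ of character sheaves, and the Künneth formula give
\[
H^*(A_{\bar{k}},(K*L)\otimes^LL_{\chi})\;\cong\;H^*(A_{\bar{k}},K\otimes^LL_{\chi})\otimes H^*(A_{\bar{k}},L\otimes^LL_{\chi});
\]
for $K,L\in P^{\chi}(A)$ the right-hand side is concentrated in degree $0$, while the perverse cohomology sheaves ${}^p\cH^j(K*L)$ with $j\neq0$ are negligible and hence $\omega_{\chi}$-acyclic, so passing to $H^0$ yields $\omega_{\chi}({}^p\cH^0(K*L))\cong\omega_{\chi}(K)\otimes\omega_{\chi}(L)$ naturally.

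The main obstacle is ingredient~(ii) applied to convolutions: the assertion that the perverse cohomology of a convolution of perverse sheaves is negligible outside degree $0$, and that $P^{\chi}(A)$ is stable under the induced convolution. This genuinely uses both the generic vanishing theorem and the classification of negligible perverse sheaves on $A$, and is the content of the cited results of Krämer--Weissauer. Everything afterwards is formal tensor-category bookkeeping together with a projection-formula computation, the only subtle point there being the tracking of perverse degrees in the monoidality isomorphism.
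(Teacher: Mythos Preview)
The paper does not give its own proof of this statement; it is recorded as a \emph{Fact} and deferred entirely to the cited references (Kr\"amer--Weissauer, Lawrence--Sawin, Javanpeykar--Kr\"amer--Lehn--Maculan). Your sketch is a faithful outline of how those references establish the result: the hard input is the $\otimes$-ideal property of $N(A)$ under perverse truncation of convolutions, after which the descent of the monoidal structure, Deligne's dimension criterion over the algebraically closed field $\bar{\Q}_{\ell}$, and the K\"unneth/projection-formula check that $\omega_{\chi}$ is a faithful exact tensor functor are exactly as you describe.

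One minor correction: rigidity in $(D_c^b(A),*)$ holds on the nose (as the paper states immediately before the Fact), so the triangle identities do not ``fail only by maps killed modulo negligibles'' in $D_c^b(A)$. The genuine issue is that $K*K^{\vee}$ is not perverse, and one must check that after applying ${}^p\cH^0$ and quotienting by $N(A)$ the evaluation/coevaluation still satisfy the triangle identities; this follows because the nonzero-degree perverse cohomologies of the relevant convolutions lie in $N(A)$. Also, your monoidality argument tacitly uses that ${}^p\cH^j(K*L)\in P^{\chi}(A)$ for all $j$ when $K,L\in P^{\chi}(A)$ (not just that they are negligible for $j\neq 0$); this is part of the stability statement in \cite[Lemma~2.5]{lawrence2020shafarevich} and should be made explicit.
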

\begin{eg}\cite[Example 7.1]{kramer2015tannaka}\label{eg:skyscraper}
	Fix a closed point $x\in  A$. Then $\delta_x$ is a simple object of $\Perv(A)$. As $\cS(\delta_x)$ is empty, for every $\chi\in \cC(A)$, one has $\delta_x\in P^{\chi}(A)$. If $x$ is a torsion point of order $n$, then $G_{\omega_{\chi}}(\delta_x)$ is isomorphic to $\Z/n$. If $x$ is not a torsion point, then $G_{\omega_{\chi}}(\delta_x)$ is isomorphic to $\G_{m/\bar{\Q}_{\ell}}$. \end{eg}
Let $\psi:\pet(A)\to \bar{\Q}_{\ell}^{\times}$ be a character, and set $\psi'=\psi|_{\pet(A_{\bar{k}})}$.  The functor \[\omega_{\psi}:\Perv(A)\to \Rep_{\bar{\Q}_{\ell}}(\Gamma_k),\quad K\mapsto H^0(A_{\bar{k}},K\otimes^LL_{\psi})\] fits into a commutative square \begin{center}
	\begin{tikzcd}
		\Perv(A) \arrow[r, "\omega_{\psi}"]                    & \Rep_{\bar{\Q}_{\ell}}(\Gamma_k) \arrow[d, "\omega"] \\
		P^{\psi'}(A) \arrow[u, hook] \arrow[r, "\eqref{eq:omegachi}"] & \Vec_{\bar{\Q}_{\ell}}                              
	\end{tikzcd}
\end{center}
The  quotient functor $P^{\psi'}(A)/N^{\psi'}(A)\to  \Rep_{\bar{\Q}_{\ell}}(\Gamma_k)$ of $\omega_{\psi}|_{P^{\psi'}(A)}$ induces a morphism of affine groups schemes \begin{equation}\label{eq:omegapsiback}\omega_{\psi}^*:\Aut^{\otimes}(\Rep_{\bar{\Q}_{\ell}}(\Gamma_k),\omega)\to \Aut^*(P^{\psi'}(A)/N^{\psi'}(A),\omega_{\psi'}).\end{equation}

\begin{df}\label{df:Gmon}For every $K\in \Perv(A)$, let $\Mon(K,\psi)$ be the Tannakian monodromy group\index{$\Mon(K,\psi)$ monodromy group} of $\omega_{\psi}(K)$ in $\Rep_{\bar{\Q}_{\ell}}(\Gamma_k)$ at $\omega$.\end{df} For every $K\in P^{\psi'}(A)$, the  functor $\omega_{\psi}|_{\langle K\rangle}:\langle K\rangle\to \langle\omega_{\psi}(K)\rangle$ induces a closed immersion of linear algebraic groups $\omega_{\psi}^*:\Mon(K,\psi)\to G_{\omega_{\psi'}}(K)$, which is the projection of (\ref{eq:omegapsiback}) in $\GL(\omega_{\psi'}(K))$.

	\section{Main results}
Consider Setting \ref{set:AX}. For every character $\chi\in \cC(A)$, denote the pullback of $\chi$ along $(p_A|_{A_{\eta}})_*:\pet(A_{\eta})\to \pet(A)$ by $\chi_{\eta}:\pet(A_{\eta})\to \bar{\Q}_{\ell}^{\times}$. Then the restriction $\chi_{\eta}|_{\pet(A_{\bar{\eta}})}$ is identified with $\chi$ \textit{via} the isomorphism $(p_A|_{A_{\bar{\eta}}})_*:\pet(A_{\bar{\eta}})\to \pet(A)$.   \begin{rk}\label{rk:monogp}
Let $K\in D^{\ULA}(A\times X/X)$. For every character $\chi\in \cC(A)$, since $L_{\chi}$ is a lisse sheaf of rank $1$, one has $K\otimes p_A^*L_{\chi}\in D^{\ULA}(A\times X/X)$. By Fact \ref{ft:ULA} \ref{it:properULA}, as $p_X:A\times X\to X$ is proper, one has $Rp_{X*}(K\otimes p_A^*L_{\chi})\in D^{\ULA}(X/X)$. Then from Fact \ref{ft:ULA} \ref{it:ULAoverid}, for every integer $n$, $L^n(K,\chi):=\cH^nRp_{X*}(K\otimes p_A^*L_{\chi})$ is a lisse sheaf on $X$. 	By the proper base change theorem (see, \textit{e.g.}, \cite[\href{https://stacks.math.columbia.edu/tag/095T}{Tag 095T}]{stacks-project}), one has \[L^n(K,\chi)_{\bar{\eta}}=H^n(A_{\bar{\eta}},K|_{A_{\bar{\eta}}}\otimes^LL_{\chi}).\] Suppose that $X$ is normal. Then the natural morphism $\eta_*:\Gamma_{k(\eta)}\to \pet(X,\bar{\eta})$ is surjective. Its composition with the monodromy representation \[\pet(X,\bar{\eta})\to \GL(L^n(K,\chi)_{\bar\eta})\] is the natural Galois representation
$\Gamma_{k(\eta)}\to \GL\left( H^n(A_{\bar{\eta}},K|_{A_{\eta}}\otimes^LL_{\chi})  \right)$. Therefore, the monodromy group of $H^n(A_{\bar{\eta}},K|_{A_{\bar{\eta}}}\otimes^LL_{\chi})$ in $\Rep(\Gamma_{k(\eta)})$ coincides with that of $L^n(K,\chi)$ in $\Loc(X)$.\end{rk}
Let $K\in\Perv(A\times X/X)$. Write $\Mon(K,\chi)$ for $\Mon(K|_{A_{\eta}},\chi_{\eta})$. By Remark \ref{rk:monogp}, when $K$ is ULA and $X$ is normal, $\Mon(K,\chi)$ is the monodromy group of the lisse sheaf $L^0(K,\chi)$. We shall prove that there exist many characters $\chi$ with the monodromy group $\Mon(K,\chi)$  normal in the convolution group $G(K|_{A_{\eta}})$. By the well-known normality criterion (Lemma \ref{lm:normalcriterion}), it suffices to show that $\Mon(K,\chi)$ is reductive, and to consider the monodromy fixed part of \emph{all} the representations of $G(K|_{A_{\eta}})$. Such representations are from perverse sheaves. 
\begin{lm}\label{lm:normalcriterion}
Let $G$ be a linear algebraic group over an algebraically closed field $C$. Let $H$ be a closed, reductive subgroup of $G$. If for every $V\in \Rep_C(G)$, the  subspace $V^H$ is $G$-stable, then $H$ is normal in $G$.
\end{lm}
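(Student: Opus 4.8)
The plan is to realise $H$ as the kernel of a finite-dimensional representation of $G$; such a kernel is automatically a closed normal subgroup, so this finishes the proof. The bridge to the hypothesis is the classical fact that a reductive closed subgroup $H$ of a linear algebraic group $G$ is \emph{observable}: there are a finite-dimensional representation $\rho\colon G\to\GL(V)$ over $C$ and a vector $v\in V$ with $\mathrm{Stab}_G(v)=H$ (equivalently $G/H$ is quasi-affine; in characteristic $0$ this is immediate from the linear reductivity of $H$, and it holds in general — see Grosshans's book on algebraic homogeneous spaces and invariant theory). I would take this as the one external input.

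Granting it, the argument is short. Replace $V$ by the sub-representation $W:=\mathrm{span}_C\{\rho(g)v:g\in G\}\subseteq V$ generated by $v$; since $v\in W$ this does not change the stabiliser, so $\mathrm{Stab}_G(v)=H$ with $H$ now viewed inside $\GL(W)$, and tautologically $\ker(\rho|_W)\subseteq\mathrm{Stab}_G(v)=H$. On the other hand $v\in W^H$, because $H$ fixes $v$. Applying the hypothesis to $W\in\Rep_C(G)$, the subspace $W^H$ is $G$-stable, hence is a $G$-subrepresentation of $W$ containing $v$; since $v$ generates $W$ as a $G$-representation, we get $W^H=W$, i.e.\ $H$ acts trivially on $W$, i.e.\ $H\subseteq\ker(\rho|_W)$. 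Combining the two inclusions gives $H=\ker(\rho|_W)$, which is normal in $G$.

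The only genuine obstacle is the observability input. A variant that avoids quoting it but has the same content uses $\mathcal O(G)$, regarded as a union of finite-dimensional $G$-representations under right translation: the hypothesis makes $\mathcal O(G)^H=\mathcal O(G/H)$ a $G$-stable subspace, and a one-line computation then shows that every regular function on $G/H$ is invariant under right translation by $g^{-1}Hg$ for each $g\in G$; since $H$ reductive forces $G/H$ to be quasi-affine, so that regular functions separate its points, this forces $g^{-1}Hg\subseteq H$, and applying it to $g^{-1}$ as well gives $gHg^{-1}=H$. A fully elementary treatment via Chevalley's theorem ($H=\mathrm{Stab}_G(L)$ for a line $L=Cv_0$ in some $G$-representation) only covers the case where $H$ acts on $L$ through a character of finite order $n$, where one may pass to $Cv_0^{\otimes n}$ and use that a rank-one tensor determines its line; it is exactly the characters of infinite order that make the observability theorem unavoidable.
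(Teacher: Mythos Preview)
Your proof is correct and follows essentially the same route as the paper's. Both arguments use the same single external input: a reductive closed subgroup is observable (the paper cites \cite[Cor.~2.4]{grosshans2006algebraic}, matching your reference to Grosshans). The paper then concludes by citing \cite[Prop.~C.3]{andre20normality}, whereas you unpack that step into the short direct argument realising $H$ as $\ker(\rho|_W)$; your argument is in fact precisely the standard proof of that proposition, so this is a difference of exposition rather than of method.
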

\begin{proof}
By \cite[Cor.~2.4]{grosshans2006algebraic} and reductivity, $H$ is observable in $G$ (in the sense of \cite[p.134]{bialynicki1963extensions}). From  \cite[Prop.~C.3]{andre20normality}, $H$ is normal in $G$. 
\end{proof}
\subsection{Reductivity}
\begin{lm}\label{lm:monreductive}Let $K\in\Perv(A\times X/X)$ be semisimple $D_c^b(A\times X)$.
For every   $\chi\in\cC(A)\setminus \cS(K|_{A_{\eta}})$, the monodromy group $\Mon(K,\chi)$ is reductive. 
\end{lm}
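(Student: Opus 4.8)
The plan is to reduce the reductivity of $\Mon(K,\chi)=\Mon(K|_{A_{\eta}},\chi_{\eta})$ to the semisimplicity of one Galois representation, and then to extract that semisimplicity from the decomposition theorem. By definition $\Mon(K,\chi)$ is the Zariski closure of the image of $\Gamma_{k(\eta)}$ acting on $\omega_{\chi_{\eta}}(K|_{A_{\eta}})=H^0(A_{\bar{\eta}},K|_{A_{\bar{\eta}}}\otimes^LL_{\chi})$, so by Simpson's criterion \cite[p.69]{simpson1992higgs}, applied in the neutral Tannakian category $\Rep_{\bar{\Q}_{\ell}}(\Gamma_{k(\eta)})$, this group is reductive if and only if $\omega_{\chi_{\eta}}(K|_{A_{\eta}})$ is semisimple as a $\Gamma_{k(\eta)}$-representation. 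Since this object depends only on $K|_{A_{\eta}}$, I am free to shrink $X$ to a nonempty open subset; as $X$ is geometrically reduced (being integral over the algebraically closed $k$), \cite[\href{https://stacks.math.columbia.edu/tag/056V}{Tag 056V}]{stacks-project} and \cite[Thm.~2.13, p.242]{SGA4.5} let me assume that $X$ is smooth over $k$ of pure dimension $d$ and that $K\in\Perv^{\ULA}(A\times X/X)$; the restriction remains semisimple in $D_c^b(A\times X)$ by Lemma \ref{lm:shriksimple}.

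With these reductions in place I would run the decomposition theorem along the proper morphism $p_X\colon A\times X\to X$. Since $p_A^*L_{\chi}$ is a rank-one lisse sheaf on $A\times X$, the functor $-\otimes p_A^*L_{\chi}$ is a perverse t-exact self-equivalence of $D_c^b(A\times X)$ (Lemma \ref{lm:tensorLchi}), hence $K\otimes p_A^*L_{\chi}$ is again semisimple in $D_c^b(A\times X)$, and Fact \ref{ft:BBD} \ref{it:decomp} gives that $Rp_{X*}(K\otimes p_A^*L_{\chi})$ is semisimple in $D_c^b(X)$. By Remark \ref{rk:monogp} all its ordinary cohomology sheaves are lisse; on a smooth variety of pure dimension $d$ the perverse t-structure restricted to complexes with lisse cohomology is the standard t-structure shifted by $d$, so ${}^p\cH^{d}(Rp_{X*}(K\otimes p_A^*L_{\chi}))=L^0(K,\chi)[d]$ is a semisimple perverse sheaf on $X$. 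On the smooth connected $X$ a lisse sheaf $\mathcal{L}$ is irreducible precisely when $\mathcal{L}[d]$ is simple perverse (\cite[Thm.~4.3.1~(ii)]{beilinson2018faisceaux} for one direction; shifting a nonzero sub-local-system by $d$ yields a nonzero perverse subobject for the other), so from the semisimplicity of $L^0(K,\chi)[d]$ — each simple summand being concentrated in cohomological degree $-d$, hence of the form $M_j[d]$ with $M_j$ an irreducible lisse summand of $L^0(K,\chi)$ — it follows that $L^0(K,\chi)$ is a semisimple local system on $X$. Its monodromy representation is therefore semisimple, and since $X$ is normal the surjection $\Gamma_{k(\eta)}\twoheadrightarrow\pet(X,\bar{\eta})$ of Remark \ref{rk:monogp} identifies $\omega_{\chi_{\eta}}(K|_{A_{\eta}})=L^0(K,\chi)_{\bar{\eta}}$ with the corresponding semisimple monodromy module, which is what we need.

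I expect the one subtle point to be the passage in the second paragraph from semisimplicity of $Rp_{X*}(K\otimes p_A^*L_{\chi})$ in $D_c^b(X)$ to semisimplicity of the local system $L^0(K,\chi)$: this rests on identifying the perverse cohomology of a complex whose ordinary cohomology sheaves are all lisse over a smooth base, and on the elementary comparison between simple local systems and simple lisse perverse sheaves. Everything else is either a standard spreading-out reduction or a direct appeal to the cited facts. Note that the hypothesis $\chi\notin\cS(K|_{A_{\eta}})$ is not in fact used in the argument above; it is recorded here because it is the range relevant to the applications, where it guarantees that $\omega_{\chi_{\eta}}$ is an exact fiber functor on $\langle K|_{A_{\eta}}\rangle$ and that $\Mon(K,\chi)$ embeds in the generic convolution group.
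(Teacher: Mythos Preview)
Your argument is correct and follows the same overall arc as the paper—shrink $X$ to smooth, twist, apply the decomposition theorem, and read off semisimplicity of the relevant local system—but the way you extract $L^0(K,\chi)$ from the semisimple pushforward is genuinely different. The paper uses the hypothesis $\chi\notin\cS(K|_{A_{\eta}})$ to force $\cH^nRp_{X*}(K\otimes p_A^*L_{\chi})$ to vanish for $n\neq0$ after a further shrinking (via generic lisseness and generic vanishing of the higher stalks), so that the entire pushforward is a single shifted lisse sheaf; it then invokes \cite[Prop.~3.4.1]{achar2021perverse} to pass from perverse to lisse semisimplicity. You instead shrink once more to make $K$ ULA, so that by Remark~\ref{rk:monogp} all cohomology sheaves are lisse, and then identify ${}^p\cH^d$ with $\cH^0(-)[d]$ on such complexes. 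This lets you isolate $L^0(K,\chi)[d]$ as a semisimple perverse sheaf without ever invoking the vanishing hypothesis, which you rightly observe is unused; your route also replaces the citation to \cite{achar2021perverse} with the direct argument that simple summands of a shifted lisse perverse sheaf are themselves shifted irreducible local systems. The paper's version is a touch more elementary (no ULA spreading, no t-structure comparison), while yours yields a slightly stronger conclusion and makes transparent that the condition on $\chi$ is only needed for the Tannakian embedding later on.
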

\begin{proof}
	By Lemma \ref{lm:shriksimple}, when $X$ is replaced by a nonempty open subset, the semisimplicity of $K$ in $D_c^b(A\times X)$ is preserved. Moreover,  the $\Gamma_{k(\eta)}$-representation $\omega_{\chi_{\eta}}(K|_{A_{\eta}})$ and hence the group $\Mon(K,\chi)$ remain unchanged. Thus, by \cite[\href{https://stacks.math.columbia.edu/tag/056V}{Tag 056V}]{stacks-project}, one may assume that $X$  is smooth. From Lemma \ref{lm:tensorLchi}, as $K$ is semisimple in $D_c^b(A\times X)$, so is $K\otimes^Lp_A^*L_{\chi}$. By Fact \ref{ft:BBD} \ref{it:decomp}, the object $Rp_{X*}(K\otimes^Lp_A^*L_{\chi})$ is semisimple in $D_c^b(X)$. 
	
 Since $\chi\notin \cS(K|_{A_{\eta}})$, for every integer $n\neq0$, one has $H^n(A_{\bar{\eta}},K|_{A_{\bar{\eta}}}\otimes^LL_{\chi})=0.$ By Fact \ref{ft:genericlisse}, there is a nonempty open subset $U_0$ (resp. $U_n$ for every integer $n\neq0$) of $X$ such that $[\cH^0Rp_{X*}(K\otimes^Lp_A^*L_{\chi})]|_{U_0}$ is a $\bar{\Q}_{\ell}$-lisse sheaf (resp. $[\cH^nRp_{X*}(K\otimes^Lp_A^*L_{\chi})]|_{U_n}=0$). The set \[J:=\{n\in\Z:\cH^nRp_{X*}(K\otimes^Lp_A^*L_{\chi})\neq0\}\] is finite and $X$ is irreducible, so $U:=U_0\cap\cap_{n\in J}U_n$ is a nonempty open subset of $X$. Shrinking $X$ to $U$, one may assume further that $\cH^nRp_{X*}(K\otimes^Lp_A^*L_{\chi})=0$ for every integer $n\neq0$, and that $\cH^0Rp_{X*}(K\otimes^Lp_A^*L_{\chi})$ is a $\bar{\Q}_{\ell}$-lisse sheaf on $X$. 
	
	Thus, the semisimple object $Rp_{X*}(K\otimes^Lp_A^*L_{\chi})[\dim X]$ of $D_c^b(X)$ lies in $\Perv(X)$, so it is semisimple in $\Perv(X)$. By \cite[Prop.~3.4.1]{achar2021perverse}, the object $Rp_{X*}(K\otimes^Lp_A^*L_{\chi})$ of $\Loc(X)$ is semisimple.  Because $X$ is smooth, the algebraic group $\Mon(K,\chi)$ is reductive.\end{proof}
\begin{eg}In Lemma \ref{lm:monreductive}, the identity component $\Mon(K,\chi)^0$ may not be semisimple.
	Let $X$ be a smooth, projective, integral algebraic curve over $k$ of genus $1$. Then $\pet(X,\bar{\eta})\cong\hat{\Z}^2$. There exists a character $\chi:\pet(X,\bar{\eta})\to \bar{\Q}_{\ell}^{\times}$ of \emph{infinite} order. Let $A=\Spec(k)$. Then $\cC(A)=\{1\}$ and $\Mon(L_{\chi}|,1)=\G_{m/\bar{\Q}_{\ell}}$ is an algebraic torus. 
\end{eg}

\begin{rk}\label{rk:fromsubvar}
Let $i:Y\to A\times X$ be a closed subvariety, such  that the induced morphism $f:Y\to X$ is smooth  with connected fibers of dimension $d$: \begin{center}
\begin{tikzcd}
	Y \arrow[r, "i", hook] \arrow[d, "f"] & A\times X \arrow[ld, "p_X"] \arrow[d, "p_A"] \\
	X                                     & A.                                            
\end{tikzcd}
\end{center}
By Example \ref{eg:subisULA}, one has $K:=i_*\bar{\Q}_{\ell,Y}[d]\in\Perv^{\ULA}(A\times X/X)$. By Fact \ref{ft:BBD} \ref{it:decomp}, it is semisimple in $D_c^b(A\times X)$. Assume that $X$ is smooth. Then for every $\chi\in \cC(A)\setminus \cS(K|_{A_{\eta}})$, the algebraic group $\Mon(K,\chi)$ coincides with the Zariski closure of the image of the  monodromy representation of the $\bar{\Q}_{\ell}$-lisse sheaf $R^df_*i^*p_A^*L_{\chi}$  on $X$, which is studied in  \cite[Sec.~1.5]{kramer2023arithmetic} (but with coefficient $\C$ instead of $\bar{\Q}_{\ell}$). 
\end{rk}
\subsection{Fixed part}Theorem \ref{thm:introfixed} follows from Theorem  \ref{thm:fixedpart} and Fact \ref{ft:etvanish}, because the union in Condition \ref{it:chinotHj} of Theorem \ref{thm:fixedpart} is in fact a finite union.
\begin{thm}\label{thm:fixedpart}
	Assume that $X$ is smooth. Let $K\in \Perv^{\ULA}(A\times X/X)$ be semisimple $D_c^b(A\times X)$. Then there exists a subobject $K^0\subset K$ in $\Perv^{\ULA}(A\times X/X)$ such that
	for every   $\chi\in \cC(A)$  with \begin{enumerate}
		\item\label{it:chinotHj}  $\chi\notin 	\cup_{j\in \Z}\cS({}^p\cH^j(Rp_{A*}K))$, 
				\item\label{it:chinotKeta} $K|_{A_{\eta}}\in P^{\chi}(A_{\eta})$ and
		\item\label{it:chinotH0} ${}^p\cH^0(Rp_{A*}K)\in P^{\chi}(A)$,
	\end{enumerate}
	one has $\omega_{\chi_{\eta}}(K^0|_{A_{\eta}})=\omega_{\chi_{\eta}}(K|_{A_{\eta}})^{\Gamma_{k(\eta)}}$.
\end{thm}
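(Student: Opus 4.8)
The plan is to imitate the proof of the theorem of the fixed part: construct $K^0$ as the image of a canonical (hence $\chi$-independent) morphism built from the second projection $p_A\colon A\times X\to A$, and then identify $\omega_{\chi_\eta}(K^0|_{A_\eta})$ with the image of a restriction map to which the global invariant cycle theorem applies. For the construction, set $P:={}^p\cH^0(Rp_{A*}K)\in\Perv(A)$. Since $Rp_{A*}K\in D_c^b(A)=D^{\ULA}(A/\Spec k)$ (Fact~\ref{ft:ULA}~\ref{it:ULAoverfld}), base-changing $A\to\Spec k$ along $X\to\Spec k$ and invoking Fact~\ref{ft:ULA}~\ref{it:basechgULA} give $p_A^*Rp_{A*}K\in D^{\ULA}(A\times X/X)$. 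Because $p_A$ is smooth, restricting to the geometric fibres of $p_X$ and using that base change along an extension of algebraically closed fields is perverse t-exact (Lemma~\ref{lm:pervext}) shows that $p_A^*$ is t-exact from the absolute perverse t-structure on $A$ to the relative one on $A\times X/X$; hence ${}^{p/X}\cH^0(p_A^*Rp_{A*}K)=p_A^*P$, which lies in $\Perv^{\ULA}(A\times X/X)$ (Fact~\ref{ft:relperv}~\ref{it:resrelt}). Applying ${}^{p/X}\cH^0$ to the counit $\ev\colon p_A^*Rp_{A*}K\to K$ yields a morphism $\phi\colon p_A^*P\to K$ in $\Perv^{\ULA}(A\times X/X)$; I set $K^0:=\im\phi$.

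Now fix $\chi$ satisfying \ref{it:chinotHj}--\ref{it:chinotH0}. Restriction to the generic fibre $(-)|_{A_\eta}\colon\Perv^{\ULA}(A\times X/X)\to\Perv(A_\eta)$ is exact (Fact~\ref{ft:relULAingeneric}), so $K^0|_{A_\eta}=\im(\phi|_{A_\eta})$ with $(p_A^*P)|_{A_\eta}=\pi^*P$, where $\pi\colon A_\eta\to A$ is the projection. Conditions \ref{it:chinotH0} and \ref{it:chinotKeta}, together with Lemma~\ref{lm:pervext}~\ref{it:scalarextensionofsimple}, place $\pi^*P$, $K|_{A_\eta}$ and the subquotient $K^0|_{A_\eta}$ in $P^{\chi}(A_\eta)$, on which $\omega_{\chi_\eta}=H^0(A_{\bar\eta},-\otimes^LL_\chi)$ is exact; hence $\omega_{\chi_\eta}(K^0|_{A_\eta})=\im\bigl(\omega_{\chi_\eta}(\phi|_{A_\eta})\bigr)$. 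By invariance of étale cohomology under the extension $\overline{k(\eta)}/k$ of algebraically closed fields one has $\omega_{\chi_\eta}(\pi^*P)=H^0(A,P\otimes^LL_\chi)$, and by the projection formula (Fact~\ref{ft:cstprojection}) together with condition \ref{it:chinotHj} — which forces $H^i(A,{}^p\cH^j(Rp_{A*}K)\otimes^LL_\chi)=0$ for $i\neq 0$, so that the perverse Leray spectral sequence for $Rp_{A*}(K\otimes^Lp_A^*L_\chi)$ degenerates — one gets $H^0(A,P\otimes^LL_\chi)=H^0(A\times X,K\otimes^Lp_A^*L_\chi)$. Chasing the adjunctions $(p_A^*,Rp_{A*})$ and $(g'^*,Rg'_*)$ for the canonical morphism $g'\colon A_\eta\to A\times X$, together with the projection formula, then identifies $\omega_{\chi_\eta}(\phi|_{A_\eta})$, under the above isomorphisms, with the restriction map $\operatorname{res}\colon H^0(A\times X,K\otimes^Lp_A^*L_\chi)\to H^0(A_{\bar\eta},K|_{A_{\bar\eta}}\otimes^LL_\chi)=\omega_{\chi_\eta}(K|_{A_\eta})$.

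It remains to compute $\im(\operatorname{res})$. Since $K$ is semisimple in $D_c^b(A\times X)$, so is $K\otimes^Lp_A^*L_\chi$ (Lemma~\ref{lm:tensorLchi}); it is $p_X$-ULA and $p_X$ is proper, so $Rp_{X*}(K\otimes^Lp_A^*L_\chi)$ has lisse cohomology sheaves on all of $X$ (Remark~\ref{rk:monogp}). Hence the global invariant cycle theorem (Fact~\ref{ft:BBD}~\ref{it:globalinv}), applied to $p_X$ with $V=X$, identifies $\im(\operatorname{res})$ with $H^0(A_{\bar\eta},K|_{A_{\bar\eta}}\otimes^LL_\chi)^{\pet(X,\bar\eta)}$. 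Finally, $X$ smooth implies $X$ normal, so $\Gamma_{k(\eta)}\twoheadrightarrow\pet(X,\bar\eta)$ and the monodromy action factors through this surjection (Remark~\ref{rk:monogp}); therefore the invariant subspace equals $\omega_{\chi_\eta}(K|_{A_\eta})^{\Gamma_{k(\eta)}}$, and combining with the previous paragraph gives $\omega_{\chi_\eta}(K^0|_{A_\eta})=\omega_{\chi_\eta}(K|_{A_\eta})^{\Gamma_{k(\eta)}}$.

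The hard part will be the identification, asserted at the end of the second paragraph, of $\omega_{\chi_\eta}(\phi|_{A_\eta})$ with $\operatorname{res}$. This amounts to adjunction yoga plus the spectral-sequence degeneration, but it is precisely where the three hypotheses on $\chi$ get consumed — \ref{it:chinotHj} to collapse $Rp_{A*}K\otimes^LL_\chi$ onto its zeroth perverse cohomology, and \ref{it:chinotKeta}, \ref{it:chinotH0} to keep $\omega_{\chi_\eta}$ exact — and one must carry the $\Gamma_{k(\eta)}$-equivariance along carefully so that Fact~\ref{ft:BBD}~\ref{it:globalinv}, stated over closed points of $X$, transports to the geometric generic point $\bar\eta$. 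The remaining verifications (that $p_A^*Rp_{A*}K$ is $p_X$-ULA, that $p_A^*$ is relatively t-exact, and the cohomology-base-change isomorphisms) are routine given Section~\ref{sec:perv}.
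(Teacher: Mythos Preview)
Your proof is correct and follows essentially the same strategy as the paper's: build $K^0$ as the image of a $\chi$-independent morphism $p_A^*\,{}^p\cH^0(Rp_{A*}K)\to K$, then identify $\omega_{\chi_\eta}$ of that morphism with the global-to-fibre restriction map and invoke the global invariant cycle theorem. The one noteworthy difference is in how the morphism is produced. The paper first proves $Rp_{A*}K\in{}^pD^{\ge0}(A)$ (using Lemma~\ref{lm:shift} and the cohomological amplitude bound for $Rp_{A*}$), so that ${}^p\tau^{\le0}(Rp_{A*}K)={}^p\cH^0(Rp_{A*}K)$, and then transports the truncation map across the $(p_A^*,Rp_{A*})$-adjunction. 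You instead apply ${}^{p/X}\cH^0$ directly to the counit $p_A^*Rp_{A*}K\to K$ and use t-exactness of $p_A^*$ to identify the source with $p_A^*P$; this bypasses the ${}^pD^{\ge0}$ verification and is a genuine simplification. Your caveat about transporting Fact~\ref{ft:BBD}~\ref{it:globalinv} from a closed point to $\bar\eta$ is well taken, but it is the same passage the paper makes (via lisse-ness of $L^0(K,\chi)$ on all of $X$ and surjectivity of $\Gamma_{k(\eta)}\to\pi_1(X,\bar\eta)$), and it goes through.
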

\begin{proof}
	
	By Remark \ref{rk:monogp}, as $K$ is ULA, $\cH^0Rp_{X*}(K\otimes p_A^*L_{\chi})$ is a lisse sheaf on $X$, and since $X$ is smooth,  the canonical morphism $\Gamma_{k(\eta)}\to \pet(X,\bar{\eta})$ is surjective. Thus, from Fact \ref{ft:BBD} \ref{it:globalinv}, as $K$ is semisimple in $D_c^b(A\times X)$, the natural map \begin{equation}
		\label{eq:global}H^0(A\times X,K\otimes^L p_A^*L_{\chi})\to \omega_{\chi_{\eta}}(K|_{A_{\eta}})^{\Gamma_{k(\eta)}}
	\end{equation}
	is surjective.
	
	By Fact \ref{ft:cstprojection}, one has \begin{equation}\label{eq:proj}H^0(A\times X,K\otimes^L p_A^*L_{\chi})=H^0(A,(Rp_{A*}K)\otimes^L L_{\chi}).\end{equation} By Condition \ref{it:chinotHj}, for  any integers $i\neq0$ and $j$,  one has \[H^i(A,{}^p\cH^j(Rp_{A*}K)\otimes^L L_{\chi})=0.\] By Lemma \ref{lm:tensorLchi}, the spectral sequence in \cite[Rk.~8.1.14 (6)]{maxim2019intersection} becomes \[E_2^{i,j}=H^i(A,{}^p\cH^j(Rp_{A*}K)\otimes^L L_{\chi})\Rightarrow H^{i+j}(A,(Rp_{A*}K)\otimes^LL_{\chi}).\] It degenerates at page $E_2$.  Hence \begin{equation}\label{eq:spectral}
		H^0(A,(Rp_{A*}K)\otimes^L L_{\chi})=H^0(A,({}^p\cH^0Rp_{A*}K)\otimes^L L_{\chi}).
	\end{equation}

	Set $K^1:=p_A^*{}^p\cH^0(Rp_{A*}K)\in D_c^b(A\times X)$. By Fact \ref{ft:ULA} \ref{it:ULAoverfld}, one has ${}^p\cH^0(Rp_{A*}K)\in D^{\ULA}(A/k)$. From Fact \ref{ft:ULA} \ref{it:basechgULA}, one gets $K^1\in D^{\ULA}(A\times X/X)$.
	For every $x\in X(k)$,  the restriction $p_A|_{A_x}:A_x\to A$ is an isomorphism of abelian varieties over $k$, so  the  functor 
		$(p_A|_{A_x})^*:	\Perv(A)\to \Perv(A_x)$	is an equivalence of abelian categories.  It sends 	${}^p\cH^0(Rp_{A*}K)$ to $K^1|_{A_x}$, so $K^1|_{A_x}\in \Perv(A_x)$ and hence $K^1\in \Perv^{\ULA}(A\times X/X)$. From $K^1|_{A_{\eta}}=(p_A|_{A_{\eta}})^*{}^p\cH^0(Rp_{A*}K)$ and Condition \ref{it:chinotH0}, one has $K^1|_{A_{\eta}}\in P^{\chi}(A_{\eta})$. Then \begin{equation}\label{eq:omegaK1}\omega_{\chi}(K^1|_{A_{\eta}})=H^0(A,{}^p\cH^0(Rp_{A*}K)\otimes^L L_{\chi}).\end{equation}

By \cite[4.2.4]{beilinson2018faisceaux}, as	every fiber of  $p_A:A\times X\to A$ has dimension $\dim X$,  the functor \[Rp_{A*}[-\dim X]:D_c^b(A\times X)\to D_c^b(A)\] is left t-exact for the absolute perverse t-structures. From Lemma \ref{lm:shift}, as $X$ is smooth and $K$ is ULA, one has $K[\dim X]\in\Perv(A\times X)$	 and so $Rp_{A*}K\in {}^pD^{\ge0}(A)$.  Taking the perverse truncation, one has ${}^p\tau^{\le0}(Rp_{A*}K)={}^p\cH^0(Rp_{A*}K)$. \textit{Via} the adjunction formula (see, \textit{e.g.}, \cite[p.107]{kiehl2001weil}),  the natural morphism \[{}^p\tau^{\le0}(Rp_{A*}K)\to Rp_{A*}K\] in $D_c^b(A)$ (from the definition of t-structure) induces a morphism $h:K^1\to K$ in $D_c^b(A\times X)$.  Then $h$ is a morphism in $\Perv^{\ULA}(A\times X/X)$. 

Let $K^0$ be the image of $h:K^1\to K$ in the abelian category $\Perv^{\ULA}(A\times X/X)$. By Fact \ref{ft:relperv} \ref{it:onDcb}, the functor $(-)|_{A_{\eta}}:\Perv(A\times X/X)\to \Perv(A_{\eta})$ is exact. Then $K^0|_{A_{\eta}}$ is the image of $h|_{A_{\eta}}:K^1|_{A_{\eta}}\to K|_{A_{\eta}}$ in $\Perv(A_{\eta})$. By Condition \ref{it:chinotKeta}, 	because $P^{\chi}(A_{\eta})$ is an abelian subcategory of $\Perv(A_{\eta})$,  the image of $h|_{A_{\eta}}:K^1|_{A_{\eta}}\to K|_{A_{\eta}}$ in $P^{\chi}(A_{\eta})$ is still $K^0|_{A_{\eta}}$. As the functor (\ref{eq:omegachi}) is exact, the image of $\omega_{\chi}(h|_{A_{\eta}}):\omega_{\chi}(K^1|_{A_{\eta}})\to \omega_{\chi}(K|_{A_{\eta}})$ is $\omega_{\chi}(K^0|_{A_{\eta}})$. 
	Combining (\ref{eq:global}), (\ref{eq:proj}), (\ref{eq:spectral}) with (\ref{eq:omegaK1}), one gets $\omega_{\chi}(K^0|_{A_{\eta}})=\omega_{\chi_{\eta}}(K|_{A_{\eta}})^{\Gamma_{k(\eta)}}$.\end{proof}
\subsection{Normality}
By \cite[Thm.~4.3]{javanpeykar2023monodromy}, for every character $\chi\in\cC(A)$, the geometric generic convolution group $G_{\omega_{\chi}}(K|_{A_{\bar{\eta}}})$ is a normal closed subgroup of the generic convolution group $G_{\omega_{\chi}}(K|_{A_{\eta}})$. Theorem \ref{thm:normal} shows that for uncountably many characters,  	the corresponding monodromy group is also a normal closed subgroup of the generic convolution group.

For every $\chi_{\ell'}\in \cC(A)_{\ell'}$ and every $\chi_{\ell}\in \cC(A)_{\ell}$,  set $\chi=\chi_{\ell'}\chi_{\ell}$.  
\begin{thm}\label{thm:normal}Let $K\in\Perv(A\times X/X)$ be semisimple $D_c^b(A\times X)$.
  Then for every $\chi_{\ell'}\in \cC(A)_{\ell'}$, there is a countable union $B=\cup_{i\ge 1}B_i$ of strict closed subsets of $\cC(A)_{\ell}$, such that for every  $\chi_{\ell}\in \cC(A)_{\ell}(\bar{\Q}_{\ell})\setminus B$, \begin{itemize}
 \item one has $K|_{A_{\eta}}\in P^{\chi}(A_{\eta})$;  
 \item  the algebraic group $G_{\omega_{\chi}}(K|_{A_{\eta}})$ is reductive;
 \item and $\Mon(K,\chi)$ is a \emph{normal} closed subgroup of $G_{\omega_{\chi}}(K|_{A_{\eta}})$.
 \end{itemize}\end{thm}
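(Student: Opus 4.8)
\emph{Proof strategy.} The plan is to deduce normality from the criterion of Lemma~\ref{lm:normalcriterion} by feeding it the fixed‑part statement of Theorem~\ref{thm:fixedpart}, applied not to $K$ alone but to every tensor construction on $K|_{A_{\eta}}$, each realized on $A\times X$ through relative convolution on the constant abelian scheme. First I would shrink $X$: by Lemma~\ref{lm:shriksimple} semisimplicity of $K$ in $D_c^b(A\times X)$ survives replacing $X$ by a nonempty open, while $\Mon(K,\chi)=\Mon(K|_{A_{\eta}},\chi_{\eta})$ and $G_{\omega_{\chi}}(K|_{A_{\eta}})$ are unchanged; so by \cite[\href{https://stacks.math.columbia.edu/tag/056V}{Tag 056V}]{stacks-project} and the generic local acyclicity theorem \cite[Thm.~2.13, p.242]{SGA4.5} I may assume $X$ smooth and $K\in\Perv^{\ULA}(A\times X/X)$, whence $K|_{A_{\eta}}$ is semisimple in $\Perv(A_{\eta})$ by Lemma~\ref{lm:genericfibersemisimple}. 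Fix $\chi_{\ell'}\in\cC(A)_{\ell'}$. For $\chi=\chi_{\ell'}\chi_{\ell}$ outside the locus where $K|_{A_{\eta}}\notin P^{\chi}(A_{\eta})$ — a strict Zariski closed subset of $\cC(A)_{\ell}$ by the paragraph following Fact~\ref{ft:etvanish} — the functor $\omega_{\chi_{\eta}}$ is a fiber functor on $\langle K|_{A_{\eta}}\rangle$, the group $G:=G_{\omega_{\chi}}(K|_{A_{\eta}})$ is reductive (the image of $K|_{A_{\eta}}$ in $\bar P(A_{\eta})$ being semisimple, cf.\ \cite[p.69]{simpson1992higgs}), $\Mon(K,\chi)$ is a closed subgroup of $G$ (discussion after Definition~\ref{df:Gmon}), and it is reductive by Lemma~\ref{lm:monreductive} since $\chi\notin\cS(K|_{A_{\eta}})$ by \eqref{eq:vanishPchi}. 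By Lemma~\ref{lm:normalcriterion} it then suffices, after removing a further countable union of strict closed subsets, to know that $W^{\Mon(K,\chi)}$ is $G$‑stable for every $W\in\Rep_{\bar{\Q}_{\ell}}(G)\simeq\langle K|_{A_{\eta}}\rangle$. As $G$ is reductive, every such $W$ is a direct summand of $V_n:=(K|_{A_{\eta}}\oplus(K|_{A_{\eta}})^{\vee})^{\otimes n}$ for some $n\ge1$ (the tensor being ${}^p\cH^0$ of convolution in $\bar P(A_{\eta})$), and if $V=W\oplus W'$ as $G$‑representations then $V^{\Mon(K,\chi)}=W^{\Mon(K,\chi)}\oplus W'^{\Mon(K,\chi)}$, so $W^{\Mon(K,\chi)}=W\cap V_n^{\Mon(K,\chi)}$ is $G$‑stable once $V_n^{\Mon(K,\chi)}$ is. Hence it is enough to handle the countably many $V_n$.

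Next I would realize $V_n$ by an actual sheaf. Let $m_X\colon(A\times X)\times_X(A\times X)\to A\times X$ be the relative group law, put $K^{\vee}:=[-1]^*\D_{(A\times X)/X}K$ for the relative adjoint dual (its formation commutes with base change, so $K^{\vee}|_{A_{\eta}}=(K|_{A_{\eta}})^{\vee}$), and set $\tilde K_n:={}^{p/X}\cH^0\bigl(Rm_{X*}^{(n-1)}\bigl((K\oplus K^{\vee})\boxtimes_X\cdots\boxtimes_X(K\oplus K^{\vee})\bigr)\bigr)$, the $n$‑fold relative convolution. Then $\tilde K_n\in\Perv^{\ULA}(A\times X/X)$: external tensor over $X$ and proper pushforward preserve universal local acyclicity (Fact~\ref{ft:ULA} \ref{it:externaltensorULA}, \ref{it:properULA}), $\D_{(A\times X)/X}$ and $[-1]^*$ preserve it and the relative perverse $t$‑structure (Fact~\ref{ft:relperv} \ref{it:relVerdier}), and ${}^{p/X}\cH^0$ of a ULA object is ULA relative perverse (Fact~\ref{ft:relperv} \ref{it:resrelt}). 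It is moreover semisimple in $D_c^b(A\times X)$: over smooth $X$ the shift dictionary of Lemma~\ref{lm:shift}, together with the fact (implicit in the proof of Lemma~\ref{lm:Scholze} \ref{it:Scholzekey} via \cite[Cor.~1.12]{hansen2023relative}) that every subquotient of $M[\dim X]$ in $\Perv(A\times X)$ for $M\in\Perv^{\ULA}(A\times X/X)$ again lies in $\Perv^{\ULA}(A\times X/X)[\dim X]$, shows that such an $M$ is semisimple in $D_c^b(A\times X)$ if and only if $M|_{A_{\eta}}$ is semisimple in $\Perv(A_{\eta})$; the generic fibre of $(K\oplus K^{\vee})\boxtimes_X\cdots\boxtimes_X(K\oplus K^{\vee})$ is the external tensor power of $K|_{A_{\eta}}\oplus(K|_{A_{\eta}})^{\vee}$, which is semisimple perverse by the Künneth formula for intersection complexes, so that external tensor is semisimple in $D_c^b$, hence so is its proper pushforward by the decomposition theorem (Fact~\ref{ft:BBD} \ref{it:decomp}), and hence so is each relative perverse cohomology by the dictionary again. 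Finally, proper base change and the $t$‑exactness of $(-)|_{A_{\eta}}$ (Fact~\ref{ft:relperv} \ref{it:onDcb}) identify $\tilde K_n|_{A_{\eta}}$ with ${}^p\cH^0$ of the $n$‑fold convolution of $K|_{A_{\eta}}\oplus(K|_{A_{\eta}})^{\vee}$ on $A_{\eta}$, i.e.\ with $V_n$ in $\bar P(A_{\eta})$, compatibly with the $\Gamma_{k(\eta)}$‑actions; in particular $\omega_{\chi_{\eta}}(\tilde K_n|_{A_{\eta}})^{\Gamma_{k(\eta)}}=V_n^{\Mon(K,\chi)}$.

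To conclude I would apply Theorem~\ref{thm:fixedpart} to each $\tilde K_n$, obtaining a subobject $\tilde K_n^0\subset\tilde K_n$ in $\Perv^{\ULA}(A\times X/X)$. For the fixed $\chi_{\ell'}$, its three hypotheses for $\tilde K_n$ — $\chi\notin\bigcup_j\cS({}^p\cH^j(Rp_{A*}\tilde K_n))$, $\tilde K_n|_{A_{\eta}}\in P^{\chi}(A_{\eta})$, and ${}^p\cH^0(Rp_{A*}\tilde K_n)\in P^{\chi}(A)$ — fail only on strict Zariski closed subsets of $\cC(A)_{\ell}$ by Fact~\ref{ft:etvanish} and the paragraph following it (applied over $k$ and over $k(\eta)$, the union over $j$ being finite); adjoining the locus $K|_{A_{\eta}}\notin P^{\chi}(A_{\eta})$ yields a strict closed subset $B_n$. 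Put $B:=\bigcup_{n\ge1}B_n$. For $\chi_{\ell}\in\cC(A)_{\ell}(\bar{\Q}_{\ell})\setminus B$ one has $K|_{A_{\eta}}\in P^{\chi}(A_{\eta})$, the group $G=G_{\omega_{\chi}}(K|_{A_{\eta}})$ is reductive and contains $\Mon(K,\chi)$ as a closed reductive subgroup as in the first paragraph, and for every $n$ Theorem~\ref{thm:fixedpart} gives $\omega_{\chi_{\eta}}(\tilde K_n^0|_{A_{\eta}})=\omega_{\chi_{\eta}}(\tilde K_n|_{A_{\eta}})^{\Gamma_{k(\eta)}}=V_n^{\Mon(K,\chi)}$. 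Since $\tilde K_n^0|_{A_{\eta}}$ is a subobject of $\tilde K_n|_{A_{\eta}}$ inside the Tannakian subcategory $\langle K|_{A_{\eta}}\rangle$, the left‑hand side is a $G$‑subrepresentation of $V_n$; thus $V_n^{\Mon(K,\chi)}$ is $G$‑stable for all $n$, hence $W^{\Mon(K,\chi)}$ is $G$‑stable for all $W\in\Rep_{\bar{\Q}_{\ell}}(G)$, and Lemma~\ref{lm:normalcriterion} gives that $\Mon(K,\chi)$ is normal in $G_{\omega_{\chi}}(K|_{A_{\eta}})$.

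The main obstacle, I expect, is the input of the second paragraph: that relative convolution of semisimple ULA relative perverse sheaves on the constant abelian scheme, followed by ${}^{p/X}\cH^0$, stays in the same class, and that the outcome restricts on $A_{\eta}$ to the abstract tensor construction together with the correct $\Gamma_{k(\eta)}$‑action. This rests on the shift dictionary (Lemma~\ref{lm:shift}), on the equivalence ``$D_c^b$‑semisimple $\Leftrightarrow$ generic fibre semisimple'' for ULA relative perverse sheaves over a smooth base, on a relative Künneth formula, and on the decomposition theorem; the rest is an assembly of results already recalled.
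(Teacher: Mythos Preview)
Your overall architecture matches the paper's: reduce to smooth $X$ with $K$ ULA, use the fixed-part theorem on a countable family of semisimple ULA relative perverse sheaves whose generic fibres exhaust $\langle K|_{A_{\eta}}\rangle$, and conclude via Lemma~\ref{lm:normalcriterion}. Your reduction to the countably many $V_n=(K|_{A_\eta}\oplus (K|_{A_\eta})^\vee)^{\otimes n}$ by the direct-summand trick is a pleasant simplification over the paper's enumeration of all isomorphism classes (Lemma~\ref{lm:repred}), and the ``dictionary'' you state (for $M\in\Perv^{\ULA}(A\times X/X)$ over smooth $X$, $M$ is semisimple in $D_c^b$ iff $M|_{A_\eta}$ is semisimple in $\Perv(A_\eta)$) is correct, using Fact~\ref{ft:relULAingeneric} and Lemma~\ref{lm:Scholze}~\ref{it:ScholzeSerre}.

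The gap is exactly where you flag it. To apply Theorem~\ref{thm:fixedpart} to $\tilde K_n$ you need $\tilde K_n$ semisimple in $D_c^b(A\times X)$, and by your dictionary this reduces to showing that the $n$-fold \emph{external} tensor power of $K|_{A_\eta}\oplus(K|_{A_\eta})^\vee$ is semisimple in $\Perv(A_\eta^n)$. But $k(\eta)$ is not algebraically closed, and ``K\"unneth for intersection complexes'' only tells you that $IC(V_1,L_1)\boxtimes IC(V_2,L_2)\simeq IC(V_1\times V_2,L_1\boxtimes L_2)$; it does \emph{not} tell you that $L_1\boxtimes L_2$ is a semisimple local system. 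Already for $V_i=\Spec k(\eta)$ this is the assertion that the tensor product of two irreducible continuous $\bar{\Q}_\ell$-representations of $\Gamma_{k(\eta)}$ is semisimple, which is not a formality and is nowhere established in the paper. Your alternative route through the external product over $k$ followed by restriction along the diagonal $A^n\times X\hookrightarrow (A\times X)^n$ fails for the reason you implicitly avoid: pullback along a closed immersion does not preserve semisimplicity, so there is no way to import the decomposition theorem over $k$ at that stage.

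The paper sidesteps this entirely. It never claims that $\tilde K_n$ (its $M$) is semisimple. Instead, for each object $N$ of $\langle K|_{A_\eta}\rangle$ it lifts the \emph{simple} constituents of $N$ one at a time from $\bar P(A_\eta)$ to $\Perv(A_\eta)$ via Lemma~\ref{lm:Serrequotient}, then from $\Perv(A_\eta)$ to $\Perv^{\ULA}(A\times X/X)$ via Fact~\ref{ft:relULAingeneric}; each such lift is simple in $\Perv^{\ULA}$ and hence, by the Serre subcategory statement Lemma~\ref{lm:Scholze}~\ref{it:ScholzeSerre}, simple in $\Perv(A\times X)$ after shift. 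A direct sum of these is then automatically semisimple, and Theorem~\ref{thm:fixedpart} applies. Your argument becomes correct if you make this same move: rather than applying Theorem~\ref{thm:fixedpart} to $\tilde K_n$ itself, apply it to the (finitely many) simple subquotients of $\tilde K_n$ in $\Perv^{\ULA}(A\times X/X)$, or to their direct sum. After that change, your proof and the paper's are essentially the same, with your $V_n$-reduction replacing the appeal to Lemma~\ref{lm:repred}.
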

By Lemma \ref{lm:XnBaire}, when $\dim A>0$,  the set $\cC(A)_{\ell}(\bar{\Q}_{\ell})\setminus B$ is uncountable. Thus, Theorem \ref{thm:mainnormal} follows from Theorem \ref{thm:normal}. We sketch the proof of Theorem \ref{thm:normal}. By Theorem \ref{thm:introfixed}, for every representation $V$ of the Tannakian group $G(K|_{A_{\eta}})$ and every $\chi_{\ell'}\in \cC(A)_{\ell'}$,  there is a strict Zariski closed subset $B_V$ of the cotorus $\cC(A)_{\ell}$, such that for every $\chi_{\ell}\in (\cC(A)_{\ell}\setminus B_V)(\bar{\Q}_{\ell})$, the monodromy invariant $V^{\Mon(K,\chi)}$ is a $G(K|_{A_{\eta}})$-subrepresentation. Choose $B= \cup_VB_V(\bar{\Q}_{\ell})$. From Lemma \ref{lm:normalcriterion},  normality holds when $\chi_{\ell}\notin B$. 
\begin{proof}  Both $\Mon(K,\chi)$ and $G_{\omega_{\chi}}(K|_{A_{\eta}})$ depend only on the generic fiber of $p_X:A\times X\to X$. Therefore, shrinking $X$ to a nonempty open subset does not change  them. Thus, one may assume that $X$  is smooth. By \cite[Thm.~2.13, p.242]{SGA4.5}, one may assume further $K\in \Perv^{\ULA}(A\times X/X)$. By smoothness of $X$ and Lemma \ref{lm:genericfibersemisimple}, the object $K|_{A_{\eta}}$ of $\Perv(A_{\eta})$ is semisimple. 
From Lemma \ref{lm:Serrequotient} \ref{it:LongLiu}, $K|_{A_{\eta}}$ is also semisimple in $\bar{P}(A_{\eta})$. Therefore, a (hence every) Tannakian group of the neutral Tannakian category $\langle K|_{A_{\eta}}\rangle(\subset \bar{P}(A_{\eta}))$ is a \emph{reductive}, algebraic group over $\bar{\Q}_{\ell}$.
 Then by Lemma \ref{lm:repred}, there is a countable sequence of objects $\{\bar{K}_i\}_{i\ge1}$, such that every object of $\langle K|_{A_{\eta}}\rangle$ is isomorphic to some $\bar{K}_i$. To apply Theorem \ref{thm:introfixed}, we need semisimple objects of $D_c^b(A\times X)$.
	
	\begin{claim}\label{it:liftgeneric} For every object $N\in \langle K|_{A_{\eta}}\rangle$, there is  $L\in  \Perv^{\ULA}(A\times X/X)$ that is semisimple in $D_c^b(A\times X)$, such that $L|_{A_{\eta}}$ is isomorphic to $N$ in $\bar{P}(A_{\eta})$.\end{claim}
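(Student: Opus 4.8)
The plan is to realize $N$, modulo the negligible subcategory $N(A_\eta)$, as a direct summand of a relative convolution power of $K\oplus K^{\vee}$, to build that power inside $\Perv^{\ULA}(A\times X/X)$ while keeping it semisimple in $D_c^b(A\times X)$, and then to split off the part corresponding to $N$. First I would construct a \emph{relative} adjoint dual: with $\iota:=[-1]_A\times\mathrm{id}_X$ an automorphism of $A\times X$ over $X$, set $K^{\vee}:=\iota^*\D_{A\times X/X}K$. By Fact \ref{ft:relperv} \ref{it:relVerdier}, $\D_{A\times X/X}K\in\Perv^{\ULA}(A\times X/X)$, hence $K^{\vee}\in\Perv^{\ULA}(A\times X/X)$; since $X$ is smooth, $\D_{A\times X/X}K$ differs from $\D_{A\times X}K$ only by a shift and a Tate twist (as in the proof of Lemma \ref{lm:shift}), so $K^{\vee}$ is semisimple in $D_c^b(A\times X)$; and because $\D_{A\times X/X}$ commutes with base change in $X$, one has $K^{\vee}|_{A_\eta}=(K|_{A_\eta})^{\vee}$ in $\Perv(A_\eta)$. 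Put $M_0:=K\oplus K^{\vee}$.

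For the relative convolution, let $m_X^{(n)}\colon A^n\times X\to A\times X$ be the $n$-fold relative group law (proper, over $X$), and set $M_0^{*_Xn}:=Rm^{(n)}_{X*}\bigl(M_0\boxtimes_X\cdots\boxtimes_X M_0\bigr)$. By Fact \ref{ft:ULA} \ref{it:externaltensorULA} the relative external product lies in $D^{\ULA}(A^n\times X/X)$, so $M_0^{*_Xn}\in D^{\ULA}(A\times X/X)$ by Fact \ref{ft:ULA} \ref{it:properULA}, and $\tilde T:={}^{p/X}\mathcal H^0\bigl(M_0^{*_Xn}\bigr)\in\Perv^{\ULA}(A\times X/X)$ by Fact \ref{ft:relperv} \ref{it:resrelt}. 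Proper base change gives $M_0^{*_Xn}|_{A_\eta}=(M_0|_{A_\eta})^{*n}$, the $n$-fold convolution on the abelian variety $A_\eta$; and since $(-)|_{A_\eta}\colon D^{\ULA}(A\times X/X)\to D_c^b(A_\eta)$ is t-exact for the relative resp.\ absolute perverse t-structures (combine Lemma \ref{lm:shift} and Lemma \ref{lm:genericfiber}), one gets $\tilde T|_{A_\eta}={}^p\mathcal H^0\bigl((M_0|_{A_\eta})^{*n}\bigr)$, whose image in $\bar P(A_\eta)$ is $(K|_{A_\eta}\oplus(K|_{A_\eta})^{\vee})^{\otimes n}$. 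As $\langle K|_{A_\eta}\rangle$ is semisimple and $N$ is a subquotient of this tensor power for some $n$, $N$ is then a direct summand of the image of $\tilde T|_{A_\eta}$ in $\bar P(A_\eta)$.

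The crux is that $\tilde T$ is semisimple in $D_c^b(A\times X)$. I would first record a converse to Lemma \ref{lm:genericfibersemisimple}: if $M\in\Perv^{\ULA}(A\times X/X)$ has $M|_{A_\eta}$ semisimple in $\Perv(A_\eta)$, then $M$ is semisimple in $D_c^b(A\times X)$. Indeed, by Lemma \ref{lm:Scholze} \ref{it:ScholzeSerre} the functor $(-)[\dim X]$ identifies $\Perv^{\ULA}(A\times X/X)$ with a Serre subcategory of $\Perv(A\times X)$, so every subobject of $M[\dim X]$ in $\Perv(A\times X)$ is $P[\dim X]$ for a subobject $P\subset M$ in $\Perv^{\ULA}(A\times X/X)$; since $(-)|_{A_\eta}$ restricted to $\Perv^{\ULA}(A\times X/X)$ is exact, fully faithful, and has essential image stable under subquotients (Fact \ref{ft:relULAingeneric}), the splitting of $M|_{A_\eta}$ along $P|_{A_\eta}$ lifts to a splitting of $M$ along $P$, hence of $M[\dim X]$ along $P[\dim X]$; Lemma \ref{lm:finisubq} \ref{it:subhascom} then yields semisimplicity. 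Applying this, $\tilde T|_{A_\eta}={}^p\mathcal H^0((M_0|_{A_\eta})^{*n})$ is semisimple in $\Perv(A_\eta)$ because $M_0|_{A_\eta}$ is semisimple perverse (Lemma \ref{lm:genericfibersemisimple}) and an iterated convolution $Rm_*(\,\cdot\,\boxtimes\,\cdot\,)$ of semisimple complexes on an abelian variety over a characteristic‑$0$ field stays semisimple in $D_c^b$, by the decomposition theorem over $k(\eta)$ valid in characteristic $0$ as recalled after Fact \ref{ft:BBD}. Hence $\tilde T$ is semisimple in $D_c^b(A\times X)$.

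Finally I would extract $N$. Write $\tilde T|_{A_\eta}=\bigoplus_i S_i^{m_i}$ with the $S_i$ pairwise non-isomorphic simple perverse sheaves on $A_\eta$. Each $S_i$-isotypic summand $S_i^{m_i}\subset\tilde T|_{A_\eta}$ equals $T_i|_{A_\eta}$ for a subobject $T_i\subset\tilde T$ in $\Perv^{\ULA}(A\times X/X)$ (Fact \ref{ft:relULAingeneric}); as $(-)|_{A_\eta}$ reflects isomorphisms on $\Perv^{\ULA}(A\times X/X)$, one gets $\tilde T=\bigoplus_iT_i$, each $T_i$ semisimple in $D_c^b(A\times X)$ by the converse above. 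In $\bar P(A_\eta)$ the summand $N$ is $\bigoplus_i\bar S_i^{k_i}$ with $0\le k_i\le m_i$ and $S_i\notin N(A_\eta)$ whenever $k_i>0$; for each such $i$ choose a subobject $L_i\subset T_i$ in $\Perv^{\ULA}(A\times X/X)$ with $L_i|_{A_\eta}=S_i^{k_i}$, again semisimple in $D_c^b(A\times X)$. Then $L:=\bigoplus_{k_i>0}L_i$ lies in $\Perv^{\ULA}(A\times X/X)$, is semisimple in $D_c^b(A\times X)$, and $L|_{A_\eta}$ has image $N$ in $\bar P(A_\eta)$. The main obstacle is exactly this semisimplicity of $\tilde T$ in $D_c^b(A\times X)$: it cannot be seen relative‑perversely and must be transported through the generic fibre in both directions (Lemma \ref{lm:genericfibersemisimple} and its converse), which rests on the decomposition theorem over the non‑closed field $k(\eta)$ and on matching the abstract Tannakian tensor powers in $\bar P(A_\eta)$ with honest relative convolutions.
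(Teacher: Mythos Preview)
Your construction of $K^{\vee}$, of the relative convolution $*_X$, of $\tilde T={}^{p/X}\cH^0(M_0^{*_Xn})\in\Perv^{\ULA}(A\times X/X)$, and of the identification $\tilde T|_{A_\eta}={}^p\cH^0((M_0|_{A_\eta})^{*n})$ all match the paper. Your ``converse to Lemma~\ref{lm:genericfibersemisimple}'' is also correct and is essentially how the paper passes from semisimplicity in $\Perv^{\ULA}(A\times X/X)$ to semisimplicity in $\Perv(A\times X)$ via Lemma~\ref{lm:Scholze}~\ref{it:ScholzeSerre}.

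The gap is the assertion that $\tilde T|_{A_\eta}$ is semisimple in $\Perv(A_\eta)$ ``by the decomposition theorem over $k(\eta)$ \dots\ as recalled after Fact~\ref{ft:BBD}''. Nothing in the paper furnishes a decomposition theorem over the non--algebraically closed field $k(\eta)$: Fact~\ref{ft:BBD} is stated only for algebraically closed $k$, and Lemma~\ref{lm:pervext} concerns extensions of algebraically closed fields. The natural attempt of base-changing to $\overline{k(\eta)}$, applying Fact~\ref{ft:BBD} there, and descending fails, because semisimplicity in $\Perv$ does \emph{not} descend along $\overline{k(\eta)}/k(\eta)$: any non-semisimple continuous $\bar\Q_\ell$-representation of $\Gamma_{k(\eta)}$ (these exist, as $\Gamma_{k(\eta)}$ surjects onto $\hat\Z$) gives a perverse sheaf on $\Spec k(\eta)$, hence on $A_\eta$ via pullback, that is geometrically semisimple but not semisimple. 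For the same reason, the external product over $k(\eta)$ of semisimple perverse sheaves need not be semisimple. So the input to your converse lemma is not established.

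The paper avoids this entirely: it never claims that $M|_{A_\eta}$ is semisimple. Instead, since $N$ is semisimple in $\bar P(A_\eta)$, Lemma~\ref{lm:Serrequotient}~\ref{it:liftsssq} lifts each simple constituent of $N$ to a \emph{simple} subquotient of $M|_{A_\eta}$ in $\Perv(A_\eta)$; their direct sum $L'$ is semisimple in $\Perv(A_\eta)$ with image $N$ in $\bar P(A_\eta)$. Each simple piece of $L'$ lies in the essential image of $\Perv^{\ULA}(A\times X/X)$ by Fact~\ref{ft:relULAingeneric}, giving $L\in\Perv^{\ULA}(A\times X/X)$ with $L|_{A_\eta}=L'$; and by Lemma~\ref{lm:Scholze}~\ref{it:ScholzeSerre} each simple piece of $L$ remains simple in $\Perv(A\times X)$, so $L$ is semisimple in $D_c^b(A\times X)$. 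Replacing your appeal to a decomposition theorem over $k(\eta)$ by this Serre-quotient lifting step would repair the argument.
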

	
From Claim \ref{it:liftgeneric}, for every integer $i\ge1$, there is $K_i\in \Perv^{\ULA}(A\times X/X)$ that is semisimple in $D_c^b(A\times X)$ with $K_i|_{A_{\eta}}$ isomorphic to $\bar{K}_i$ in $\bar{P}(A_{\eta})$. From smoothness of $X$ and Theorem \ref{thm:introfixed}, there is a subobject $K_i^0\subset K_i$ in  $\Perv^{\ULA}(A\times X/X)$ and a strict Zariski closed subset $B_i\subset\cC(A)_{\ell}$, such that for every $\chi_{\ell}\in(\cC(A)_{\ell}\setminus B_i)(\bar{\Q}_{\ell})$, one has $K_i|_{A_{\eta}}\in P^{\chi}(A_{\eta})$ and \begin{equation}\label{eq:outBiinv}\omega_{\chi_{\eta}}(K_i|_{A_{\eta}})^{\Gamma_{k(\eta)}}=\omega_{\chi_{\eta}}(K_i^0|_{A_{\eta}}).\end{equation}

	Set $B:=\cup_{i\ge 1}B_i$.  For	every $\chi_{\ell}\in  \cC(A)_{\ell}(\bar{\Q}_{\ell})\setminus B$, one has $K|_{A_{\eta}}\in P^{\chi}(A_{\eta})$. For every $i\ge 1$, by $\chi_{\ell}\notin B_i(\bar{\Q}_{\ell})$ and \eqref{eq:outBiinv}, the subspace $\omega_{\chi_{\eta}}(K_i|_{A_{\eta}})^{\Mon(K,\chi)}$ is $G_{\omega_{\chi}}(K|_{A_{\eta}})$-stable. By Lemmas \ref{lm:normalcriterion} and \ref{lm:monreductive}, the  subgroup $\Mon(K,\chi)$  of $G_{\omega_{\chi}}(K|_{A_{\eta}})$ is  \emph{normal}.
\end{proof}
	\begin{proof}[Proof of Claim \ref{it:liftgeneric}]	From Lemma \ref{lm:repred},  the object $N\in \bar{P}(A_{\eta})$ is semisimple. There is an integer $n\ge0$ such that $N$ is a subquotient of $(K|_{A_{\eta}}\oplus K|_{A_{\eta}}^{\vee})^{*n}$ in $\bar{P}(A_{\eta})$. 
		
		We ``globalize" the fiberwise convolution functors as follows.	Define a bifunctor \begin{equation}\label{eq:rel*}\begin{aligned}*_X:& D_c^b(A\times X)\times D_c^b(A\times X)\to D_c^b(A\times X),\\
				&(-,+)\mapsto R(m\times \Id_X)_*(p_{13}^*-\otimes^Lp_{23}^*+),\end{aligned}
		\end{equation} where $p_{ij}$ are the projections on $A\times A\times X$. By the proper base change theorem, for every $x\in X(k)$, one has $(-*_X+)|_{A_x}\iso (-|_{A_x})*(+|_{A_x})$ as bifunctors $D_c^b(A\times X)\times D_c^b(A\times X)\to D_c^b(A_x)$. Therefore, one has $(-*_X+)|_{A_{\eta}}\iso(-|_{A_{\eta}})*(+|_{A_{\eta}})$ as bifunctors  $D_c^b(A\times X)\times D_c^b(A\times X)\to D_c^b(A_{\eta})$.

We prove that	the bifunctor (\ref{eq:rel*}) restricts to a bifunctor $D^{\ULA}(A\times X/X)\times D^{\ULA}(A\times X/X)\to D^{\ULA}(A\times X/X)$. By Fact \ref{ft:ULA} \ref{it:externaltensorULA}, for any $K',K''\in D^{\ULA}(A\times X/X)$,   one has \[p_{13}^*K'\otimes^Lp_{23}^*K''\in D^{\ULA}(A\times A\times X/X).\]  By Fact \ref{ft:ULA} \ref{it:properULA}, one gets $K'*_XK''\in D^{\ULA}(A\times X/X)$.
		
		Set $K^{\vee}:=([-1]_A\times \Id_X)^*\D_{A\times X/X}K$. By Fact \ref{ft:relperv} \ref{it:relVerdier}, one has $K^{\vee}\in \Perv^{\ULA}(A\times X/X)$ and $(K^{\vee})|_{A_{\eta}}=(K|_{A_{\eta}})^{\vee}$. Then \[(K\oplus K^{\vee})^{*_Xn})\in D^{\ULA}(A\times X/X).\] Set $M:={}^{p/X}\cH^0((K\oplus K^{\vee})^{*_Xn})\in\Perv^{\ULA}(A\times X/X)$. Then $M|_{A_{\eta}}={}^p\cH^0([K|_{A_{\eta}}\oplus (K|_{A_{\eta}})^{\vee}]^{*n})$ in $\Perv(A_{\eta})$. By Lemma \ref{lm:Serrequotient} \ref{it:liftsssq}, there is a semisimple subquotient $L'$ of $M|_{A_{\eta}}$ in $\Perv(A_{\eta})$, whose image in $\bar{P}(A_{\eta})$ is $N$. By smoothness of $X$ and Fact \ref{ft:relULAingeneric}, there is a semisimple subquotient $L$ of $M$ in $\Perv^{\ULA}(A\times X/X)$ with $L|_{A_{\eta}}=L'$. By smoothness of $X$ and Lemma \ref{lm:Scholze} \ref{it:ScholzeSerre}, the object $L[\dim X]$ is  semisimple in $\Perv(A\times X)$.  Then $L$ is semisimple in $D_c^b(A\times X)$. \end{proof}
	For a category $\cC$, let $\cC/\sim$ be the class of isomorphism classes of objects in $\cC$.\begin{lm}\label{lm:repred}
Let $(\cC,\otimes)$ be a neutral Tannakian category over an algebraically closed field $k$ of characteristic $0$ with a fiber functor $\omega:\cC\to \Vec_k$. Assume that $\Aut^{\otimes}(\cC,\omega)$ is a reductive, algebraic group over $k$. Then the underlying abelian category is semisimple, and $\cC/\sim$ is countable. 
\end{lm}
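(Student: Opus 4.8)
The plan is to identify $\cC$ with $\Rep_k(G)$ for its Tannakian group $G$, and then deduce both assertions from the structure theory of linearly reductive groups. Set $G := \Aut^{\otimes}(\cC,\omega)$, which by hypothesis is an algebraic group over $k$. By the Tannakian reconstruction theorem \cite[Thm.~2.11]{deligne2022tannakian}, the fiber functor $\omega$ induces a tensor equivalence $\cC \iso \Rep_k(G)$. Since $\mathrm{char}(k)=0$ and $G$ is reductive, $G$ is \emph{linearly reductive}: its identity component $G^0$ is connected reductive, hence linearly reductive in characteristic $0$; the component group $G/G^0$ is finite, hence linearly reductive by Maschke's theorem; and an extension of linearly reductive groups is linearly reductive (the functor $V\mapsto V^{G}=(V^{G^0})^{G/G^0}$ is a composite of exact functors). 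Consequently every object of $\Rep_k(G)$ is a finite direct sum of simple objects, and transporting this through the equivalence shows that the underlying abelian category of $\cC$ is semisimple.

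For countability, note first that, because every object of $\Rep_k(G)$ is a finite direct sum of irreducibles, the class $\cC/\sim$ is countable as soon as the set $\widehat{G}$ of isomorphism classes of irreducible objects of $\Rep_k(G)$ is countable (the set of finite multisets drawn from a countable set is countable). To bound $\widehat{G}$, recall the algebraic Peter--Weyl decomposition $k[G]\cong\bigoplus_{V\in\widehat{G}}V^{\oplus\dim_k V}$ of the regular representation, valid because $G$ is linearly reductive: every $V\in\widehat{G}$ occurs, so $|\widehat{G}|\le\dim_k k[G]$. Finally $G$ is an affine \emph{algebraic} group, so $k[G]$ is a finitely generated $k$-algebra, i.e.\ a quotient of some polynomial ring $k[x_1,\dots,x_n]$; its monomials span $k[x_1,\dots,x_n]$ over $k$ and are indexed by $\N^n$, whence $\dim_k k[G]\le\aleph_0$. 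Therefore $|\widehat{G}|\le\aleph_0$, and $\cC/\sim$ is countable.

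The step that requires genuine care is the counting in the second paragraph, where the (possibly uncountable) cardinality of $k$ must not be allowed to intrude: one should resist enumerating objects via the dimensions of their $\Hom$-spaces, which a priori range over an index set of size $|k|$. The correct device is the elementary fact, used above, that a finitely generated $k$-algebra has at most countable $k$-dimension regardless of $|k|$, fed into Peter--Weyl. When $G$ is connected one may alternatively invoke the highest-weight classification of irreducibles, which embeds $\widehat{G}$ into the cone of dominant weights inside the character lattice of a maximal torus (a finitely generated, hence countable, abelian group), and then handle $G/G^0$ by Clifford theory; the Peter--Weyl route has the advantage of needing no case distinction on $\pi_0(G)$.
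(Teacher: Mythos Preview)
Your proof is correct. The semisimplicity half is essentially the paper's argument, just unpacked: the paper cites \cite[Cor.~22.43]{milne2017algebraic} directly, while you spell out the reduction to $G^0$ and $G/G^0$.

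The countability half is genuinely different. The paper invokes \cite[Thm.~2.16, Lem.~2.19]{achar2020representation}: there is an at most countable index set $X^+$ and, for each $\lambda\in X^+$, a finite-dimensional semisimple $k$-algebra $\mathscr{A}^{\lambda}$ whose simple modules parametrize the irreducibles of a given ``type''; finiteness of the latter set then comes from Wedderburn. Your route is more elementary and self-contained: the single observation that a finitely generated $k$-algebra has at most countable $k$-dimension, combined with the fact that every irreducible embeds in the regular representation, bounds $\widehat{G}$ immediately and uniformly, with no case analysis on $\pi_0(G)$ and no external structural classification. The paper's argument, by contrast, imports more machinery but yields finer information (an actual parametrization of $\widehat{G}$), which is not needed here. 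Either way, the key hypothesis used for countability is that $G$ is an \emph{algebraic} group; you correctly flag that the size of $k$ must not enter, and your argument avoids it cleanly.
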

\begin{proof}Set $G=\Aut^{\otimes}(\cC,\omega)$. Let $\Rep(G)$ be the category of $k$-rational representations of $G$.  Then  $\cC$ is equivalent to $\Rep(G)$. By \cite[Cor.~22.43]{milne2017algebraic}, 
because $k$ has characteristic $0$, the abelian category $\Rep(G)$ is semisimple. By \cite[Thm.~2.16]{achar2020representation}, as $k$ is algebraically closed, there is an  at most countable set $X^+$ and for every $\lambda\in X^+$, a unital  $k$-algebra $\mathscr{A}^{\lambda}$ with the following property: The set $\mathrm{Irr}(G)$ of isomorphism classes of simple objects of $\Rep(G)$ is in bijection with the set of pairs $(\lambda,E)$, where $\lambda\in X^+$ and $E$ is an isomorphism class of simple left $\mathscr{A}^{\lambda}$-modules. From \cite[Lem.~2.19]{achar2020representation}, for every $\lambda\in X^+$,  the algebra $\mathscr{A}^{\lambda}$ is semisimple. Then by \cite[XVII, Thm.~4.3, Cor.~4.5]{Lang2002algebra}, the set of isomorphism classes of simple left $\mathscr{A}^{\lambda}$-modules is finite. Therefore, $\mathrm{Irr}(G)$ is at most countable. Consequently, $\Rep(G)/\sim$ is countable.	
\end{proof}
\begin{lm}\label{lm:Serrequotient}
	Let $\cA$ be an abelian category. Let $\cB\subset\cA$ be a Serre  subcategory. Consider the quotient functor $F:\cA\to \cA/\cB$. 
	\begin{enumerate}\item\label{it:LongLiu}  Let $X\in\cA$.  Let $i:Y\to F(X)$ be a monomorphism  in $\cA/\cB$. Then there is a monomorphism $j:Z\to X$ in $\cA$ and an isomorphism $u:Y\to F(Z)$ in $\cA/\cB$ fitting into a commutative diagram in $\cA/\cB$ \begin{center}
			\begin{tikzcd}
				& F(Z) \arrow[rd, "F(j)"] &   \\
				Y \arrow[ru, "u"] \arrow[rr, "i", hook] &                      & F(X).
			\end{tikzcd}
		\end{center} Dually, up to isomorphism every quotient in $\cA/\cB$ lifts to a quotient in $\cA$. In particular, if $X\in \cA$ is a simple object, then $F(X)$ is either simple or zero in $\cA/\cB$.
		\item\label{it:liftsimple} Let $V\in \cA$ be a Noetherian and Artinian object. If $F(V)$ is simple in $\cA/\cB$, then there is a simple subquotient $W$ of $V$ in $\cA$ such that $F(W)$ is naturally isomorphic to $F(V)$ in $\cA/\cB$.
		\item\label{it:liftsssq} Assume that $\cA$ is Noetherian and Artinian. Let $X\in\cA$.  Let $Y$ be a simple subquotient of $F(X)$ in $\cA/\cB$. Then there is a simple subquotient $W$ of $X$, with $F(W)$ naturally isomorphic to $Y$ in $\cA/\cB$.
	\end{enumerate}
\end{lm}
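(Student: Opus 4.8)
The plan is to argue entirely inside the standard calculus-of-fractions model of the Serre quotient: the objects of $\cA/\cB$ are those of $\cA$, the functor $F$ is the identity on objects and is exact, and for $A,B\in\cA$ one has $\Hom_{\cA/\cB}(F(A),F(B))=\varinjlim_{(A',B')}\Hom_{\cA}(A',B/B')$, the colimit running over subobjects $A'\subseteq A$ with $A/A'\in\cB$ and subobjects $B'\subseteq B$ with $B'\in\cB$. Part \ref{it:LongLiu} is the heart of the matter; Parts \ref{it:liftsimple} and \ref{it:liftsssq} then follow formally from it together with the existence of composition series.

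For Part \ref{it:LongLiu} I would write $Y=F(Y_0)$ for some $Y_0\in\cA$ and represent the monomorphism $i$ by a fraction, i.e.\ a morphism $g\colon Y_0'\to X/X_0$ in $\cA$ with $Y_0/Y_0'\in\cB$ and $X_0\in\cB$ whose class in the above colimit is $i$. Factoring $g$ as $Y_0'\twoheadrightarrow\bar Z\hookrightarrow X/X_0$ with $\bar Z:=\im(g)$, let $j\colon Z\hookrightarrow X$ be the preimage of $\bar Z$ along $X\to X/X_0$ (so $X_0\subseteq Z$ and $F(Z)\cong F(\bar Z)$, since $X_0\in\cB$), and let $u\colon Y\to F(Z)$ be the morphism induced by $Y_0'\twoheadrightarrow\bar Z$. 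Then $F(j)\circ u=i$ by construction; exactness of $F$ makes $F(j)$ a monomorphism, hence $u$ a monomorphism, while $u$ is an epimorphism because $Y_0'\to\bar Z$ is — so $u$ is an isomorphism, giving exactly the commutative triangle requested. The dual statement follows by running the same argument for $\cB^{\op}\subseteq\cA^{\op}$, using $(\cA/\cB)^{\op}=\cA^{\op}/\cB^{\op}$. For the last clause: if $X$ is simple and $F(X)\neq0$, any nonzero subobject of $F(X)$ lifts by the above to a monomorphism $Z\hookrightarrow X$ with $F(Z)\neq0$, so $Z\neq0$, so $Z\cong X$ by simplicity, whence the subobject equals $F(X)$.

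For Part \ref{it:liftsimple}, since $V$ is Noetherian and Artinian it has a composition series $0=V_0\subseteq V_1\subseteq\cdots\subseteq V_n=V$ with each $V_m/V_{m-1}$ simple; applying the exact functor $F$ produces a filtration of $F(V)$ whose graded pieces $F(V_m)/F(V_{m-1})\cong F(V_m/V_{m-1})$ are, by Part \ref{it:LongLiu}, simple or zero. As $F(V)$ is simple the filtration jumps exactly once, at some index $m$, so $F(V_{m-1})=0$ and $F(V_m)=F(V)$; then $W:=V_m/V_{m-1}$ is a simple subquotient of $V$ and $F(W)=F(V_m)/F(V_{m-1})$ is carried onto $F(V)$ by the natural isomorphisms $F(V_m)/F(V_{m-1})\iso F(V_m)\iso F(V)$. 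For Part \ref{it:liftsssq}, write the simple subquotient $Y$ of $F(X)$ as a quotient of a subobject $P\subseteq F(X)$; by Part \ref{it:LongLiu} lift $P$ to a monomorphism $Z\hookrightarrow X$ with $F(Z)\cong P$, then by the dual of Part \ref{it:LongLiu} lift $F(Z)\cong P\twoheadrightarrow Y$ to an epimorphism $Z\twoheadrightarrow Z'$ with $F(Z')\cong Y$. Now $Z'$ is Noetherian and Artinian since $\cA$ is, and $F(Z')\cong Y$ is simple, so Part \ref{it:liftsimple} gives a simple subquotient $W$ of $Z'$ with $F(W)$ naturally isomorphic to $Y$; finally, pulling the two subobjects of $Z'$ that define $W$ back along $Z\twoheadrightarrow Z'$ and using $Z\subseteq X$ exhibits $W$ as a subquotient of $X$.

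The step I expect to be the main obstacle is Part \ref{it:LongLiu}, more precisely the bookkeeping inside the calculus of fractions: one must check that the fraction representing $i$ may be chosen with its ``denominator'' an honest subobject inclusion, so that $\im(g)$ and the subobject $Z\subseteq X$ are well defined, and that the $u$ produced this way is genuinely compatible with $i$, so that the elementary fact ``monomorphism and epimorphism imply isomorphism'' applies to $u$. Everything afterwards is routine: exactness of $F$, manipulation of composition series, the naturality of the isomorphisms $F(W)\cong F(V)$, and the standard observation that a subquotient of a subquotient of $X$ is again a subquotient of $X$.
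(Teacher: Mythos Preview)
Your proposal is correct. Parts \ref{it:LongLiu} and \ref{it:liftsssq} match the paper's argument closely: for \ref{it:LongLiu} the paper also represents $i$ by a fraction (in the right-calculus-of-fractions form $Y\overset{f}{\leftarrow}M\overset{g}{\to}X$ with $F(f)$ invertible), passes to the image of $g$, and reads off $u$; your Gabriel-model version with $g\colon Y_0'\to X/X_0$ is an equivalent packaging. For \ref{it:liftsssq} both you and the paper lift the subquotient in two steps and then invoke \ref{it:liftsimple}.

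The genuine difference is in Part \ref{it:liftsimple}. The paper does not use a composition series: it takes a \emph{minimal} subobject $U\subseteq V$ with $V/U\in\cB$ (Artinian), then a \emph{maximal} subobject $U_0\subseteq U$ with $U_0\in\cB$ (Noetherian), sets $W=U/U_0$, and verifies simplicity of $W$ by a direct case analysis on an arbitrary subobject. Your route---apply $F$ to a composition series of $V$, use \ref{it:LongLiu} to see each graded piece becomes simple or zero, and observe that a simple object can have only one nontrivial jump---is shorter and makes the dependence on \ref{it:LongLiu} explicit. The paper's construction, on the other hand, produces a canonical $W$ (the ``top/bottom'' of $V$ relative to $\cB$) rather than one that depends on a choice of composition series; it also shows a bit more, namely that the chosen $U$ and $U_0$ realise $F(V)$ without appealing to Jordan--H\"older-type reasoning in $\cA/\cB$. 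Either argument is fine here.
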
\begin{proof}
\hfill	\begin{enumerate}
		\item By the construction in the proof of \cite[\href{https://stacks.math.columbia.edu/tag/02MS}{Tag 02MS}]{stacks-project} and the right calculus of fractions in \cite[\href{https://stacks.math.columbia.edu/tag/04VB}{Tag 04VB}]{stacks-project}, there is a diagram \begin{center}
			\begin{tikzcd}
				& M \arrow[rd, "g"] \arrow[ld, "f"] &   \\
				Y &                                   & X
			\end{tikzcd}
		\end{center} in $\cA$, such that $F(f)$ is an isomorphism and $F(g)=i\circ F(f)$ in $\cA/\cB$. Therefore, $F(g)$ is a monomorphism. Since $F$ is exact, one has $F(\ker(g))=\ker(F(g))=0$, so $\ker(g)\in \cB$. Let $q:M\to M/\ker(g)$ be the epimorphism in $\cA$, and let $j:M/\ker(g)\to X$ be the monomorphism in $\cA$ induced by $g$. Then $F(q)$ is an isomorphism in $\cA/\cB$. Set $u:Y\to F(M/\ker(g))$ to be the morphism $F(q)\circ F(f)^{-1}$ in $\cA/\cB$. Then $u$ is an isomorphism with the stated property.
		\item Let	$\cP$ be the family of subobjects $V'$ of $V$ in $\cA$ with $V/V'\in \cB$. Then $\cP$ is nonempty since $V\in P$. As $V$ is Artinian in $\cA$,  there is a minimal object $U\in \cP$. Moreover, the morphism $F(U)\to F(V)$ is an isomorphism in $\cA/\cB$.
		Let	$\cQ$ be the family of subobjects of $U\in \cA$ lying in $\cB$. Then $\cQ$ is nonempty since $0\in \cQ$. As $V$ is Noetherian in $\cA$, so is $U$. Thus, $\cQ$ has a maximal object $U_0$. Then $W:=U/U_0$ is a subquotient of $V\in\cA$ and the morphism $F(U)\to F(W)$ is an isomorphism in $\cA/\cB$. In particular, $W\neq0$ in $\cA$. 
		
		It remains to prove that	$W$ is simple in $\cA$. For this, let $U'\to W$ be a  subobject in $\cA$. Then there is a subobject $U''$ of $U$ in $\cA$ containing $U_0$ with  $U''/U_0=U'$. As $F(U'')$ is a subobject of a simple object $F(U)$ in $\cA/\cB$,  either the morphism $F(U'')\to F(U)$ is an isomorphism or $F(U'')=0$. If $F(U'')=0$, then $U''\in \cB$ and $U''\in\cQ$. Since $U_0$ is maximal in $\cQ$, one has $U_0=U''$, so $U'=0$. If $F(U'')\to F(U)$ is an isomorphism, then $U/U''\in \cB$. Since the sequence \[0\to U/U''\to V/U''\to V/U\to0\] is exact in $\cA$, and $\cB$ is closed under extensions,  one gets $V/U''\in \cB$ and $U''\in \cP$. Since $U$ is minimal in $\cP$, one has $U''=U$. The morphism $U'\to W$ is thus an isomorphism in $\cA$. 
		\item By Part \ref{it:LongLiu}, there is a subquotient $Z$ of $X$ in $\cA$ with $F(Z)$ naturally isomorphic to $Y$. Then $F(Z)$ is simple in $\cA/\cB$. By assumption, $Z$ is  Noetherian and Artinian in $\cA$. Thus from Part \ref{it:liftsimple}, there is a simple subquotient $W$ of $Z$ in $\cA$ with $F(W)$ naturally isomorphic to $F(Z)$ and to $Y$ in $\cA/\cB$. \end{enumerate}
\end{proof}

	\subsection*{Acknowledgments}
I am grateful to my advisor, Prof.~Anna Cadoret, for  listening to my oral reports on this work and pointing out a  flaw therein. Lemma \ref{lm:Bairecomp} \ref{it:hypersurface} and the proof of Lemma \ref{lm:repred} are due to her. I also benefited from her various constructive advice and multiple careful reading. Prof.~Peter Scholze kindly answered my question, and provided the proof of  Lemma \ref{lm:Scholze}. I appreciate the patience and detailed replies of Professors Owen Barrett, Marco Maculan and Will Sawin  to my questions on their respective work. I thank Emiliano Ambrosi for his hospitality during my visit to the Université de Strasbourg. During the preparation, I also received the help of Chenyu Bai, Arnaud Eteve, Arnab Kundu, Junbang Liu, Long Liu, Kai Mao, Keyao Peng, Mingchen Xia, Junsheng Zhang and Xiaoxiang Zhou. I thank Gabriel Ribeiro for noting  a mistake in a previous version. Hui Zhang helped me out multiple times with his admirable knowledge, especially in algebraic geometry. All remaining errors are mine.
\printbibliography
\end{document}